\theoremstyle{plain}
\theoremstyle{definition} 
\newtheorem{thm}{Theorem}[section]
\newtheorem{cor}[thm]{Corollary}
\newtheorem{lem}[thm]{Lemma}
\newtheorem{prop}[thm]{Proposition}
\theoremstyle{definition}
\newtheorem{defn}{Definition}[section]
\theoremstyle{remark}
\newtheorem{rem}{Remark}[section]
\newcommand{\be}{\begin{equation}}
\newcommand{\ee}{\end{equation}}
\newcommand{\bea}{\begin{eqnarray}}
\newcommand{\eea}{\end{eqnarray}}
\newcommand{\ben}{\begin{eqnarray*}}
	\newcommand{\een}{\end{eqnarray*}}
\newcommand{\bt}{\begin{split}}
	\newcommand{\et}{\end{split}}
\newcommand{\bet}{\begin{equation}}
\newcommand{\mc}{\mathbb{C}}
\newcommand{\mr}{\mathbb{R}}
\newcommand{\ra}{\rightarrow}
\newcommand{\beq}{\begin{equation*}}
\newcommand{\eeq}{\end{equation*}}
\newcommand{\dbar}{\bar{\partial}}
\newcommand{\pa}{\partial}
\newcommand{\bp}{\bar{\partial}}
\begin{document}

\title[positivity Riemannian]
{Positivity of Riemannian metric
and the existence theorem of $L^2$ estimates for $d$ operator}

\author[X.Zhang]{Xujun Zhang}
\address{Xujun Zhang:\ 
Institute of Mathematics,
Academy of Mathematics and Systems Science, 
Chinese Academy of Sciences\\ 
Beijing 100190, P. R. China }
\email{xujunzhang@amss.ac.cn}

\begin{abstract}
        For the first main result of this note,
	we give a new characterization of
	Nakano $q$-positivity for the
	Riemannian flat vector bundle 
	in the sense of $L^2 $ estimate 
	for the $d$ operator.
	We also give a new characterization of
	the positivity of the curvature operator 
	associated to the Riemannian metric on   
	manifold in a local version in the sense of $L^2$ estimate.
	Then we prove two results 
	parallel to 
	Liu-Yang-Zhou's answer to  
	Lempert's question about Hermitian       
	holomorphic vector bundle.
	We also study the boundary convexity of 
	relative compact domain  
	in certain Riemann manifold 
	with convexity restriction. 
\end{abstract}

\maketitle

\tableofcontents

\section{Introduction}

Inspired by Deng-Ning-Wang-Zhou's work (\cite{DNW19},\cite{DNWZ}), 
Deng-Zhang
found a new characterization for convex function and curvature positivity for Riemannian flat bundle over the domain,
which applied to give a new proof of the classical Prekopa theorem and the matrix-valued Prekopa theorem.
They also give a new characterization of the convexity of smooth bounded domains in $\mr^n$  in the terms $L^2$ estimate condition for the $d$-equation
and a characterization of the pseudoconvexity of smooth bounded domains in $\mc^n$  in the terms $L^2$ estimate condition for the $\dbar$-equation (\cite{Deng-Zhang}).
The main purpose of this note is
to generalize the results in \cite{Deng-Zhang} to the partial positivity condition.

First, we give a new characterization 
for the partiality positivity 
for the Riemannian flat bundle.

\begin{thm}\label{thm:cha nakano positive}
Assume $D$ is a domain in $\mr^n$, 
$E=D \times \mr^r$ is the trivial vector bundle over $D$,
$g: D \ra  \text{Sym}^{+}_{r}(\mr)$ is the $C^2$ smooth Riemannian metric on $E$.
If for any strictly convex function $\psi$ on $\mr^n$,
for any $d$-closed $E$-valued $q$-form with compact support
$f \in \wedge^q_c(D, E) \cap \text{ker }(d)$,
there exists a $(q-1)$-form $u \in L^2_{q-1} (D, E)$ satisfy $du=f$ and
$$
\int_{D} \langle u,u \rangle  _{g} e^{-\psi} d\lambda   \leq  \int_{D}\langle \text{Hess}_{\psi}^{-1}  f,f \rangle  _{g} e^{-\psi}d\lambda ,
$$
provided the right hand side is finite,
then $(E, g)$ is Nakano $q$-semipositive.
\end{thm}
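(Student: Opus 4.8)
The plan is to localize the problem to a single point $x_{0}\in D$ and derive a contradiction there, by combining the functional-analytic duality principle for the $L^{2}$ estimate with the Bochner-Kodaira identity for the $d$-operator on the weighted flat bundle $(E,g\,e^{-\psi})$ and a concentration argument driven by a Gaussian weight of large curvature centred at $x_{0}$. Since Nakano $q$-semipositivity of $(E,g)$ is a pointwise condition, fix $x_{0}\in D$ and, after a constant linear change of frame, assume $g(x_{0})=\mathrm{Id}_{r}$; write $\nabla^{0}$ for the flat connection of $E$ (componentwise differentiation in the trivialization) and $\Theta^{[q]}_{g}$ for the curvature operator of $(E,g)$ acting on $E$-valued $q$-forms, so that Nakano $q$-semipositivity is the nonnegativity $\Theta^{[q]}_{g}\ge 0$. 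Arguing by contradiction, suppose $\Theta^{[q]}_{g}(x_{0})$ is not positive semidefinite; then there is a constant-coefficient $E$-valued $q$-form $\alpha_{0}$ with $\langle\Theta^{[q]}_{g}(x_{0})\alpha_{0},\alpha_{0}\rangle_{g}<0$.

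Next I would convert the hypothesis into an a priori inequality. By the standard duality lemma (the $d$-analogue of the functional-analytic lemma of \cite{DNWZ}; cf.\ \cite{Deng-Zhang}), the solvability assumption of the theorem for a fixed strictly convex $\psi$ is equivalent to
\[
|\langle f,v\rangle_{\psi}|^{2}\le\Big(\int_{D}\langle\text{Hess}_{\psi}^{-1}f,f\rangle_{g}\,e^{-\psi}\,d\lambda\Big)\big(\|d^{*}_{\psi}v\|_{\psi}^{2}+\|dv\|_{\psi}^{2}\big)
\]
for every $d$-closed $f\in\wedge^{q}_{c}(D,E)$ with finite right-hand side and every smooth compactly supported $q$-form $v$, where $\langle\cdot,\cdot\rangle_{\psi}$ is the pairing of $L^{2}(D,g\,e^{-\psi}\,d\lambda)$ and $d^{*}_{\psi}$ the corresponding formal adjoint of $d$. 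The Bochner-Kodaira identity for $d$ on $(E,g\,e^{-\psi})$ rewrites the last factor, for such $v$, as $\|\nabla^{0}v\|_{\psi}^{2}+\langle(\Theta^{[q]}_{g}+B_{\psi})v,v\rangle_{\psi}$, where $B_{\psi}$ is the weight-curvature operator on $q$-forms --- the operator whose inverse gives the pointwise quantity $\langle\text{Hess}_{\psi}^{-1}\cdot,\cdot\rangle_{g}$ --- and where the gradient term is that of the flat connection $\nabla^{0}$, so that $\nabla^{0}v$ vanishes on any open set on which the coefficients of $v$ are constant. Now take $\psi=\psi_{N}(x)=\tfrac{N}{2}|x-x_{0}|^{2}$: it is strictly convex, $\text{Hess}_{\psi_{N}}\equiv N\,\mathrm{Id}$, and by $O(n)$-equivariance and Schur's lemma $B_{\psi_{N}}=c_{N}\,\mathrm{Id}$ on $\wedge^{q}\mr^{n}\otimes\mr^{r}$ for some constant $c_{N}>0$. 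Choose $v=d(\chi\,w)$, where $w$ is a linear $(q-1)$-form with $dw=\alpha_{0}$ (for instance $w=\tfrac{1}{q}\iota_{\mathcal{E}}\alpha_{0}$, $\mathcal{E}$ the radial vector field) and $\chi$ is a smooth cutoff equal to $1$ on a neighbourhood of $x_{0}$; then $v$ is smooth, compactly supported, $d$-exact, and $v\equiv\alpha_{0}$ near $x_{0}$. Inserting the admissible choice $f=v$ into the a priori inequality and using $\int_{D}\langle\text{Hess}_{\psi_{N}}^{-1}v,v\rangle_{g}e^{-\psi_{N}}d\lambda=c_{N}^{-1}\|v\|_{\psi_{N}}^{2}$ together with $B_{\psi_{N}}v=c_{N}v$, the two copies of $\|v\|_{\psi_{N}}^{4}$ cancel and the inequality collapses to
\[
0\le\|\nabla^{0}v\|_{\psi_{N}}^{2}+\int_{D}\langle\Theta^{[q]}_{g}v,v\rangle_{g}\,e^{-\psi_{N}}\,d\lambda .
\]

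Finally I would let $N\to\infty$. Since $v\equiv\alpha_{0}$ on a fixed neighbourhood of $x_{0}$, the form $\nabla^{0}v$ vanishes there and is supported in $\{\,|x-x_{0}|\ge\rho\,\}$ for some $\rho>0$, whence $\|\nabla^{0}v\|_{\psi_{N}}^{2}=O(e^{-\rho^{2}N/2})$; on the other hand, continuity of $\Theta^{[q]}_{g}$ at $x_{0}$ (valid since $g\in C^{2}$) together with Laplace's method gives $\int_{D}\langle\Theta^{[q]}_{g}v,v\rangle_{g}e^{-\psi_{N}}d\lambda=\big(\langle\Theta^{[q]}_{g}(x_{0})\alpha_{0},\alpha_{0}\rangle_{g}+o(1)\big)(2\pi/N)^{n/2}$. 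Dividing by $(2\pi/N)^{n/2}$ and letting $N\to\infty$ forces $\langle\Theta^{[q]}_{g}(x_{0})\alpha_{0},\alpha_{0}\rangle_{g}\ge 0$, contradicting the choice of $\alpha_{0}$; hence $\Theta^{[q]}_{g}\ge 0$ at every point of $D$, i.e.\ $(E,g)$ is Nakano $q$-semipositive.

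I expect the main obstacle to be the careful set-up of the two analytic inputs in this real, flat-bundle framework rather than the limiting argument. First, one must arrange the domains so that the duality lemma genuinely applies to the relevant pair of $d$-operators and so that admissible $f$ may be taken $d$-exact on $D$. Second, and more essentially, one must establish the Bochner-Kodaira identity for the flat $d$ on $(E,g\,e^{-\psi})$ in exactly the sharp form used above: (i) its gradient term must be that of the \emph{flat} connection $\nabla^{0}$ --- untwisted by $d\psi$ or by the metric connection form --- so that it vanishes on the neighbourhood of $x_{0}$ where $v$ is constant; unlike the holomorphic Kodaira-Nakano setting this cannot be secured by a normal-frame normalization, since a Riemannian flat bundle admits no change of frame annihilating the first jet of $g$ at $x_{0}$, so the limiting step genuinely relies on the $d$-operator carrying the untwisted flat gradient in its Bochner formula; and (ii) the operator $B_{\psi}$ in the curvature term must be precisely the one whose inverse defines the right-hand side of the $L^{2}$ estimate, as it is this matching that makes the cancellation producing the last display exact --- with an unmatched constant in place of $c_{N}$ the contradiction would collapse. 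Once these are in place, the choice of $\psi_{N}$, the construction of $v$, and the Gaussian asymptotics are routine, and the argument is parallel to Liu-Yang-Zhou's treatment of Lempert's question, with the $\dbar$-operator on a Hermitian holomorphic vector bundle replaced by the $d$-operator on a Riemannian flat bundle.
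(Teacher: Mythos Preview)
Your proposal is correct and follows essentially the same route as the paper's own proof: both combine the hypothesis with Cauchy--Schwarz and the Bochner identity (Proposition~\ref{prop:Bochenr-identity-boundary-term-bundle}) to reduce to the key inequality $0\le\|\nabla\alpha\|^{2}_{\psi}+\int_{D}\langle\Theta_{E}\alpha,\alpha\rangle_{g}e^{-\psi}d\lambda$, then plug in a cut-off of a constant-coefficient test form and a scaled quadratic weight centered at a hypothetical bad point to force a contradiction. The only cosmetic differences are that the paper takes $\alpha=\text{Hess}_{\psi}^{-1}f$ (equivalent to your $f=v$ since the Hessian is scalar) and uses the shifted weight $\psi_{s}=s(|x|^{2}-a^{2}/4)$ with a direct sign argument in place of your Laplace-method asymptotics; your identification of the two delicate points---that the gradient term in the Bochner formula is the flat one, and that $B_{\psi}$ exactly matches the operator whose inverse appears in the estimate---is precisely what makes the cancellation and localization go through in both versions.
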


The notations in Theorem \ref{thm:cha nakano positive} will be introduced in \S \ref{sec:L2-estimate-bundle}.
The Nakano $q$-positivity of the Riemannian vector bundle is parallel to the Nakano $q$-positivity for the Hermitian holomorphic vector bundle which Siu introduced in \cite{SYT-1982}. 
Namely, when $E$ is line bundle,
the strictly Nakano $q$-positive of $E$ indicate that any $q$ eigenvalues of the Hessian of the weighted function on $E$ has positive sum.

When $q=1$, 
Theorem \ref{thm:cha nakano positive} can be viewed as applications of  Theorem 1.1 in \cite{DNW19} on the Riemannian flat line bundle.
Using the same methods in \cite{DNW19},  Inayama got similar results for Nakano $q$-positivity of Hermitian holomorphic vector bundle (\cite{Inayama}).

In \cite{Lempert}, Lempert asked whether a $C^2$ Hermitian metric whose curvature dominates $0$ is Nakano semi-positive in the usual sense.
Following Deng-Ning-Wang-Zhou's work in \cite{DNWZ},
Liu-Yang-Zhou answer Lempert's question affirmatively (\cite{Liu-Yang-Zhou}).
As an application of Theorem \ref{thm:cha nakano positive},
we prove the following Riemannian analogs of Liu-Yang-Zhou's results.

\begin{thm}\label{thm:real-version-lempert}
For any domain $D$ in $\mr^n$, 
$E=D\times \mr^r$ is trivial vector bundle over $D$, 
$\{g_{j} \}$ is a sequence of $C^2$ smooth Nakano $q$-semipositive Riemannian metric on $E$,
assume $\{g_{j}\}$ increasingly converge to a $C^2$ smooth Riemannian metric $g$, i.e., $g_{i} \leq g_{j} \leq g$ for any $i \leq j$,
then $(E, g)$ is Nakano $q$-semipositive definite.
\end{thm}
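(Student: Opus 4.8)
The plan is to verify the $L^2$-estimate hypothesis of Theorem~\ref{thm:cha nakano positive} for the limit metric $g$, and then invoke that theorem to conclude. Fix a strictly convex $\psi$ on $\mr^n$ and a $d$-closed $f\in\wedge^q_c(D,E)\cap\ker d$ with $C:=\int_D\langle \mathrm{Hess}_\psi^{-1}f,f\rangle_g\,e^{-\psi}\,d\lambda<\infty$. Since each $g_j$ is Nakano $q$-semipositive and $\mathrm{Hess}_\psi>0$, the curvature operator of $(E,g_j)$ twisted by the weight $\psi$ is strictly positive in the Nakano $q$-sense, so the forward $L^2$-existence theorem for the $d$-operator on Riemannian flat bundles (the H\"ormander-type estimate underlying \S\ref{sec:L2-estimate-bundle}, cf.\ \cite{DNW19,Inayama}) furnishes $u_j\in L^2_{q-1}(D,E)$ with $du_j=f$ and
\[
\int_D\langle u_j,u_j\rangle_{g_j}e^{-\psi}\,d\lambda\ \le\ \int_D\langle \mathrm{Hess}_\psi^{-1}f,f\rangle_{g_j}e^{-\psi}\,d\lambda\ \le\ C .
\]
The last inequality holds because $g_j\le g$: writing $\mathrm{Hess}_\psi^{-1}$ as acting only on the form factor and $A:=\mathrm{Hess}_\psi^{-1/2}$, one has $\langle \mathrm{Hess}_\psi^{-1}f,f\rangle_{h}=\langle Af,Af\rangle_{h}$ for any metric $h$ on $E$, and $\langle Af,Af\rangle_{g_j}\le\langle Af,Af\rangle_{g}$ pointwise since $g_j\le g$.

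Next I would pass to the limit. For any fixed index $i$ and all $j\ge i$, monotonicity $g_i\le g_j$ gives $\int_D\langle u_j,u_j\rangle_{g_i}e^{-\psi}\,d\lambda\le C$, so $\{u_j\}_{j\ge i}$ is bounded in the Hilbert space $L^2_{q-1}(D,E)$ equipped with the $(g_i,\psi)$-weighted norm. A diagonal argument over $i\to\infty$ then produces a subsequence $u_{j_k}$ that converges weakly in each of these weighted spaces to a common $u\in L^2_{\mathrm{loc}}$ (the weak limits agree because the norms are mutually comparable on relatively compact subsets); since weak convergence passes the distributional identity $du_{j_k}=f$ to the limit, $du=f$.

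Finally I would estimate the limit with respect to $g$ itself. For each fixed $i$, weak lower semicontinuity of the $(g_i,\psi)$-norm gives $\int_D\langle u,u\rangle_{g_i}e^{-\psi}\,d\lambda\le\liminf_k\int_D\langle u_{j_k},u_{j_k}\rangle_{g_i}e^{-\psi}\,d\lambda\le C$. Since $g_i\uparrow g$ pointwise we have $\langle u,u\rangle_{g_i}\uparrow\langle u,u\rangle_g$, so monotone convergence yields $\int_D\langle u,u\rangle_g e^{-\psi}\,d\lambda=\lim_i\int_D\langle u,u\rangle_{g_i}e^{-\psi}\,d\lambda\le C$. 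Thus $(E,g)$ satisfies the hypothesis of Theorem~\ref{thm:cha nakano positive}, and that theorem gives that $(E,g)$ is Nakano $q$-semipositive.

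The argument is soft once the correct ingredient is available, and the only genuinely essential point is the two-sided use of the monotone sequence: $g_j\le g$ bounds the right-hand sides uniformly and later allows the passage to a $g$-estimate by monotone convergence, while $g_i\le g_j$ provides the uniform $(g_i,\psi)$-bounds that make weak compactness applicable. The main obstacle is therefore not internal to this theorem but lies in the tool it consumes, namely the precise forward $L^2$-existence estimate for Nakano $q$-semipositive Riemannian flat bundles against a strictly convex weight; if that result is only stated under an a priori regularity or finiteness assumption, one should check that it applies to the compactly supported datum $f$ here, if necessary by exhausting $D$ with relatively compact subdomains and passing to the limit.
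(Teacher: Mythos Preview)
Your proof is correct and follows essentially the same strategy as the paper: solve $du_j=f$ for each $g_j$ via the forward $L^2$-existence theorem, use $g_j\le g$ to bound the right-hand sides uniformly, extract a weak limit, and then invoke Theorem~\ref{thm:cha nakano positive}. In fact your handling of the weak-compactness step is more careful than the paper's: the paper simply asserts a uniform $L^2$ bound and applies Alaoglu together with $\|u\|_g^2\le\liminf_j\|u_j\|_{g_j}^2$ without explaining how to compare norms in varying Hilbert spaces, whereas your use of the monotonicity $g_i\le g_j$ to obtain bounds in a fixed $(g_i,\psi)$-space, followed by monotone convergence $g_i\uparrow g$, makes this passage rigorous.
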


In the proof of Theorem \ref{thm:real-version-lempert}, 
an important ingredient is the following existence theorem of $L^2$ estimate of $d$ operator on Nakano $q$-positive flat Riemannian vector bundle.

\begin{thm}
	\label{thm:L^2 estimate for vector bundle}
	Assume $D$ is a convex domain in $\mr^n$,
	$E=D\times \mr^{r} \ra D$ is the flat vector bundle over $D$, 
	let $g: \mr^n \ra \text{Sym}^{+}_{r}(\mr)$ be a Riemannian metric on $E$.
	Assume $(E,g)$ is Nakano $q$-semipositive,
	then for any $d$-closed $q$-form $f\in \wedge^q (\bar{D}, E)\cap \text{Ker}(d) \cap \text{Dom}(d^*)$ with 
	$$
	\int_{\mr^n}\langle \Theta_{E}^{-1}f, f\rangle_g d\lambda<+\infty,
	$$ 
the equation $du=f$ can be solved with $u\in L^2_{q-1}(D, E)$ satisfying the following estimate:
	$$
	\int_{D}|u|^2_g d \lambda \leq \int_{D}\langle \Theta_{E}^{-1}f, f\rangle_g d\lambda.
	$$
\end{thm}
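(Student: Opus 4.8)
The plan is to run H\"ormander's $L^2$-existence scheme for the operator $d$ on the flat bundle $(E,g)$ over the convex domain $D$, with the requisite curvature positivity supplied by the Riemannian analogue of the Bochner--Kodaira--Nakano identity in the spirit of \cite{Deng-Zhang}. Let $\vartheta$ denote the Hilbert-space adjoint of $d$ with respect to the $g$-weighted inner product $\langle\cdot,\cdot\rangle_g$ on $L^2_\bullet(D,E)$, and regard $d$, as usual, as a densely defined closed operator from $L^2_{q-1}(D,E)$ to $L^2_q(D,E)$ and from $L^2_q(D,E)$ to $L^2_{q+1}(D,E)$, the composition being zero. Writing $C:=\int_D\langle\Theta_E^{-1}f,f\rangle_g\,d\lambda$ (finite by hypothesis, with the usual convention that $\langle\Theta_E^{-1}f,f\rangle_g=+\infty$ at points where $f$ fails to lie in the range of the possibly degenerate $\Theta_E$), the standard duality lemma reduces the solvability of $du=f$ with $\int_D|u|_g^2\,d\lambda\le C$ to the a priori estimate
\[
|\langle f,\alpha\rangle_g|^2\ \le\ C\,\|\vartheta\alpha\|_g^2\qquad\text{for all }\alpha\in\ker d\cap\text{Dom}(\vartheta)\subseteq L^2_q(D,E),
\]
where we have used that $f$ is $d$-closed to discard the $\|d\alpha\|_g^2$-term (and that $N^\perp\subseteq\text{Dom}(\vartheta)$ for $N=\ker d$, so that it suffices to test against $\alpha\in N$). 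An alternative would be to pass to the tube domain $D+i\mathbb{R}^n\subset\mathbb{C}^n$, pseudoconvex precisely because $D$ is convex, and invoke the H\"ormander--Inayama $L^2$-estimate for $\bar\partial$ on $q$-forms with the Hermitian metric extending $g$ constantly in the imaginary directions; but the unboundedness there makes the bookkeeping for the integral bound delicate, so I prefer the direct route.

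The first technical point is a density statement: the forms smooth up to the boundary, $\wedge^q(\bar D,E)$, lying in $\ker d\cap\text{Dom}(\vartheta)$ are dense, for the graph norm $\alpha\mapsto(\|\alpha\|_g^2+\|\vartheta\alpha\|_g^2)^{1/2}$, in $\ker d\cap\text{Dom}(\vartheta)$. For $D$ bounded with smooth boundary this is the real counterpart of the classical Kohn density lemma; for a general convex $D$ one first exhausts $D$ by smooth bounded strictly convex subdomains $D_\nu\Subset D$ and then smooths, the convexity being exactly what allows the approximation to be carried out without creating unfavorable boundary contributions. This reduces the a priori estimate to $\alpha\in\wedge^q(\bar D,E)$.

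The heart of the matter is the Riemannian Bochner--Kodaira identity. For $\alpha\in\wedge^q(\bar D,E)$ one expects an identity of the form
\[
\|d\alpha\|_g^2+\|\vartheta\alpha\|_g^2\ =\ \|\nabla\alpha\|_g^2+\int_D\langle\mathcal{B}_g\,\alpha,\alpha\rangle_g\,d\lambda+\int_{\partial D}\mathrm{II}_{\partial D}(\alpha,\alpha)\,d\sigma,
\]
where $\nabla$ is the metric connection of $(E,g)$ over the flat base $D$, $\mathcal{B}_g$ is the contracted curvature operator on $q$-forms attached to the non-parallelism of $g$ — on the flat base this is governed entirely by the curvature operator $\Theta_E$ of \S\ref{sec:L2-estimate-bundle} — and $\mathrm{II}_{\partial D}$ is a quadratic form built from the second fundamental form of $\partial D$. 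Since $\partial D$ is convex, the boundary integrand is nonnegative; discarding it together with $\|\nabla\alpha\|_g^2\ge0$ leaves $\|d\alpha\|_g^2+\|\vartheta\alpha\|_g^2\ge\int_D\langle\mathcal{B}_g\alpha,\alpha\rangle_g\,d\lambda$. By the very definition of Nakano $q$-semipositivity of $(E,g)$ one has $\langle\mathcal{B}_g\alpha,\alpha\rangle_g\ge\langle\Theta_E\alpha,\alpha\rangle_g\ge0$ pointwise on $q$-forms, so $\|d\alpha\|_g^2+\|\vartheta\alpha\|_g^2\ge\int_D\langle\Theta_E\alpha,\alpha\rangle_g\,d\lambda$ for all $\alpha\in\wedge^q(\bar D,E)$, in particular for such $\alpha\in\ker d$.

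Finally, the pointwise Cauchy--Schwarz inequality $|\langle f,\alpha\rangle_g|^2\le\langle\Theta_E^{-1}f,f\rangle_g\,\langle\Theta_E\alpha,\alpha\rangle_g$ (trivially true where $\Theta_E$ degenerates, with the convention above) integrates to $|\langle f,\alpha\rangle_g|^2\le C\int_D\langle\Theta_E\alpha,\alpha\rangle_g\,d\lambda\le C\,\|\vartheta\alpha\|_g^2$ for $\alpha\in\wedge^q(\bar D,E)\cap\ker d$; combined with the density step this yields the a priori estimate for all $\alpha\in\ker d\cap\text{Dom}(\vartheta)$, and the duality lemma then produces $u\in L^2_{q-1}(D,E)$ with $du=f$ and $\int_D|u|_g^2\,d\lambda\le C$. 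I expect the main obstacle to be the interplay of the two places where the convexity of $D$ is indispensable: establishing the density of $\wedge^q(\bar D,E)$ in $\ker d\cap\text{Dom}(\vartheta)$ (especially when $D$ is unbounded or merely convex rather than smoothly bounded, where one must exhaust and pass to the limit carefully while keeping the constant $C$ intact), and showing that the boundary integrand in the Bochner--Kodaira identity is genuinely nonnegative, i.e. identifying it with the second fundamental form of $\partial D$ and exploiting convexity. A subsidiary algebraic point is the inequality $\langle\mathcal{B}_g\alpha,\alpha\rangle_g\ge\langle\Theta_E\alpha,\alpha\rangle_g$ on $q$-forms, which is exactly the linear-algebra content of Nakano $q$-positivity and which one checks by diagonalizing $\Theta_E$ at a point.
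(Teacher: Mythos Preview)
Your approach is essentially the paper's: the Bochner identity with boundary term (Proposition~\ref{prop:Bochenr-identity-boundary-term-bundle}), convexity of $D$ to make the boundary integrand $\langle\text{Hess}_\rho\alpha,\alpha\rangle_g$ nonnegative, pointwise Cauchy--Schwarz against $\Theta_E$, and then Hahn--Banach/Riesz to produce $u$. Two clarifications are worth making. First, in this flat-base setup the operator you call $\mathcal{B}_g$ \emph{is} $\Theta_E$ (this is the content of Lemma preceding Proposition~\ref{prop:Bochenr-identity-boundary-term-bundle}: $dd^*_g+d^*_gd=\Theta_E+T_g$, and $T_g$ contributes exactly $\|\nabla\alpha\|^2$ plus the boundary term), so there is no separate algebraic inequality $\langle\mathcal{B}_g\alpha,\alpha\rangle_g\ge\langle\Theta_E\alpha,\alpha\rangle_g$ to establish; Nakano $q$-semipositivity is nothing more than $\langle\Theta_E\alpha,\alpha\rangle_g\ge0$ on $q$-forms. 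Second, for general convex $D$ the paper sidesteps the graph-norm density issue you flag: rather than approximating $\alpha$ on all of $D$, it solves $du_c=f$ on each smoothly bounded sublevel set $D_c=\{\psi<c\}$ of a strictly convex exhaustion, with the uniform bound $\int_{D_c}|u_c|^2_g\le\int_D\langle\Theta_E^{-1}f,f\rangle_g$, and then extracts a weak limit via Alaoglu. This solve-then-limit route is more robust than attempting density on an unbounded or non-smooth $D$.
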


When $q=1$, $E$ is line bundle, Theorem \ref{thm:L^2 estimate for vector bundle} is due to the Brascamp-Lieb inequality (\cite{Bra-Lieb}).
We prove Theorem \ref{thm:L^2 estimate for vector bundle}
in \S \ref{sec:L2-estimate-bundle} based on H\"ormander's idea for $L^2$ estimate (\cite{Ber95},\cite{Hor65}).

Next, 
we study the positivity of the curvature operator of the Riemannian metric defined on the manifold.
For a Riemannian manifold $(M,g)$, any points $p \in M$,
the curvature operator $\mathscr{R}: \wedge^{2} T_{p}M \ra \wedge^{2} T_{p}M $ associated to the Riemannian metric $g$  is defined by:
\begin{equation}
\langle \mathscr{R}(X \wedge Y),Z\wedge W \rangle =\langle R(X,Y)W,Z \rangle,
\end{equation}
where $R$ is the Riemannian metric tensor.

It's well known that the positivity of the curvature 
operator $\mathscr{R}$ is equivalent to the positivity 
of the following differential operator: 
$$
\Re_q: \square-\triangle^{\wedge(T^{*}M)}_{0}: \wedge^{q} T^{*}_{p}M \ra \wedge^{q} T^{*}_{p}M , \forall 1 \leq q \leq n-1
$$
where $\square$ is the de-Rham Laplacian, $\triangle^{\wedge(T^{*}M)}_{0}$ is the Hodge Laplacian (\cite[Theorem 3.3]{Wu-Book-Bochner}).
We show that the local existence 
of $L^2$ existence of $d$ operator
induce 
the local semi-positivity 
of the curvature operator $\mathscr{R}$.

\begin{thm}\label{thm:main-results-manifold}
Let $(M,g)$ be a complete Riemannian manifold,
for any point $p \in M$,
if there exists a relative compact neighborhood $U$ of $x_0$,
for any smooth closed $1$-form $f \in \Omega^{1}_{c}(U)\cap \text{Ker}(d)$ with compact support, 
for any smooth function $\varphi$ defined on $\bar{U}$,
there exists a function $u \in L^2(U,\varphi)$ satisfy the equation $du=f$ 
with the estimate:
$$
\int_{U} | u|^2 e^{-\varphi} dV_{g} 
\leq 
\int_{U} \langle \text{Hess}^{-1}_{\varphi}f,f\rangle e^{-\varphi} dV_{g},
$$
provided the right hand side is finite,
then the curvature operator $\Re$ associated to the Riemannian metric $g$ is semipositive definitie at $p$.
\end{thm}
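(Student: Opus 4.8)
The plan is to follow the scheme of Theorem~\ref{thm:cha nakano positive} and of Deng--Zhang's characterization of convex Euclidean domains (\cite{Deng-Zhang}): transport the local $L^{2}$-solvability hypothesis into a pointwise curvature inequality at $p$ by a localization-and-concentration argument, with the weighted Bochner--Kodaira--Weitzenb\"ock identity of the de Rham complex playing the role that the twisted Bochner--Kodaira identity plays over a flat bundle. First I would shrink $U$ so that it is a small geodesic ball and fix normal coordinates $(x^{1},\dots,x^{n})$ centered at $p$; then $g_{ij}(p)=\delta_{ij}$, $\partial_{k}g_{ij}(p)=0$, and the second derivatives $\partial_{k}\partial_{\ell}g_{ij}(p)$ reproduce the curvature tensor at $p$, hence determine both the Ricci endomorphism and the curvature operator. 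In these coordinates $dV_{g}=\sqrt{\det g}\,d\lambda$, and $d^{*}_{\varphi}$ together with the pointwise inner products on forms are the usual Riemannian ones twisted by $e^{-\varphi}$.

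Next I would dualize the hypothesis. For a $d$-closed $f\in\Omega^{1}_{c}(U)$ with finite right-hand side, let $u$ be the solution provided by hypothesis; then for every $\alpha\in\Omega^{1}_{c}(U)$, integration by parts and Cauchy--Schwarz give
\[
\Bigl|\int_{U}\langle f,\alpha\rangle e^{-\varphi}dV_{g}\Bigr|=\Bigl|\int_{U}\langle u,d^{*}_{\varphi}\alpha\rangle e^{-\varphi}dV_{g}\Bigr|\le\Bigl(\int_{U}\langle\text{Hess}^{-1}_{\varphi}f,f\rangle e^{-\varphi}dV_{g}\Bigr)^{1/2}\Bigl(\int_{U}|d^{*}_{\varphi}\alpha|^{2}e^{-\varphi}dV_{g}\Bigr)^{1/2}.
\]
Since $f$ ranges over all $d$-closed compactly supported $1$-forms and $\varphi$ over all strictly geodesically convex weights, I would insert sharply concentrated data: for a fixed $\xi\in T_{p}^{*}M$, take the constant-coefficient (hence $d$-closed) $1$-form $\bar\xi=\sum\xi_{i}\,dx^{i}$, set $\alpha=\eta\,\bar\xi$ for a cutoff $\eta$ equal to $1$ on a ball $B_{\delta}\subset U$, take $f=\tau\,d(\eta\,\ell_{\xi})$ with $\ell_{\xi}(x)=\sum\xi_{i}x^{i}$, and let $\varphi_{\tau}$ be a strictly convex weight on $\bar U$ with a nondegenerate minimum at $p$ and $\text{Hess}_{\varphi_{\tau}}(p)=\tau\,\mathrm{Id}$, $\tau\to+\infty$. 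Then $e^{-\varphi_{\tau}}$ concentrates Gaussianly at $p$ at scale $\tau^{-1/2}$, the $d\eta$-contributions to every integral are exponentially small in $\tau$, and the two data integrals above both equal $\tau|\xi|^{2}V_{\tau}(1+o(1))$, where $V_{\tau}=\int_{U}e^{-\varphi_{\tau}}dV_{g}$.

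The core step is to feed this into the weighted Bochner--Kodaira--Weitzenb\"ock identity for $1$-forms,
\[
\int_{U}|d^{*}_{\varphi}\alpha|^{2}e^{-\varphi}dV_{g}=\int_{U}|\nabla\alpha|^{2}e^{-\varphi}dV_{g}+\int_{U}\bigl\langle(\text{Hess}_{\varphi}+\Re_{1})\,\alpha,\alpha\bigr\rangle e^{-\varphi}dV_{g}-\int_{U}|d\alpha|^{2}e^{-\varphi}dV_{g},
\]
where $\Re_{1}$ is the degree-one Weitzenb\"ock (curvature) endomorphism, i.e.\ the Ricci endomorphism. With the data above, $\int_{U}|d\alpha|^{2}e^{-\varphi_{\tau}}dV_{g}$ is exponentially small; $\int_{U}|\nabla\alpha|^{2}e^{-\varphi_{\tau}}dV_{g}=O(\tau^{-1}V_{\tau})$ because $\partial g(p)=0$ forces $\nabla\bar\xi=O(|x|)$; $\int_{U}\langle\text{Hess}_{\varphi_{\tau}}\alpha,\alpha\rangle e^{-\varphi_{\tau}}dV_{g}=\tau|\xi|^{2}V_{\tau}(1+o(1))$; and $\int_{U}\langle\Re_{1}\,\alpha,\alpha\rangle e^{-\varphi_{\tau}}dV_{g}=\bigl\langle(\Re_{1})_{p}\,\xi,\xi\bigr\rangle V_{\tau}(1+o(1))$. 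Substituting into the dualized inequality and cancelling the common factor $\tau|\xi|^{2}V_{\tau}$ against the right-hand data leaves, in the limit $\tau\to+\infty$, $\bigl\langle(\Re_{1})_{p}\,\xi,\xi\bigr\rangle\ge0$ for every $\xi\in T_{p}^{*}M$. Carrying out the analogous concentration argument with the corresponding degree-$q$ test data for each $1\le q\le n-1$ and invoking the equivalence recalled before the statement (\cite[Theorem~3.3]{Wu-Book-Bochner}) then yields $\bigl\langle(\Re_{q})_{p}\,\omega,\omega\bigr\rangle\ge0$ for all $q$, i.e.\ the semipositivity of the curvature operator $\Re$ at $p$.

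I expect the main difficulty to be this concentration step, i.e.\ the bookkeeping behind the error estimates above: checking that the extremal data $(f,\alpha,\varphi_{\tau})$ can be chosen \emph{admissibly} on the fixed domain $U$ while still concentrating Gaussianly at $p$, controlling the $d\eta$-contributions and the behavior of $\alpha$ and $d^{*}_{\varphi}\alpha$ near $\partial U$, and tracking the orders in $\tau$ so that the cancellation of the $\text{Hess}_{\varphi}$ term is exact (the relevant relative errors must be $o(\tau^{-1})$, not merely $o(1)$). This is the Riemannian counterpart of the analysis of the second-fundamental-form boundary term in the convex-domain case of \cite{Deng-Zhang}. A secondary point is the passage from the degree-one Weitzenb\"ock term to the full curvature operator: this requires running the estimate, in its natural $q$-form form, in each degree together with Wu's equivalence, the $q=1$ computation above being the prototype that already contains all the analytic input.
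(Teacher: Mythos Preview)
Your dualize--Bochner--concentrate scheme is exactly the paper's, but you are missing the one algebraic trick that makes the whole ``main difficulty'' you anticipate evaporate. Instead of choosing $f$ and $\alpha$ independently and then trying to match the two leading $\tau|\xi|^{2}V_{\tau}$ terms to relative order $o(\tau^{-1})$, the paper couples them by taking $f=\mathrm{Hess}_{\varphi}\alpha$ (equivalently $\alpha=\mathrm{Hess}_{\varphi}^{-1}f$). With this choice the left side $\langle\langle f,\alpha\rangle\rangle_{\varphi}$ of your dualized inequality \emph{equals} the first factor $\langle\langle\mathrm{Hess}_{\varphi}^{-1}f,f\rangle\rangle_{\varphi}$ on the right, so one cancels cleanly and the weighted Bochner identity leaves
\[
\int_{U}|\nabla\alpha|^{2}\,e^{-\varphi}\,dV_{g}\;+\;\int_{U}\langle\Re_{1}\alpha,\alpha\rangle\,e^{-\varphi}\,dV_{g}\ \ge\ 0
\]
for every $\alpha\in\Omega^{1}_{c}(U)$ and every admissible weight $\varphi$, with the $\mathrm{Hess}_{\varphi}$ term gone \emph{exactly}. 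No asymptotic cancellation is needed. The contradiction is then obtained just as you outline: take $\alpha$ to be a cutoff of the constant-coefficient form $\xi$ witnessing $\langle\Re_{1}\xi,\xi\rangle(p)<-c$, use the normal-coordinate weight $\varphi_{s}=s\bigl(\sum_{i}x_{i}^{2}-r_{0}^{2}/4\bigr)$, and send $s\to\infty$; the curvature integral over the inner ball dominates everything else.

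One further point: your ``secondary'' step of rerunning the argument with degree-$q$ test data is not available here and the paper does not attempt it. The hypothesis of the theorem only provides $L^{2}$ solvability for $1$-forms $f$, so there is no way to feed $q$-form data into the estimate for $q\ge2$. The paper's proof stays entirely in degree one: it assumes the curvature operator fails at $p$, produces a $1$-form $\xi$ with $\langle\Re_{1}\xi,\xi\rangle(p)<0$, and derives the contradiction from the displayed inequality above. Whatever one thinks of the passage from $\Re_{1}\ge0$ to the full curvature operator, that is the argument you are being asked to reproduce, and your plan to invoke higher-degree test data would not be supported by the hypothesis as stated.
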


The notations in Theorem \ref{thm:main-results-manifold} is introduced 
in \S \ref{sec:Properties of curvature operator Re}.
In the proof of Theorem \ref{thm:main-results-manifold},
we choose $U$ be the normal coordinate neighborhood,
since there exists a canonical 
smooth strictly exhaustive convex function 
(square of the length function of geodesic) 
defined on $U$,
we can apply the localization technique to produce the contraction. 
Thus, if the manifold admits a
smooth strictly exhaustive convex function globally,
Theorem \ref{thm:main-results-manifold} can be generalized to the following condition without any technically change.

\begin{cor}\label{cor:postive-curvature-operator-manifold}
Let $(M,g)$ be a complete homogeneous $1$-convex Riemannian manifold,
if for any smooth closed $1$-form $f \in \Omega^{1}_{c}(M)\cap \text{Ker}(d)$ with compact support, 
for any smooth function $\varphi$ defined on $\bar{M}$,
there exists a function $u \in L^2(M,\varphi)$ satisfying $du=f$ and the following estiamtes
$$
\int_{M} |u|^2 e^{-\varphi} dV_{g} 
\leq 
\int_{M} \langle \text{Hess}^{-1}_{\varphi}f,f\rangle e^{-\varphi} dV_{g},
$$
provided the right hand side is finite,
then the curvature operator $\Re$ associated to the Riemannian metric $g$ is semipositive definitie on M.
\end{cor}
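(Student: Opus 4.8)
The plan is to deduce Corollary~\ref{cor:postive-curvature-operator-manifold} directly from the proof of Theorem~\ref{thm:main-results-manifold} by replacing the local normal-coordinate neighborhood with the whole manifold $M$, which is legitimate precisely because the global convexity hypothesis supplies the one analytic tool that the local proof extracted from normal coordinates. Concretely, I would first recall the structure of the argument for Theorem~\ref{thm:main-results-manifold}: one fixes $p$, chooses a smooth strictly convex exhaustion function on the relevant domain, uses it together with the $L^2$-estimate hypothesis for $d$ and the localization/scaling technique to obtain, after dualizing, a pointwise inequality at $p$ of Bochner--Kodaira type, and then reads off from that inequality the semipositivity of $\mathscr{R}$ (equivalently of the operator $\Re_q$) at $p$ via \cite[Theorem 3.3]{Wu-Book-Bochner}. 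The point is that the \emph{only} feature of the normal-coordinate neighborhood that was used is the existence of a smooth strictly exhaustive convex function on it; everything else in the argument is intrinsic and does not see the coordinate chart.

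The key steps, then, are as follows. First, observe that a complete homogeneous $1$-convex Riemannian manifold admits, by definition of $1$-convexity, a smooth strictly exhaustive convex function $\rho$ defined on all of $M$ (this is exactly the global substitute for the squared geodesic-length function used locally). Second, fix an arbitrary point $p\in M$; we must show $\mathscr{R}$ is semipositive at $p$. By homogeneity the isometry group acts transitively, so it suffices to verify semipositivity at a single point, but in fact we do not even need this reduction — we simply run the localization argument of Theorem~\ref{thm:main-results-manifold} with $U$ replaced by $M$ and the local convex function replaced by $\rho$. Third, feed into that argument the global $L^2$-estimate hypothesis of the corollary (for compactly supported $d$-closed $1$-forms $f\in\Omega^1_c(M)\cap\mathrm{Ker}(d)$ and arbitrary smooth weights $\varphi$): the localization technique produces test forms supported in shrinking neighborhoods of $p$, applies the hypothesis with weights of the form $\varphi=N\rho$ or $\varphi$ a suitable convex bump modeled on $\rho$, dualizes the resulting solvability-with-estimate into a pointwise coercivity inequality for the relevant curvature term at $p$, and concludes semipositivity of $\Re_1$ hence of $\mathscr{R}$ at $p$. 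Fourth, since $p$ was arbitrary, $\mathscr{R}$ is semipositive on all of $M$.

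I expect the argument to require essentially no new ideas beyond Theorem~\ref{thm:main-results-manifold}; this is why the paper phrases it as a corollary "without any technically change." The only points that need care are bookkeeping points rather than genuine obstacles: (i) one must check that the convexity estimates used in the localization step — comparing $\mathrm{Hess}_\varphi$ against the local geometry near $p$ — go through with the abstract convex exhaustion $\rho$ in place of the squared geodesic distance, which is fine since near $p$ any smooth strictly convex function has positive-definite Hessian and one only needs lower bounds on finitely many eigenvalues on a small compact set; and (ii) one must ensure the weights $\varphi$ constructed in the localization remain smooth on $\bar M$ (or can be truncated to compact support) so that they are admissible in the corollary's hypothesis — again routine, since $\rho$ is a global smooth function and the bumps are built by composing $\rho$ with smooth convex functions of one variable. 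The homogeneity hypothesis, beyond guaranteeing completeness-type regularity, could in principle be used to reduce to one base point, but as noted the pointwise nature of the conclusion makes even that reduction unnecessary; I would mention it only to explain why the hypothesis is stated.
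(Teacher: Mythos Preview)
Your overall strategy matches the paper's: the corollary is obtained by rerunning the proof of Theorem~\ref{thm:main-results-manifold} with $M$ in place of the normal-coordinate ball, and the paper itself says this goes through ``without any technically change.''

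However, you misidentify the role of the homogeneity hypothesis. The localization step in the proof of Theorem~\ref{thm:main-results-manifold} does not merely use that the weight $\varphi$ has positive-definite Hessian near $p$; it uses that $p$ is the \emph{minimum} (nondegenerate critical point) of $\varphi$, so that the scaled weights $e^{-s\varphi}$ concentrate at $p$ as $s\to\infty$. That is exactly what makes the integrals over $U\setminus U_{r_0/2}$ die while those over $U_{r_0/2}$ survive. A single global strictly convex exhaustion $\rho$ on a $1$-convex manifold has one minimum, so running the argument with the fixed $\rho$ establishes semipositivity of $\mathscr{R}$ only at that one point. This is precisely where homogeneity enters: as the paper explains immediately after stating the corollary, ``the assumption of the homogeneous provided that there exists a smooth strictly convex exhaustive function such that for any point $p$ in $M$, $p$ is its nondegenerate critical point.'' Your alternative reading---transitivity of the isometry group reduces the conclusion to a single base point---is also correct and amounts to the same thing (pull $\rho$ back by an isometry sending its minimum to $p$). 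But either way the hypothesis is \emph{essential}, not optional: your claim that ``we do not even need this reduction'' and that the argument goes through at arbitrary $p$ with the fixed $\rho$ is the one genuine gap, and your point~(i) about Hessian lower bounds does not address it.
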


If a Riemannian manifold $M$ admits
a smooth strictly convex exhaustive function,
it's named as the $1$-convex Riemannian manifold 
(\cite{Harvey-Lawson-12},\cite{Harvey-Lawson-13}).
It was showed in \cite{Green-Wu-1976} that $1$-convex Riemannian
is diffeomorphic to Euclidean space.
The assumption of the homogeneous provided
that
there exists a 
smooth strictly convex 
exhaustive function 
such that for any point $p$ in $M$, 
$p$ is its nondegenerate critical point.
For the noncompact Riemannian manifold 
with positive sectional curvature,
Wu shows that the Busssman function of a geodesic ray is strictly
convex which is independent of the choice of the initial point (\cite{WUHH-acta}),
thus the restriction of homogeneous $1$-convex in 
Corollary \ref{cor:postive-curvature-operator-manifold}
and the rest of this note can be substituted
to noncompact Riemannian manifold with positive sectional curvature.

We can also prove an analogy result of Theorem \ref{thm:real-version-lempert} for the Riemannian metric on the Riemannian manifold.
 
\begin{thm}
\label{thm:deformation-Riemannian-metirc-manifold}
Assume $(M,g_t)$ is a complete homogeneous $1$-convex Riemannian manifold,
where $g_{t}$ is an increasing sequence of complete Riemannian metric with semi-positive definite the curvature operator $\mathscr{R}^{t}$ on $M$  
and converge to a Riemannian metric $g$,
then the curvature operator $\mathscr{R}$ associated to $g$ is semipositive definite on M.
\end{thm}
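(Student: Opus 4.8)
The plan is to reduce Theorem \ref{thm:deformation-Riemannian-metirc-manifold} to a local statement and then invoke the machinery already assembled, exactly in parallel with how Theorem \ref{thm:real-version-lempert} follows from Theorem \ref{thm:cha nakano positive}. Fix a point $p\in M$. Positivity of the curvature operator $\mathscr R$ at $p$ is a pointwise algebraic condition on $\wedge^2 T_pM$, equivalently (by the cited \cite[Theorem 3.3]{Wu-Book-Bochner}) the positivity of the operator $\Re_1$ on $\wedge^1 T_p^*M$. So it suffices to verify the hypothesis of Theorem \ref{thm:main-results-manifold} for the limit metric $g$ at $p$: namely, to exhibit a relatively compact neighborhood $U$ of $p$ on which, for every smooth closed compactly supported $1$-form $f$ and every smooth $\varphi$ on $\bar U$, the equation $du=f$ is solvable with the Hessian-weighted estimate. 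Since $M$ is homogeneous $1$-convex, $U$ can be chosen so that it carries a smooth strictly convex exhaustion (as in the proof of Theorem \ref{thm:main-results-manifold}); the point $p$ being a nondegenerate critical point of the global convex exhaustion ensures the localization technique applies uniformly over the sequence.

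The key step is a monotone-convergence argument for the $L^2$-estimate. For each $t$, the metric $g_t$ has semipositive curvature operator, so by Theorem \ref{thm:main-results-manifold}'s underlying existence result (the manifold analogue of Theorem \ref{thm:L^2 estimate for vector bundle}, applied on the $1$-convex $U$) the equation $du_t=f$ is solvable with
\[
\int_U |u_t|^2_{g_t}\, e^{-\varphi}\, dV_{g_t}\;\le\;\int_U \langle \mathrm{Hess}^{-1}_\varphi f, f\rangle_{g_t}\, e^{-\varphi}\, dV_{g_t}.
\]
Because $g_t\uparrow g$, the weight $e^{-\varphi}dV_{g_t}$ and the pointwise norms are comparable to those for $g$ on the relatively compact $\bar U$, uniformly in $t$; the right-hand side is dominated by a fixed finite quantity built from $g$ (here one uses that $\mathrm{Hess}_\varphi$ is $t$-independent as a bilinear form and that $g_t\le g$ forces $\langle \mathrm{Hess}^{-1}_\varphi f,f\rangle_{g_t}$ to be controlled). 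Hence the family $\{u_t\}$ is bounded in $L^2(U)$ with respect to a fixed metric, so a subsequence converges weakly to some $u$. Weak convergence preserves the equation $du=f$ (tested against smooth forms) and, by lower semicontinuity of the $L^2$-norm together with the convergence $g_t\to g$ of the integrands, passes the estimate to the limit:
\[
\int_U |u|^2_{g}\, e^{-\varphi}\, dV_{g}\;\le\;\int_U \langle \mathrm{Hess}^{-1}_\varphi f, f\rangle_{g}\, e^{-\varphi}\, dV_{g}.
\]
This is precisely the hypothesis of Theorem \ref{thm:main-results-manifold}, so $\mathscr R$ is semipositive at $p$; as $p$ was arbitrary, we are done.

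The main obstacle I anticipate is the interchange of limits needed to push the estimate through: one must ensure that as $t\to\infty$ the solutions $u_t$ do not escape to infinity in $L^2$ measured against a \emph{single} fixed metric, and that both sides of the inequality converge to the $g$-versions rather than to something larger on the left or smaller on the right. The monotonicity $g_t\le g$ is what makes this work — it gives $dV_{g_t}\le dV_g$ (up to the determinant factor, which is bounded on $\bar U$) and the comparability of fiber norms — but care is required because $|u_t|^2_{g_t}$ appears with the smaller metric while we want a bound on $|u_t|^2_{g}$. One routes around this by noting that on the relatively compact $\bar U$ the ratio of the two metrics is bounded below by a positive constant independent of large $t$ (since $g_t\to g$ uniformly on $\bar U$, being a monotone sequence of continuous metrics with continuous limit, by Dini's theorem), so a uniform $L^2(U,g)$-bound on $u_t$ does follow. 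The rest — weak compactness, lower semicontinuity, and passing $d u_t = f$ to the limit — is standard. Finally, one remarks, as the paragraph following Corollary \ref{cor:postive-curvature-operator-manifold} does, that the homogeneous $1$-convex hypothesis may be replaced by noncompactness together with positive sectional curvature, using \cite{WUHH-acta}.
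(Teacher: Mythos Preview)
Your proposal follows essentially the same strategy as the paper's proof: solve $du_t=f$ with the weighted $L^2$ estimate for each $g_t$ (the paper cites Theorem~7.1 of \cite{Ji-Liu-Yu-2014} for this existence step), extract a weakly convergent subsequence via Alaoglu, and then invoke the characterization theorem to conclude semipositivity. The only structural difference is that the paper works globally on $M$ and applies Corollary~\ref{cor:postive-curvature-operator-manifold} at the end, whereas you localize to a neighborhood $U$ of $p$ and apply Theorem~\ref{thm:main-results-manifold}; since Corollary~\ref{cor:postive-curvature-operator-manifold} is just the global version of Theorem~\ref{thm:main-results-manifold}, this is cosmetic. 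The paper also handles the metric comparison more directly than your Dini-type detour, simply writing $\langle\langle \mathrm{Hess}_\psi^{-1}f,f\rangle\rangle_{g_j}\le\langle\langle \mathrm{Hess}_\psi^{-1}f,f\rangle\rangle_g$ from the monotonicity hypothesis and then $\int|u|^2_g e^{-\psi}\le\liminf_j\int|u_j|^2_{g_j}e^{-\psi}$. One caveat: your parenthetical that ``$\mathrm{Hess}_\varphi$ is $t$-independent as a bilinear form'' is not literally correct, since the Hessian is built from the Levi--Civita connection of $g_t$; the paper, however, does not track this dependence either, so on this point your argument and the paper's are at the same level of rigor.
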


For the metric flow $\{g_t\}$ with properties
we defined in Theorem \ref{thm:deformation-Riemannian-metirc-manifold},
Hamilton shows the flow exists on any compact Riemannian manifold using 
nonlinear heat equation (\cite[Lemma 8.2]{Hamilton}).

Apply the same methods in \cite{Deng-Zhang},
we also show that the existence of the $L^2$ estimate indicates the boundary convexity of the domain in the Riemannian manifold.

\begin{thm}\label{thm:char-q-convex-manifold}
	Suppose $(M,g)$ is a $n$-dimensional complete homogeneous $1$-convex Riemannian manifold with semi-positive definite curvature operator $\Re$.
	Let $G$ be a relatively compact domain in $M$ with smooth boundary defining function $\tau: M \ra \mr$.
If for any closed $1$-form smooth up to the boundary $f \in \Omega^{q}(\bar{G})  \cap \text{Dom}(d^*) \cap \text{Ker}(d)$, 
for any strictly convex function $\psi$ on $M$,
there exists a solution $u\in L^2_{q-1}(G, \psi )$ satisfied  $du=f$
and
	$$
	\int_{G} |u|^2 e^{-\psi}d\lambda  \leq \int _{G} \langle  \text{Hess}^{-1}_{\psi} f, f \rangle e^{-\psi}d\lambda  ,
	$$	
provided the right hand side is finite,
then any $q$ eigenvalue of 
the restriction of 
the second fundamental form of 
$\pa G$ on the tangent space has nonnegative sum.
\end{thm}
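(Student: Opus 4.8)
The plan is to argue by contradiction, mimicking the boundary‑convexity argument of Deng–Zhang adapted to the Riemannian manifold setting. Suppose the second fundamental form of $\partial G$ has, at some boundary point $p_0$, a set of $q$ tangent directions whose sum of principal curvatures is strictly negative. Using the homogeneous $1$-convexity, fix a smooth strictly convex exhaustive function $\rho$ on $M$ having $p_0$ as a nondegenerate critical point, and work in normal coordinates centered at $p_0$ so that the canonical convex function (square of geodesic distance, say $\rho=\tfrac12 d(\cdot,p_0)^2$) is available as a localizing weight. The idea is to build, for small $\varepsilon>0$, a convex weight $\psi=\psi_\varepsilon$ of the form $\psi_\varepsilon = A\rho + \varepsilon^{-1}\chi(\tau)$ where $\chi$ is convex increasing and $\tau$ is the boundary defining function, so that $\operatorname{Hess}_{\psi_\varepsilon}$ is huge in the normal direction but only moderately large in the tangent directions responsible for the negative curvature sum; this is exactly the device that forces the right‑hand side of the hypothesized $L^2$ estimate to stay bounded while extracting geometric information.

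The core of the argument is then to choose a test $1$-form $f=f_\varepsilon$ supported near $p_0$, closed and smooth up to the boundary, built as $f_\varepsilon = \bar\partial$-type construction—here $d$ of a cutoff of $\chi(\tau)$—so that $f_\varepsilon$ concentrates on the boundary strip and, crucially, $\langle \operatorname{Hess}^{-1}_{\psi_\varepsilon} f_\varepsilon, f_\varepsilon\rangle$ is integrable with a bound independent of $\varepsilon$ (this is where the inverse Hessian in the normal direction, which is $O(\varepsilon)$, tames the otherwise singular $f_\varepsilon$). Feeding $f_\varepsilon$ into the hypothesis produces solutions $u_\varepsilon$ with $\int_G |u_\varepsilon|^2 e^{-\psi_\varepsilon}\,dV_g$ uniformly bounded. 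One then pairs $u_\varepsilon$ against a fixed smooth closed $(n-q+1)$-form (or its Hodge dual) concentrated near $p_0$, integrates by parts using $du_\varepsilon = f_\varepsilon$, and the boundary term that survives in the limit $\varepsilon\to 0$ is—up to positive constants—exactly the sum of the $q$ principal curvatures in the chosen tangent directions. Since the left side stays bounded and the pairing forces this boundary term to be nonnegative, we contradict its strict negativity.

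Concretely the steps are: (i) reduce to a normal‑coordinate neighborhood $U$ of $p_0$ and record the expansion of $\tau$, $dV_g$, and the metric to the order needed to see the second fundamental form as the relevant boundary term; (ii) construct the family $\psi_\varepsilon$ and verify strict convexity on $M$ (using strict convexity of $\rho$ and convexity of $\chi\circ\tau$, the latter valid because $\tau$ is a defining function and $\chi$ is convex increasing—here the semipositivity of $\Re$ is used to keep the Bochner/Hessian comparison under control, just as curvature positivity enters in Theorem \ref{thm:L^2 estimate for vector bundle}); (iii) construct $f_\varepsilon$ and estimate $\int_G \langle \operatorname{Hess}^{-1}_{\psi_\varepsilon} f_\varepsilon, f_\varepsilon\rangle e^{-\psi_\varepsilon}\,dV_g \le C$ uniformly; (iv) apply the hypothesis to get $u_\varepsilon$ with $\int_G |u_\varepsilon|^2 e^{-\psi_\varepsilon}\,dV_g\le C$; (v) integrate by parts against a suitable fixed closed form, pass to the limit, and identify the limiting boundary integral with the sum of $q$ principal curvatures; (vi) conclude.

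The main obstacle I expect is step (iii) together with the limiting identification in (v): one must design $f_\varepsilon$ so that the weighted $L^2$ norm of $\operatorname{Hess}^{-1}_{\psi_\varepsilon}f_\varepsilon$ is simultaneously finite \emph{and} uniformly bounded in $\varepsilon$, while still carrying enough mass near the boundary that the integration‑by‑parts boundary term converges to a nonzero multiple of the second fundamental form rather than vanishing. Getting these two competing requirements to coexist requires a delicate choice of the profile $\chi$ and of the scaling of the tangential support of $f_\varepsilon$ with $\varepsilon$; this is the analytic heart of the proof and is precisely the point where the $d$-equation (as opposed to $\bar\partial$) and the real Hessian (as opposed to the complex one) change the bookkeeping relative to \cite{Deng-Zhang}, though the structure of the argument is parallel.
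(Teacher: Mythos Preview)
Your proposal has a genuine gap at the very first analytic step. The weight $\psi_\varepsilon = A\rho + \varepsilon^{-1}\chi(\tau)$ is \emph{not} strictly convex on $M$: the chain rule gives $\text{Hess}(\chi\circ\tau)=\chi''(\tau)\,d\tau\otimes d\tau + \chi'(\tau)\,\text{Hess}_\tau$, and since you are assuming $\text{Hess}_\tau$ has a $q$-tuple of tangent eigenvalues with negative sum at $p_0$, the second term is negative in those directions. As $\varepsilon\to 0$ this term dominates $A\,\text{Hess}_\rho$, so $\psi_\varepsilon$ fails to be convex and the hypothesis of the theorem cannot be invoked for it. Neither the convex-increasing property of $\chi$ nor the semipositivity of the curvature operator $\Re$ rescues this: $\Re$ controls the Riemannian curvature of $M$, not the Hessian of the defining function of $G$. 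This is not a technicality you can fix by choosing $\chi$ more cleverly; any weight that blows up like $\varepsilon^{-1}$ along the level sets of $\tau$ will inherit the non-convexity of $\tau$ in the bad tangent directions.

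The paper's argument avoids this entirely. It keeps the weight purely of the form $\tilde\varphi_s = s\,\tilde\varphi$, where $\tilde\varphi$ is a fixed strictly convex function equal to $\sum x_i^2 - r_0$ in normal coordinates near $p_0$ (so $\text{Hess}_{\tilde\varphi}=2\,\mathrm{Id}$ there) and patched with the given convex exhaustion elsewhere; no dependence on $\tau$ enters the weight. The second fundamental form is extracted not by your step (v) but directly from the Bochner identity with boundary term (Lemma~\ref{lem:Bochner-indetity-d-operator-boundary-term-manifold}): one constructs via Lemma~\ref{lem:exist-d-1-form-boundary} a fixed $q$-form $f\in\Omega^q(\bar G)\cap\text{Dom}(d^*)\cap\text{Ker}(d)$ with $f(p_0)=\omega^1\wedge\cdots\wedge\omega^q$ aligned with the bad principal directions, then writes $\langle\langle f,f\rangle\rangle_{\tilde\varphi_s}^2 = \langle\langle d^*_{\tilde\varphi_s}f,u\rangle\rangle_{\tilde\varphi_s}^2 \le \|d^*_{\tilde\varphi_s}f\|^2\|u\|^2$ and bounds $\|d^*_{\tilde\varphi_s}f\|^2$ by the Bochner formula, which contains $\int_{\partial G}\langle\text{Hess}_\tau f,f\rangle e^{-\tilde\varphi_s}\,dS$ as an explicit boundary term. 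The contradiction then comes from a volume-versus-surface comparison: as $s\to\infty$ the interior terms over $G\cap\widetilde{U_{r_0}}$ are of order $\int_{\widetilde{U_{r_0}}}e^{-\tilde\varphi_s}$, while the (negative) boundary term is of order $r_0^{-1}\int_{\widetilde{U_{r_0}}}e^{-\tilde\varphi_s}$ by Fubini, so for $r_0$ small the boundary term wins. Your steps (iii)--(v), with $f_\varepsilon$ a $1$-form built from $\chi(\tau)$ and a separate pairing against an $(n-q+1)$-form, do not reproduce this mechanism and, as written, do not produce the correct degree $q$ test form either.
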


If
any $q$ eigenvalue of the restriction of the second fundamental form of $\pa G$ on the tangent space has nonnegative sum,
domain $G$ is named as $q$-convex domain in the sense of Harvey-Lawson (\cite{Harvey-Lawson-12},\cite{Harvey-Lawson-13}), 
Sha, Wu showed that such a domain has the homotopy type of a CW-complex with dimension at most $q-1$ (\cite{Sha-1986},\cite{WuHH-1987}).
Theorem \ref{thm:char-q-convex-manifold} is nature generalization 
of Theorem 1.7 in \cite{Deng-Zhang} with a new construction 
of the weighted function.
We plan to consider the boundary pseudoconvexity
of the domain in Stein manifold
in subsequent work.

The rest of this note is organized as follows.
In \S \ref{sec:L2-estimate-bundle},
we prove 
an existence results 
of $L^2$ estimate.
Theorem \ref{thm:cha nakano positive}
and Theorem \ref{thm:real-version-lempert}
will be proved in \S \ref{sec: Partial Positivity of Riemannian flat vector bundle}.
In \S \ref{sec:Properties of curvature operator Re},
we recall some 
basic properties of 
the curvature operator of the Riemannian metric on the Riemannian manifold. 
We proved Theorem \ref{thm:main-results-manifold}
and Theorem \ref{thm:deformation-Riemannian-metirc-manifold}
in \S \ref{sec:main-results-manifold}.
In \S \ref{sec:boundary-Riemannian-manifold}, 
we discuss the boundary convexity of 
relative compact domain in $1$-convex  
Riemannian manifold 
to prove 
Theorem \ref{thm:char-q-convex-manifold}.

$\mathbf{Acknowlegements.}$
The methods to the main results 
in the present note are strongly inspired 
by the $L^2$ technique developed by Deng-Ning-Wang-Zhou
in \cite{DNW19},  \cite{DNWZ}.
The author is grateful to professor Fusheng Deng, his
Ph.D. advisor, for his valuable suggestion on related topics.
The author is now supported by the Academy of Mathematics and Systems Science.

\section{Differential Operators on Riemannian vector bundle}\label{sec:L2-estimate-bundle}

Some of the calculation technique in this section is inspired by \cite{Ji-Liu-Yu-2014}

For a domain $D$ in $\mr^n$, 
let $\pa D$ denote the boundary of $D$.
We assume $\pa D$ is smooth in the following sense.
In a neighborhood $U$ of $\pa D$, 
there exists a smooth real-valued function $\rho:U \ra \mr$ such that for any $x\in \pa D$, 
we have $\rho (x)=0$
and $|\nabla \rho(x) | \neq 0$.
We shall assume $\rho <0$ in $D$ and $\rho > 0$ out side $\bar D$.

$E=D \times \mr^r \ra  \mr^n$ is the trivial vector bundle over $\mr^n$,
we denote by $\wedge^{q}(D,E)=\wedge^{q}T^*_D \otimes E=\mathscr{C}^{\infty}(\wedge^{q}T^*D \otimes E)$
the space of $q$-form defined on $D$ with values in $E$.

$\wedge^{q}_{c}(D,E)$ denotes the elements in $\wedge^{q}(D,E)$
which has compact support.
$\wedge^{q}(\bar{D},E)$ denotes the elements in $\wedge^{q}(D,E)$ smooth up the boundary of $D$.

Assume $g:D\ra \text{Sym}^{+}_{r}(\mr)$ is a $C^2$ smooth Riemannian metric on $E$,
where $\text{Sym}^{+}_{r}(\mr)$ is the space of symmetric matrix with oder $r$.
Let $\{e_1,...,e_r \}$ denote the frame on $E$ satisfy $\langle e_{\tau},e_\xi\rangle=g_{\tau\xi}$,
for any $\alpha, \beta \in \wedge^{q}(D,E)$,
we have 
$$
\alpha=\sum^{r}_{\tau=1}(\sum_{|I|=q}\alpha_{I,\tau} dx_I)\otimes e_\tau, 
\beta=\sum^{r}_{\tau=1}(\sum_{|I|=q}\beta_{I,\tau} dx_I)\otimes e_\tau.
$$
we always assume the muti-index is increasing.

Let's recall the definition of the linear connection and its curvature  over $\wedge^{q}T^*_D \otimes E, \forall 0\leq  q \leq n-1$

\begin{defn}[\cite{Demaily-Book}]
The linear connection $\nabla$ on $(E,g)$ is a linear differential operator of order $1$ acting on $\wedge^{q}T^*_D \otimes E$
and satisfy the following properties 
\begin{enumerate}
\item $\nabla:  \wedge^{q}T^*_D \otimes E \ra \wedge^{q+1}T^*_D \otimes E, \forall 0\leq  q \leq n-1$

\item $\nabla(f \wedge s)= d f \wedge s +(-1)^{p} f \wedge \nabla s$.
for any $f \in  \wedge^{q}T^*_D \otimes E$
\end{enumerate}

The linear map induces the following bilinear map (covariant derivative):
$$
\begin{aligned}
\nabla  :\mathscr{C}^{\infty}(TD) \times \mathscr{C}^{\infty}(\wedge^{q}T^*D \otimes E) & \ra \mathscr{C}^{\infty} (\wedge^{q}T^*D \otimes E)  \\
     (X, \alpha) &\ra \nabla_{X} \alpha.
\end{aligned}
$$
The \emph{curvature of the connection $\nabla$} is the map $\mathscr{R}$ defined by:
\beq
 \begin{aligned}
\mathscr{R}: \wedge^2 \mathscr{C}^{\infty}(TD) \times \mathscr{C}^{\infty}&(\wedge^{q}T^*D \otimes E)  \ra \mathscr{C}^{\infty} (\wedge^{q}T^*D \otimes E)  \\
\mathscr{R}(X,Y) \alpha&=\nabla_{X}\nabla_{Y} \alpha-\nabla_{Y}\nabla_{X} \alpha-\nabla_{[X,Y]}\alpha
\end{aligned}
\eeq

\end{defn}

\begin{rem}
For any $X\in \mathscr{C}^{\infty}(TD)$, it's clear that the covariant derivative $\nabla_X$ in the direction of $X$ is a linear differential operator of order $0$.
\end{rem}
\begin{rem}
With the notations above,
the exterior differential operator can be expressed by
$$
d=\sum^{n}_{i=1} dx_{i} \wedge \frac{\pa}{\pa x_i}=\sum^{n}_{i=1} 
dx_{i} \wedge \nabla_{\frac{\pa}{\pa x_i}} 
$$
For any $\alpha \in  \wedge^{q}T^*D \otimes E$,  
a simple calculation shows that:
$$
d^2 \alpha= \mathscr{R} \alpha
$$
the flatness of the vector bundle $E$ 
(in our cases: the trivial bundle $D \times \mr^r$) 
provided $\mathscr{R}=0$, 
thus we have 
$$
d^2 \alpha= 0.
$$
\end{rem}

Next, we introduce the following point-wise inner products and global inner product over $\wedge^{q} (D,E), \forall  0\leq q \leq n$:
$$
\begin{aligned}
	\langle \alpha, \beta \rangle_{g}
	&=\sum_{\tau,\xi}\left (\sum_{I}\alpha_{I,\tau} \cdot \beta_{I,\xi}\cdot g_{\tau \xi}\right) \\
	\langle \langle \alpha,\beta \rangle \rangle_{g}&=\int_{D} \langle \alpha, \beta \rangle_{g} d\lambda
\end{aligned}
$$
and the corresponding norm:
$$
|\alpha|^2_{g}=\langle \alpha, \alpha \rangle_{g} \;\; \text{and} \;\; \|\alpha \|^2_{g}=\langle \langle \alpha,\alpha \rangle \rangle_{g}
$$
then we will denote by $L^2_{q}(D,E)$ the completion of $\wedge^{q}(D,E)$ with respect to the above global inner product.
We use the same notation $d$ to denote the maximal (weak) differential operator $d: L^2_{q-1}(D,E) \ra L^2_{q}(D,E)$ for any $1 \leq q \leq n$.

We consider the following chain of Hilbert spaces
$$
L^2_{q-1}(D,E) \xrightarrow{d} L^2_{q}(D,E) \xrightarrow{d} L^2_{q+1}(D,E) 
$$
and study the equation 
$$
du=f
$$
for any $f\in L^2_{q}(D,E)$ with $df=0$.

Assume 
$$
	u=\sum^{r}_{\tau=1}(\sum_{|K|=q-1}u_{K,\tau} dx_K)\otimes e_\tau , 
f=\sum^{r}_{\tau=1}(\sum_{|J|=q}f_{J,\tau} dx_J)\otimes e_\tau 
$$
where $ dx_K=\frac{\pa}{\pa x_{j}}\lrcorner dx_J$ for each $j$,
then the identity $du=f$, in the sense of distribution, means that
$$
\int_{D} u_{K,\tau}\cdot \frac{\pa \alpha_{J,\xi}}{\pa x_j} d\lambda=-\int_{D} f_{J,\tau}\alpha_{J,\xi} d\lambda
$$
for any $\alpha=\sum^{r}_{\tau=1}(\sum_{|J|=q}\alpha_{J,\tau} dx_I)\otimes e_\tau \in \wedge^{q}_{c}(D,E)$.
Substitute $\alpha_{J,\tau}$ with $g_{\tau\xi} \cdot \alpha_{J,\tau}$,
we have 
$$
\langle \langle u,  \sum_{i,j,\tau,\xi, J}( g_{\tau\xi}^{-1}
\frac{\pa g_{\tau\xi}}{\pa x_j}\alpha_{J,\tau}+\frac{\pa \alpha_{J,\tau}}{\pa x_j})\rangle\rangle_{g}=\langle \langle du,\alpha  \rangle\rangle_{g}
$$
this means that the formal adjoint of the operator $d:L^2_{q-1}(D,E) \ra L^2_{q}(D,E)$ is given by 
$$
d^*_{g}\alpha=-\sum^{r}_{\tau=1}\left( \sum_{i,j,\tau,\xi,J}  (g_{\tau\xi}^{-1}
\frac{\pa g_{\tau\xi}}{\pa x_j}\alpha_{J,j}+\frac{\pa \alpha_{J,\tau}}{\pa x_j})dx_{K} \right) \otimes e_{\tau}.
$$

\begin{defn}
For any $C^2$ smooth Riemannian metric $g:D\ra \text{Sym}^{+}_{r}(\mr)$ on $E$,
the \emph{curvature operator} $\Theta_{E}$ associated to the metric $g$
is defined by:
$$
\Theta_{E}=-\sum^{n}_{i,j=1}
\left( \sum_{\tau,\xi} \frac{\pa}{\pa x_{i}}(g^{-1}_{\tau \xi}\frac{\pa g_{\tau \xi}}{\pa x_{j}}) \right) dx_{i}\wedge\frac{\pa }{\pa x_j} \lrcorner
:\wedge^{q}(D,E) \ra \wedge^{q}(D,E), \forall 1 \leq q \leq n 
$$
\end{defn}

\begin{lem}
\begin{equation}
dd^{*}_{g}+d^{*}_{g}d=\Theta_{E}+T_{g}
\end{equation}
where 
$$
T_{g}=-\sum^{n}_{j=1}  \left( 
g^{-1}\frac{\pa g}{\pa x_j} 
\frac{\pa }{\pa x_j}-\frac{\pa}{\pa x_{j}} \frac{\pa}{\pa x_{j}}
\right) : 
$$
\end{lem}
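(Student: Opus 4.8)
The plan is to verify the identity by a direct computation that exploits two features of the present setting: $E=D\times\mr^r$ is trivial, so the frame $\{e_\tau\}$ is parallel and each $\frac{\partial}{\partial x_i}$ acts componentwise; and $d=\sum_{i=1}^{n}dx_i\wedge\frac{\partial}{\partial x_i}$ has constant coefficients. The first step is to split the adjoint $d^*_g$ computed above as $d^*_g=\delta+A$, where
$$
\delta=-\sum_{j=1}^{n}\frac{\partial}{\partial x_j}\lrcorner\Big(\frac{\partial}{\partial x_j}\,\cdot\Big),\qquad
A=-\sum_{j=1}^{n}\Big(g^{-1}\frac{\partial g}{\partial x_j}\Big)\Big(\frac{\partial}{\partial x_j}\lrcorner\,\cdot\Big),
$$
so that $\delta$ is the formal adjoint of $d$ for the flat structure (Euclidean base metric, constant fibre metric) and $A$ is the zeroth-order $\mathrm{End}(E)$-valued correction recording the variation of $g$; the splitting is read directly off the displayed formula for $d^*_g$. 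Then $dd^*_g+d^*_gd=(d\delta+\delta d)+(dA+Ad)$, and I would treat the two brackets in turn.

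For the first bracket: since $d$ has constant coefficients, a one-line computation using the contraction identity $\frac{\partial}{\partial x_j}\lrcorner(dx_i\wedge\omega)=\delta_{ij}\,\omega-dx_i\wedge\big(\frac{\partial}{\partial x_j}\lrcorner\,\omega\big)$ and the commutativity of $\frac{\partial}{\partial x_i}$ with each $\frac{\partial}{\partial x_j}\lrcorner$ shows that $d\delta+\delta d$ acts componentwise as $-\sum_{j=1}^{n}\frac{\partial^2}{\partial x_j^2}$; this supplies the second-order part of $T_g$.

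For the second bracket I would expand $dA$ and $Ad$ with the graded Leibniz rule for $d$ and the same contraction identity; write $B_j:=g^{-1}\frac{\partial g}{\partial x_j}\in\mathrm{End}(E)$. Expanding $dA$ produces a coefficient-differentiation term $-\sum_{i,j}dx_i\wedge\big(\frac{\partial B_j}{\partial x_i}\big)\big(\frac{\partial}{\partial x_j}\lrcorner\,\cdot\big)$ plus a second-order term $-\sum_{i,j}dx_i\wedge B_j\big(\frac{\partial}{\partial x_j}\lrcorner\,\frac{\partial}{\partial x_i}\,\cdot\big)$; expanding $Ad$, and moving $\frac{\partial}{\partial x_j}\lrcorner$ past $dx_i\wedge$, produces the diagonal first-order term $-\sum_j B_j\frac{\partial}{\partial x_j}$ together with exactly the second-order term $+\sum_{i,j}dx_i\wedge B_j\big(\frac{\partial}{\partial x_j}\lrcorner\,\frac{\partial}{\partial x_i}\,\cdot\big)$ that cancels the one from $dA$ --- here one uses that $B_j$, acting on the fibre factor, commutes with $dx_i\wedge$ and with every $\frac{\partial}{\partial x_k}\lrcorner$. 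Hence $dA+Ad=\Theta_E-\sum_j B_j\frac{\partial}{\partial x_j}$, because $-\sum_{i,j}\frac{\partial}{\partial x_i}\big(g^{-1}\frac{\partial g}{\partial x_j}\big)\,dx_i\wedge\big(\frac{\partial}{\partial x_j}\lrcorner\,\cdot\big)$ is precisely $\Theta_E$. Adding the first bracket gives $dd^*_g+d^*_gd=\Theta_E+T_g$ with $T_g=-\sum_j\big(g^{-1}\frac{\partial g}{\partial x_j}\frac{\partial}{\partial x_j}+\frac{\partial^2}{\partial x_j^2}\big)$, which is the operator in the statement (there the second $\frac{\partial}{\partial x_j}$ is to be read as the formal adjoint of the first).

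The computation carries no conceptual difficulty; the only step that genuinely needs care is the cancellation of the second-order terms in $dA+Ad$, and this rests on exactly the two structural facts isolated at the outset: the coordinate fields have constant coefficients (so $\frac{\partial}{\partial x_i}$ commutes with the contractions) and the bundle frame is parallel (so $B_j$ is a bona fide $\mathrm{End}(E)$-valued function commuting with all form operations). What remains is bookkeeping --- keeping the non-commuting endomorphisms $g^{-1}\frac{\partial g}{\partial x_i}$ in the correct order, which is harmless here since they are never multiplied together, and tracking signs. A convenient consistency check is the scalar case $r=1$, $g=e^{-\psi}$: there $\Theta_E$ kills functions and $d^*_g d\,u=-\sum_j\frac{\partial^2 u}{\partial x_j^2}+\sum_j\frac{\partial\psi}{\partial x_j}\frac{\partial u}{\partial x_j}$ is the weighted Laplacian, in agreement with $T_g$.
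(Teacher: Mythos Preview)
Your argument is correct and follows essentially the same route as the paper: both proofs expand $dd^*_g$ and $d^*_gd$ directly using the contraction identity $\frac{\partial}{\partial x_j}\lrcorner(dx_i\wedge\,\cdot\,)=\delta_{ij}-dx_i\wedge\frac{\partial}{\partial x_j}\lrcorner$ and the commutation of $\frac{\partial}{\partial x_i}$ with the contraction operators; your only organizational difference is the preliminary splitting $d^*_g=\delta+A$, which makes the cancellation of the mixed second-order terms a bit more transparent but is otherwise the same bookkeeping.

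One remark on the sign you flag at the end: your computation yielding $T_g=-\sum_j\big(g^{-1}\frac{\partial g}{\partial x_j}\frac{\partial}{\partial x_j}+\frac{\partial^2}{\partial x_j^2}\big)$ is the correct one (the flat Hodge Laplacian is $-\sum_j\partial_j^2$), and the discrepancy with the displayed $T_g$ in the statement is a sign slip in the paper rather than in your work --- the paper's own expansion of $d^*_gd$ carries the same slip on the $\partial_j^2$ term. Your parenthetical reinterpretation is therefore unnecessary; you may simply note the typo.
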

\begin{proof}
$$
\begin{aligned}
dd^*_{g} &= -\sum_{i,j} dx_{i} \wedge \frac{\pa}{\pa x_{i}}(g^{-1}\frac{\pa g}{\pa x_j}\frac{\pa }{\pa x_j} \lrcorner +\frac{\pa }{\pa x_j} \lrcorner \frac{\pa }{\pa x_j}) \\
 &=-\sum \left( 
 dx_{i}\wedge \frac{\pa }{\pa x_{i}}(g^{-1}\frac{\pa g}{\pa x_j})\frac{\pa }{\pa x_j}\lrcorner 
 +g^{-1}\frac{\pa g}{\pa x_j} dx_i\wedge \frac{\pa }{\pa x_j} \lrcorner \frac{\pa }{\pa x_i}+dx_i \wedge \frac{\pa }{\pa x_j} \lrcorner \frac{\pa^2 }{\pa x_i \pa x_j}
 \right)
\end{aligned}
$$

$$
\begin{aligned}
d^*_{g} d &=-\sum 
(
g^{-1} \frac{\pa g}{\pa x_j} \frac{\pa }{\pa x_j}\lrcorner 
+\frac{\pa }{\pa x_j} \lrcorner \frac{\pa }{\pa x_j}
)
(dx_i \wedge \frac{\pa }{\pa x_i}) 
\\
 &= -\sum\left(g^{-1}\frac{\pa g}{\pa x_j} 
\frac{\pa }{\pa x_j}
-g^{-1}\frac{\pa g}{\pa x_j} dx_i\wedge \frac{\pa }{\pa x_j} \lrcorner \frac{\pa }{\pa x_i} -\frac{\pa^2 }{(\pa x_{j})^2} -dx_i \wedge \frac{\pa }{\pa x_j} \lrcorner \frac{\pa^2 }{\pa x_i \pa x_j}
 \right) 
\end{aligned}
$$
Then we have:
$$
\begin{aligned}
d^*_{g} d +dd^*_{g} &= \Theta_{E} 
-\sum^{n}_{j=1}  \left( 
g^{-1}\frac{\pa g}{\pa x_j} 
\frac{\pa }{\pa x_j}-\frac{\pa}{\pa x_{j}} \frac{\pa}{\pa x_{j}}
\right) \\
&=\Theta_{E}+T_{g}
\end{aligned}
$$
\end{proof}

\begin{rem}
For a smooth real-valued function $\varphi:\mr^n \ra \mr $ defined on $\mr^n$,
we denotes $\text{Hess}(\varphi)$ the Hessian matrix:
$$
\text{Hess}(\varphi)
=\left( \frac{\pa^2 \varphi}{\pa x_i \pa x_j} 
\right)_{1\leq i,j\leq n}.
$$

Similar to $\Theta_E$, 
we introduce 
the following linear operator 
defined on $\wedge^{q}T^{*}_{\mr^n}$:
\begin{equation}
	\text{Hess}_{\varphi} =\sum_{1\leq i,j \leq n} \varphi_{ij} dx_i \wedge \frac{\pa }{\pa x_{j}} \lrcorner.
\end{equation}
\end{rem}

We recall the definition of smooth $q$-convex function in the sense of Harvey and Lawson (we write it as $q$-convex function for simplicity).

\begin{defn}
	For any smooth function  $\varphi:\mr^n \ra \mr$ defined on $\mr^n$,
	we says that $\varphi$ is strictly  $q$-convex 
	(respectively $q$-convex) 
	if the sum of any $q$ eigenvalues of 
	$\text{Hess}(\varphi)$ is
	 positive (respectively semi-positive).
\end{defn}

\begin{rem}
It's clear that if $\varphi$ is $q$-convex,
the operator $\text{Hess}_{\varphi}$ 
is semipositive definite on $\wedge^{q}$.
\end{rem}

\begin{defn}[\cite{Harvey-Lawson-12},\cite{Harvey-Lawson-13}]
	A domain $D$ with smooth boundary is called $q$-convex domain if the boundary-defining function is $q$-convex.
\end{defn}

Similar to Lemma 7.2 in \cite{Ji-Liu-Yu-2014},
we have the following identity.

\begin{lem}
For any $u\in \wedge^{q-1}(\bar{D}, E),\alpha \in \wedge^{q}(\bar{D},E) \cap \text{Dom}(d^*_g)$,
we have:
\begin{equation}\label{eq:stokes-formula}
\langle \langle  du, \alpha  \rangle \rangle _g=
\langle \langle u, d^{*}_{g} \alpha  \rangle \rangle _g +\int_{\pa D} \langle u, \nabla \rho 
\lrcorner \alpha \rangle_{g} dS 
\end{equation}
\end{lem}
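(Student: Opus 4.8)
The plan is to reduce the identity \eqref{eq:stokes-formula} to a componentwise integration by parts in $\mr^n$, exactly as for the classical Stokes formula but keeping track of the metric $g$ on $E$. First I would expand both sides in the trivialization: write $u=\sum_{\tau,K}u_{K,\tau}\,dx_K\otimes e_\tau$ and $\alpha=\sum_{\xi,J}\alpha_{J,\xi}\,dx_J\otimes e_\xi$ with increasing multi-indices, and use the formula $d=\sum_{i=1}^n dx_i\wedge\frac{\pa}{\pa x_i}$ recorded above together with the pointwise inner product $\langle\cdot,\cdot\rangle_g$. Collecting terms, $\langle du,\alpha\rangle_g$ becomes a finite sum of scalar expressions $\varepsilon^{\,iK}_{J}\,\frac{\pa u_{K,\tau}}{\pa x_i}\,\alpha_{J,\xi}\,g_{\tau\xi}$, where $J=\{i\}\cup K$ and $\varepsilon^{\,iK}_{J}=\pm1$ is the sign produced by the wedge--contraction algebra; only the pairs with $i\notin K$ survive.

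Next I would apply the divergence theorem to each scalar term. Since $u\in\wedge^{q-1}(\bar D,E)$, $\alpha\in\wedge^{q}(\bar D,E)$ and $g\in C^2$ are smooth up to $\pa D$ and $\pa D$ is smooth, the elementary identity
\[
\int_{D}\frac{\pa v}{\pa x_i}\,w\,d\lambda
=-\int_{D}v\,\frac{\pa w}{\pa x_i}\,d\lambda+\int_{\pa D}v\,w\,\nu_i\,dS
\]
is valid, with $\nu=\nabla\rho/|\nabla\rho|$ the outward unit normal on $\pa D$. Summing over all indices and distributing the derivative $\frac{\pa}{\pa x_i}$ between $g_{\tau\xi}$ and $\alpha_{J,\xi}$, the interior contributions reassemble precisely into $\langle\langle u,d^{*}_g\alpha\rangle\rangle_g$ via the explicit expression for $d^{*}_g$ obtained earlier in this section; the hypothesis $\alpha\in\mathrm{Dom}(d^{*}_g)$ is exactly what makes this pairing finite. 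The boundary contributions collect into $\int_{\pa D}\big\langle u,\sum_i\nu_i\big(\tfrac{\pa}{\pa x_i}\lrcorner\alpha\big)\big\rangle_g\,dS$, and since $\sum_i\nu_i\tfrac{\pa}{\pa x_i}$ is a positive multiple of the gradient field $\nabla\rho$, this equals $\int_{\pa D}\langle u,\nabla\rho\lrcorner\alpha\rangle_g\,dS$ once the normalization of the defining function $\rho$ fixed in \S\ref{sec:L2-estimate-bundle} is taken into account (equivalently, the factor $|\nabla\rho|^{-1}$ is absorbed into $dS$).

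The only genuine work is bookkeeping: verifying the signs $\varepsilon^{\,iK}_{J}$ coming from the combinatorics of $dx_i\wedge(\,\cdot\,)$ versus $\frac{\pa}{\pa x_i}\lrcorner(\,\cdot\,)$, and checking that the orientation and normal conventions match those used to write down $d^{*}_g$. This is entirely parallel to Lemma~7.2 of \cite{Ji-Liu-Yu-2014}, with the Hermitian metric there replaced by the Riemannian metric $g$ on $E$; I do not expect any analytic difficulty beyond the smoothness-up-to-the-boundary already assumed in the statement, so the write-up should amount to a single careful index computation rather than a new idea.
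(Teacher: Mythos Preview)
Your proposal is correct and follows exactly the approach the paper intends: the paper does not give an independent proof but simply cites Lemma~7.2 of \cite{Ji-Liu-Yu-2014}, and your componentwise integration-by-parts argument is precisely that computation transplanted to the Riemannian setting. The only point to be careful about in the write-up is the normalization of $\rho$ (i.e.\ whether $|\nabla\rho|=1$ on $\pa D$), which you have already flagged.
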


The equation \eqref{eq:stokes-formula} shows that:
$$
\alpha \in \wedge^{q}(\bar{D},E) \cap \text{Dom}(d^*_g) \Longleftrightarrow \nabla \rho 
\lrcorner \alpha=0 \;\; on \;\; \pa D
$$

Now we are ready to prove the following identity.

\begin{prop}\label{prop:Bochenr-identity-boundary-term-bundle}
	Assume $D$ is a bounded domain in $\mr^n$ with smooth boundary,
	$E=D\times \mr^r \ra D$ is the flat vector bundle over $D$,
	$g:D \ra \text{Sym}^{+}_{r}(\mr)$ is a $C^2$ smooth Riemannian metric on $E$.
	Then for any $\alpha \in \wedge^{q}(D,E) \cap \text{Dom}(d^*)$, we have
	 \begin{equation}\label{eq:Bochner-identity-boundary-vector-bundle}
	 	\begin{aligned}
	\| d\alpha \| ^2_{g}
	+\| d^{*}_{g}\alpha \| ^2_{g}&=\| \nabla \alpha \|^2_{g} 
	+\int_{D}\langle  \Theta_{E}\alpha ,\alpha \rangle  _{g} d\lambda  +\int_{\pa D} \langle \text{Hess}_{\rho} \alpha,  \alpha \rangle_g dS	
	 	\end{aligned}
	 \end{equation}
\end{prop}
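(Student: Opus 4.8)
The plan is to establish the Bochner–Kodaira–Nakano type identity \eqref{eq:Bochner-identity-boundary-vector-bundle} by combining the commutator formula $dd^*_g+d^*_gd=\Theta_E+T_g$ of the previous lemma with an integration by parts that produces the boundary term. First I would start from the left-hand side and write, using the fact that $\alpha\in\text{Dom}(d^*)$ and Stokes' formula \eqref{eq:stokes-formula},
$$
\|d\alpha\|^2_g+\|d^*_g\alpha\|^2_g
=\langle\langle d^*_g d\alpha,\alpha\rangle\rangle_g+\langle\langle dd^*_g\alpha,\alpha\rangle\rangle_g+\int_{\pa D}\langle d^*_g\alpha,\nabla\rho\lrcorner\alpha\rangle_g\,dS.
$$
Since $\alpha\in\text{Dom}(d^*_g)$ means precisely $\nabla\rho\lrcorner\alpha=0$ on $\pa D$, the boundary contribution from the first pairing vanishes, and I am left with $\langle\langle(dd^*_g+d^*_gd)\alpha,\alpha\rangle\rangle_g=\langle\langle(\Theta_E+T_g)\alpha,\alpha\rangle\rangle_g$ plus whatever boundary terms arise when one of the two applications of $d$ or $d^*_g$ is moved across by Stokes. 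The first main step is therefore bookkeeping: carefully apply \eqref{eq:stokes-formula} to each of the four terms $\langle\langle d\alpha,d\alpha\rangle\rangle_g$ and $\langle\langle d^*_g\alpha,d^*_g\alpha\rangle\rangle_g$ so that the differential operators land on a single copy of $\alpha$, collecting the boundary integrals.

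The second step is to identify the "zeroth-order plus Laplacian" piece $\langle\langle T_g\alpha,\alpha\rangle\rangle_g$ with $\|\nabla\alpha\|^2_g$ modulo a boundary term. Recall $T_g=-\sum_j\big(g^{-1}\tfrac{\pa g}{\pa x_j}\tfrac{\pa}{\pa x_j}-\tfrac{\pa^2}{\pa x_j^2}\big)$. The key observation is that $-\tfrac{\pa^2}{\pa x_j^2}$ paired against $\alpha$ integrates by parts to $\big|\tfrac{\pa\alpha}{\pa x_j}\big|^2$ in the \emph{flat} inner product plus a boundary term $\int_{\pa D}$, while the extra first-order term $g^{-1}\tfrac{\pa g}{\pa x_j}\tfrac{\pa}{\pa x_j}$ is exactly what is needed to upgrade the flat derivative $\tfrac{\pa\alpha}{\pa x_j}$ to the covariant derivative $\nabla_{\pa/\pa x_j}\alpha$ measured in the metric $g$ — this is where the connection coefficients $\tfrac12 g^{-1}\tfrac{\pa g}{\pa x_j}$ enter. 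Concretely I would compute $|\nabla_{\pa/\pa x_j}\alpha|^2_g$ in coordinates, expand, and match it term by term against $-\langle T_g\alpha,\alpha\rangle_g$ after integration by parts; summing over $j$ gives $\|\nabla\alpha\|^2_g$. This computation is the analogue of Ji–Liu–Yu's Lemma 7.2 referenced just above, adapted to the Riemannian (rather than Hermitian holomorphic) setting.

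The third step is to collect all the boundary integrals — the ones coming from moving $d$ and $d^*_g$ across in step one, and the one from integrating $\tfrac{\pa^2}{\pa x_j^2}$ by parts in step two — and to show they sum to exactly $\int_{\pa D}\langle\text{Hess}_\rho\alpha,\alpha\rangle_g\,dS$. Here one uses repeatedly that $\nabla\rho\lrcorner\alpha=0$ on $\pa D$, so that many candidate boundary terms drop out, and what survives involves a single normal derivative of $\alpha$; differentiating the identity $\nabla\rho\lrcorner\alpha\equiv 0$ along $\pa D$ trades that normal derivative for the Hessian $\rho_{ij}$ contracted into $\alpha$ twice, giving $\text{Hess}_\rho$. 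I expect this reorganization of the boundary terms to be the main obstacle: it requires choosing a convenient local frame near $\pa D$ (for instance adapted so that $dx_n$ is normal), keeping careful track of signs and of which indices are being contracted, and using the tangential vanishing of $\nabla\rho\lrcorner\alpha$ to kill the unwanted pieces — the same delicate point that makes the Hermitian Bochner–Kodaira identity with boundary term (Morrey–Kohn–Hörmander) subtle. Once the three steps are assembled, \eqref{eq:Bochner-identity-boundary-vector-bundle} follows by equating the two expressions for $\|d\alpha\|^2_g+\|d^*_g\alpha\|^2_g$.
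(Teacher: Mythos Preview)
Your three-step plan is correct and is essentially the paper's argument: the paper reduces via the commutator lemma to proving $\langle\langle T_g\alpha,\alpha\rangle\rangle_g=\|\nabla\alpha\|^2_g+\int_{\pa D}\langle\text{Hess}_\rho\alpha,\alpha\rangle_g\,dS$, obtains the boundary contribution $\int_{\pa D}\langle\nabla_{\nabla\rho}\alpha,\alpha\rangle_g\,dS$ by applying the divergence theorem to the vector field $X=\sum_j\langle\alpha,\pa_j\alpha\rangle_g\,\pa_j$, and then converts this to the Hessian term via the Cartan-type identity \eqref{eq:Lie-derivative-vector-bundle} together with the Neumann condition $\nabla\rho\lrcorner\alpha=0$ on $\pa D$ --- exactly your step three. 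One small correction to step two: on this flat bundle the connection is trivial, so $\nabla_{\pa_j}\alpha=\pa_j\alpha$ and $\|\nabla\alpha\|^2_g=\sum_j\int_D\langle\pa_j\alpha,\pa_j\alpha\rangle_g\,d\lambda$; the piece $g^{-1}\tfrac{\pa g}{\pa x_j}\tfrac{\pa}{\pa x_j}$ in $T_g$ is not a connection coefficient but simply the term produced by differentiating the metric factor $g$ inside the $g$-weighted inner product when you integrate by parts.
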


\begin{proof}
We only need to verify the following identity:
$$
\langle \langle T_{g}\alpha, \alpha  \rangle \rangle_g =	\| \nabla \alpha \| ^2_{g}+\int_{\pa D} \langle \text{Hess}_{\rho}\alpha,  \alpha \rangle_g dS	
$$
Since 
$$
\begin{aligned}
T_g \alpha &=
-\sum^{n}_{j=1}  \left( 
g^{-1}\frac{\pa g}{\pa x_j} 
\frac{\pa \alpha }{\pa x_j}-\frac{\pa}{\pa x_{j}} \frac{\pa}{\pa x_{j}}\alpha \right)  \\
&=\sum^{n}_{j=1}  \delta_j \frac{\pa }{\pa x_j} \alpha 
\end{aligned}
$$
where:
$$
\delta_j =
-\sum^{n}_{j=1}  \left( 
g^{-1}\frac{\pa g}{\pa x_j} 
-\frac{\pa}{\pa x_{j}} \right) .
$$
Set
$$
X=\langle \alpha, \frac{\pa \alpha}{\pa x_j}  \rangle_g \frac{\pa }{\pa x_j}
$$
then we have:
$$
\begin{aligned}
\text{div} X &=\frac{\pa }{\pa x_j} \langle \alpha, \frac{\pa \alpha}{\pa x_j}  \rangle_g  \\
&=\sum_{I,\xi,\tau }  \left( 
\alpha_{I,\tau}\cdot g_{\tau \xi} \cdot (g^{-1}_{\tau \xi}\frac{\pa g_{\tau \xi}}{\pa x_j} 
\frac{\pa \alpha_{I,\xi} }{\pa x_j}-\frac{\pa \alpha_{I,\xi}}{\pa x_{j}} \frac{\pa}{\pa x_{j}} )+\frac{\pa \alpha_{I,\tau}}{\pa x_j} \cdot g_{\tau \xi} \cdot \frac{\pa \alpha_{I,\xi}}{\pa x_j}
\right)
  \\ 
&=-\langle \delta_j \frac{\pa }{\pa x_j} \alpha, \alpha    \rangle_{g} +\langle\frac{\pa }{\pa x_j} \alpha,\frac{\pa }{\pa x_j} \alpha \rangle _{g}
\end{aligned}
$$
then we have 
$$
\int_{D} \text{div} X =\int_{\pa D} \langle \nabla_{\nabla \rho} \alpha ,\alpha \rangle_{g} dS 
$$

Since $\alpha \in \wedge^{q}(\bar{D},E) \cap \text{Dom}(d^*_g) $
we have
$ \nabla \rho 
\lrcorner \alpha=0 \;\; on \;\; \pa D $,
thus 
$$
d( \nabla \rho 
\lrcorner \alpha)= (d \nabla \rho 
\lrcorner) \alpha+ \nabla \rho 
\lrcorner d \alpha=0 \;\;  on \;\; \pa D
$$
From
\begin{equation}\label{eq:Lie-derivative-vector-bundle}
(d \nabla \rho 
\lrcorner) \alpha+ \nabla \rho 
\lrcorner d \alpha- \nabla_{\nabla \rho}=\text{Hess}_{\rho}, 
\end{equation}
we have 
$$
\langle \langle T_{g}\alpha, \alpha  \rangle \rangle_g =	\| \nabla \alpha \| ^2_{g}+\int_{\pa D} \langle \text{Hess}_{\rho}\alpha,  \alpha \rangle_g dS	
$$
\end{proof}

\begin{rem}
The identity \eqref{eq:Lie-derivative-vector-bundle} is proved in \cite[Lemma 7.1]{Ji-Liu-Yu-2014}.
\end{rem}

\begin{defn}\label{defn:Nakano-q-positivity-vector-bundle}
Let $(E,g)$ be a Riemannian flat vector bundle over domain $D$ with $C^2$ smooth Riemannian metric $g$,
$\Theta_{E}$ is the curvature operator associated to the metric
$g$,
for any integral $1 \leq q \leq n$,
we say that $(E,g)$ is \emph{Nakano $q$-positive} (respectively \emph{semipositive}) if the inner product:
\be
\langle \Theta_{E} \alpha, \alpha  \rangle 
\ee
is positive (respectively \emph{non-negative}) definite at any point $x \in D$ fot any no-zero elements $\alpha \in \wedge^{q}(D,E)$.
\end{defn}

Now we are ready to prove the following $L^2$ estimate theorem for the $d$ operator.

\begin{thm}
Assume $D$ is a convex domain in $\mr^n$,
$E=D\times \mr^{r} \ra D$ is the flat vector bundle over $D$, 
let $g: \mr^n \ra \text{Sym}^{+}_{r}(\mr)$ be a Riemannian metric on $E$.
Assume $(E,g)$ is Nakano $q$-semipositive,
then for any $d$-closed $q$-form $f\in \wedge^q (\bar{D}, E)\cap \text{Ker}(d) \cap \text{Dom}(d^*)$ with 
$$
\int_{\mr^n}\langle \Theta_{E}^{-1}f, f\rangle_g d\lambda<+\infty,
$$ 
the equation $du=f$ can be solved with $u\in L^2_{q-1}(D, E)$ satisfying the following estimate:
$$
\int_{D}|u|^2_g d \lambda \leq \int_{D}\langle \Theta_{E}^{-1}f, f\rangle_g d\lambda.
$$
\end{thm}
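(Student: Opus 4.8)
The plan is to carry out H\"ormander's $L^{2}$ existence scheme. The whole problem reduces to establishing the \emph{a priori basic inequality}
$$
\int_{D}\langle \Theta_{E}\alpha,\alpha\rangle_{g}\,d\lambda\le\|d\alpha\|_{g}^{2}+\|d^{*}_{g}\alpha\|_{g}^{2},\qquad \alpha\in\text{Dom}(d)\cap\text{Dom}(d^{*}_{g}),
$$
after which the weighted Cauchy--Schwarz inequality together with the Riesz representation theorem produce a solution of $du=f$ with the required norm control. This basic inequality will be extracted from the Bochner-type identity \eqref{eq:Bochner-identity-boundary-vector-bundle} of Proposition \ref{prop:Bochenr-identity-boundary-term-bundle}, the point being to show that under the convexity hypothesis the boundary term is non-negative.

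First I would pass to bounded smooth domains. Exhaust $D$ by an increasing sequence of bounded convex domains $D_{j}$ with smooth boundary; each $D_{j}$ has a smooth convex boundary-defining function $\rho_{j}$, so $\text{Hess}(\rho_{j})\ge 0$ and hence the operator $\text{Hess}_{\rho_{j}}$ is semipositive definite on $\wedge^{q}$. Applying Proposition \ref{prop:Bochenr-identity-boundary-term-bundle} on $D_{j}$ and dropping the non-negative terms $\|\nabla\alpha\|_{g}^{2}$ and $\int_{\pa D_{j}}\langle\text{Hess}_{\rho_{j}}\alpha,\alpha\rangle_{g}\,dS$ gives
$$
\int_{D_{j}}\langle \Theta_{E}\alpha,\alpha\rangle_{g}\,d\lambda\le\|d\alpha\|_{g}^{2}+\|d^{*}_{g}\alpha\|_{g}^{2}
$$
for every $\alpha\in\wedge^{q}(\bar D_{j},E)\cap\text{Dom}(d^{*}_{g})$. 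A Friedrichs-type density argument (forms smooth up to $\pa D_{j}$ with $\nabla\rho_{j}\lrcorner\alpha=0$ on $\pa D_{j}$ are dense in $\text{Dom}(d)\cap\text{Dom}(d^{*}_{g})$ for the graph norm) then extends this inequality to all $\alpha\in\text{Dom}(d)\cap\text{Dom}(d^{*}_{g})$ over $D_{j}$.

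Next comes the functional-analytic step on each $D_{j}$. Split $L^{2}_{q}(D_{j},E)=\text{Ker}(d)\oplus\text{Ker}(d)^{\perp}$; since $\text{Im}(d)\subset\text{Ker}(d)$ one has $\text{Ker}(d)^{\perp}\subset\text{Ker}(d^{*}_{g})$, so writing $\alpha=\alpha_{1}+\alpha_{2}$ accordingly we get $d^{*}_{g}\alpha=d^{*}_{g}\alpha_{1}$, while $df=0$ gives $\langle\langle f,\alpha\rangle\rangle_{g}=\langle\langle f,\alpha_{1}\rangle\rangle_{g}$. Nakano $q$-semipositivity means $\Theta_{E}\ge 0$ pointwise, and finiteness of $\int_{D}\langle\Theta_{E}^{-1}f,f\rangle_{g}\,d\lambda$ forces $f$ to lie pointwise a.e.\ in the range of $\Theta_{E}$, where the pointwise Cauchy--Schwarz inequality $|\langle f,\alpha_{1}\rangle_{g}|^{2}\le\langle\Theta_{E}^{-1}f,f\rangle_{g}\langle\Theta_{E}\alpha_{1},\alpha_{1}\rangle_{g}$ holds; integrating and using $d\alpha_{1}=0$ in the basic inequality yields
$$
|\langle\langle f,\alpha\rangle\rangle_{g}|^{2}\le\Big(\int_{D_{j}}\langle\Theta_{E}^{-1}f,f\rangle_{g}\,d\lambda\Big)\|d^{*}_{g}\alpha\|_{g}^{2}\le\Big(\int_{D}\langle\Theta_{E}^{-1}f,f\rangle_{g}\,d\lambda\Big)\|d^{*}_{g}\alpha\|_{g}^{2}.
$$
By Hahn--Banach and Riesz there is $u_{j}\in L^{2}_{q-1}(D_{j},E)$ with $du_{j}=f|_{D_{j}}$ and $\|u_{j}\|_{g}^{2}\le\int_{D}\langle\Theta_{E}^{-1}f,f\rangle_{g}\,d\lambda$. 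Extending $u_{j}$ by $0$ to $D$, the uniform bound gives a subsequence converging weakly to some $u\in L^{2}_{q-1}(D,E)$ with $du=f$ in the distributional sense and $\|u\|_{g}^{2}\le\liminf_{j}\|u_{j}\|_{g}^{2}\le\int_{D}\langle\Theta_{E}^{-1}f,f\rangle_{g}\,d\lambda$, which is the claim.

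I expect the main obstacle to be the control of the boundary term in \eqref{eq:Bochner-identity-boundary-vector-bundle}: the scheme works precisely because convexity of $D$ lets one choose exhausting domains whose defining functions have semipositive Hessian, hence $\text{Hess}_{\rho_{j}}\ge 0$ on $q$-forms; combined with this, the Friedrichs density lemma—needed because the projection $\alpha_{1}$ is not a priori smooth up to the boundary nor classically in $\text{Dom}(d^{*}_{g})$—is the other technical heart of the argument. A secondary point is giving meaning to $\langle\Theta_{E}^{-1}f,f\rangle_{g}$ where the Nakano-semipositive operator $\Theta_{E}$ degenerates, which is exactly what the finiteness hypothesis on the right-hand side is designed to handle.
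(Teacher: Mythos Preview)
Your proposal is correct and follows essentially the same H\"ormander scheme as the paper: derive the basic inequality from the Bochner identity \eqref{eq:Bochner-identity-boundary-vector-bundle} using convexity to kill the boundary term, run the orthogonal decomposition plus pointwise Cauchy--Schwarz to bound the linear functional, apply Hahn--Banach, then pass to the full domain by exhausting with smooth convex subdomains and extracting a weak limit via Alaoglu. The only cosmetic differences are that the paper phrases the density step as ``completeness of $L^{2}_{q}$'' rather than invoking Friedrichs explicitly, and exhausts $D$ by sublevel sets of a strictly convex exhaustion function rather than by generic convex smooth subdomains.
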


\begin{proof}
	
Assume $D$ is a bounded domain with smooth boundary,
since $D$ is convex, 
by the Bochner-type identity \eqref{eq:Bochner-identity-boundary-vector-bundle},
we have 
\begin{equation}\label{eq:Bochner-identity-boundary-vector-bundle-L2}
\langle \langle \Theta_{E} \alpha, \alpha \rangle \rangle 
\leq \|  d^*_g \alpha\|^2_{g}+\| d\alpha \|^2_{g}
\end{equation}
holds for $\forall \alpha \in \wedge^{q}(D, E) \cap \text{Dom}(d^*).$
The completeness of the space $L^2_{q}(D, E)$ indicates the above inequality can be extended to arbitrary forms $\alpha$ such that $ \alpha , d \alpha , d^*_g \alpha \in L^2$.
Now we consider the following Hilbert space orthogonal decomposition:
$$
L^2_{q}(D,E)=\text{Ker}(d) \oplus \left(\text{Ker}(d) \right)^{\bot},
$$
notice that the space $\text{Ker}(d)$ is closed.

For any $v=v_1 +v_2 \in L^2_q$ where $v_1 \in \text{Ker}(d) $ and $v_2 \in  \left(\text{Ker}(d) \right)^{\bot}$
,
since $\left(\text{Ker}(d) \right)^{\bot} \subset \text{Ker}(d^*)$,
we have $d^* \alpha =0$,
and 
$$ 
\langle\langle f,v \rangle \rangle ^2_{g} =\langle\langle f,v_{1} \rangle \rangle^2_{g} \leq \left(\int_{D} \langle\Theta_{E}^{-1}f,f \rangle d\lambda \right)  \cdot \left(\int_{D} \langle\Theta_{E}v_1,v_1 \rangle d\lambda \right) ,
$$
from the inequality \eqref{eq:Bochner-identity-boundary-vector-bundle-L2}, we have 
$$
\int_{D} \langle\Theta_{E}v_1,v_1 \rangle d\lambda \leq \| d^*_g v_1\|^2_{g}=\|d^*_g v \|^2_{g}.
$$
Combining both inequalities,
we have 
$$
\langle\langle f,v \rangle \rangle ^2_{g}  \leq \left(\int_{D} \langle\Theta_{E}^{-1}f,f \rangle d\lambda \right) \cdot \|d^*_g v \|^2_{g}
$$
holds for any $v \in L^2$.
This shows that we have a well defined 
linear form:
$$
d^*v \ra \langle  \langle v ,f\rangle \rangle
$$
here $v \in  \text{Dom}(d^*) \cap L^2_{q}$. By the Hahn-Banach theorem, there exists an element $u \in L^2_{q-1}(D,E)$ with 
$$
\|u\|^2_g \leq \int_{D} \langle\Theta_{E}^{-1}f,f \rangle d\lambda
$$
such that $\langle\langle f,v \rangle \rangle _{g}=\langle\langle u,d^*_{g}v \rangle \rangle ^2_{g}$

For the general cases,
since $D$ admits a smooth strictly $q$-convex exhaustive function,
we have 
$$
D_{c}=\{x\in D; \psi(x) \leq c\} \subset \subset D  
$$
for all $c \in \mr$.

For each $c$, 
the equation $du=f$ has solution $u \in L^2_{q-1}(D,E)$  with 
$$
\int_{D_c}|u|^2_g d \lambda \leq \int_{D_c}\langle \Theta_{E}^{-1}f, f\rangle_g d\lambda.
$$
It's clear that the integral on the right hand side is monotonic increasing and bounded above:
$$
\int_{D_{c}} \langle \Theta_{E}^{-1} f,f \rangle_{g} d\lambda \leq \int_{D} \langle \Theta_{E}^{-1} f,f \rangle_{g} d\lambda, \forall c \in \mr.
$$
Thus $\{u_c\}$ is uniformly  bounded above in $L^2_{q}(D_c,E)$ for any $c \in \mr$,
by the Algaoglu's Theorem, 
there exists a subsequence $\{u_{c_k} \}$ of  $\{u_c\}$ converging weakly to $u$ in $L^2$.
We have $du=f$ and 
$$
\int_{D} |u|^{2}_{h} d\lambda \leq \underset{k \ra +\infty}{\lim \inf} \int_{D_{c_k}}  |u_{c_k}|^{2}_{h} d\lambda  \leq \int_{D} \langle \Theta_{E}^{-1} f,f \rangle_{g} d\lambda
$$
for $c_k \ra +\infty$.
\end{proof}

\section{Proof of Theorem \ref{thm:cha nakano positive} and Theorem \ref{thm:real-version-lempert}}
\label{sec: Partial Positivity of Riemannian flat vector bundle}

In this section we prove Theorem \ref{thm:cha nakano positive} and Theorem \ref{thm:real-version-lempert}, namely, we give a new characterization of the Nakano $q$-semipositivity of Riemannian vector bundle and 
we also show that any Riemannian metrics dominate to zero is Nakano $q$-semipositivity in the usual sense.

\begin{thm}(=Theorem \ref{thm:cha nakano positive})
Assume $D$ is a domain in $\mr^n$, 
$E=D \times \mr^r$ is the trivial vector bundle over $D$,
$g: D \ra  \text{Sym}^{+}_{r}(\mr)$ is the $C^2$ smooth Riemannian metric on $E$.
If for any strictly convex function $\psi$ on $\mr^n$ of the form
$$\psi(x)=a\|x-x_0\|^2-b,\ (\forall a,b>0, \forall x_0 \in \mr^n),$$
for any closed $E$-valued $q$-form with compact support
$f \in \wedge^q_c(D, E) \cap \text{Ker }(d)$,
there exists $u \in L^2_{q-1} (D, E)$ satisfy $du=f$ and
$$
\int_{D} \langle u,u \rangle  _{g} e^{-\psi} d\lambda   \leq  \int_{D}\langle \text{Hess}_{\psi}^{-1}  f,f \rangle  _{g} e^{-\psi}d\lambda ,
$$
provided the right hand side is finite,
then $(E, g)$ is Nakano $q$-semipositive.
\end{thm}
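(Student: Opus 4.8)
The plan is to argue by contradiction, exploiting the fact that the hypothesis is tested against the whole family of ``paraboloid'' weights $\psi_a(x)=a\|x-x_0\|^2-b$ $(a,b>0,\ x_0\in\mathbb{R}^n)$, which will serve as a localization device. For such a weight and forms $\alpha,\beta$ put $\langle\langle\alpha,\beta\rangle\rangle_{g,\psi}:=\int_D\langle\alpha,\beta\rangle_g\,e^{-\psi}\,d\lambda$ and $\|\alpha\|_{g,\psi}^2:=\langle\langle\alpha,\alpha\rangle\rangle_{g,\psi}$. Since $\text{Hess}(\psi_a)$ is $2a$ times the identity matrix, on $q$-forms $\text{Hess}_{\psi_a}=2aq\cdot\mathrm{id}_{\wedge^q}$, hence $\text{Hess}_{\psi_a}^{-1}=\tfrac1{2aq}\,\mathrm{id}$ and the right-hand side of the estimate in the statement is simply $\tfrac1{2aq}\|f\|_{g,\psi_a}^2$, which is finite for every $f\in\wedge^q_c(D,E)$; so for each $a>0$ the hypothesis supplies an $L^2$ estimate for $d$ with the ``optimal'' constant $\tfrac1{2aq}$.

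Assume $(E,g)$ is \emph{not} Nakano $q$-semipositive, so there are $x_0\in D$ and a nonzero $\alpha_0\in\wedge^qT^*_{x_0}D\otimes E_{x_0}$ with $\langle\Theta_E\alpha_0,\alpha_0\rangle_g(x_0)<0$. Extend $\alpha_0$ to an $E$-valued $q$-form $\hat\alpha_0$ with constant coefficients in the flat trivialization; by continuity of $g$ and $\Theta_E$ choose $c_0>0$ and $0<r_1<r_0$ with $B_{r_0}:=B(x_0,r_0)\subset\subset D$ and $\langle\Theta_E\hat\alpha_0,\hat\alpha_0\rangle_g\le-c_0$ on $B_{r_0}$. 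A constant-coefficient $q$-form ($q\ge1$) is $d$-exact with polynomial primitive, so write $\hat\alpha_0=d\hat\beta_0$, pick a cutoff $\chi\in C^\infty_c(B_{r_0})$ with $\chi\equiv1$ on $B_{r_1}$, and set $\zeta_a:=\chi\hat\alpha_0\in\wedge^q_c(D,E)$ and $f_a:=d(\chi\hat\beta_0)=\chi\hat\alpha_0+d\chi\wedge\hat\beta_0\in\wedge^q_c(D,E)\cap\text{Ker}(d)$. The crucial feature is that, $\hat\alpha_0$ being constant ($d\hat\alpha_0=0$, $\nabla\hat\alpha_0=0$), the forms $f_a-\zeta_a=d\chi\wedge\hat\beta_0$, $\;d\zeta_a=d\chi\wedge\hat\alpha_0$ and $\nabla\zeta_a=d\chi\otimes\hat\alpha_0$ are all supported in $\{d\chi\neq0\}\subseteq\{\|x-x_0\|\ge r_1\}$.

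Feeding $\psi_a$ and $f_a$ into the hypothesis gives $u_a\in L^2_{q-1}(D,E)$ with $du_a=f_a$ and $\|u_a\|_{g,\psi_a}^2\le\tfrac1{2aq}\|f_a\|_{g,\psi_a}^2$. Pairing $f_a=du_a$ with the compactly supported $\zeta_a$, integrating by parts against the formal adjoint $d^*_{g,\psi_a}$ of $d$ for the weighted inner product, and using Cauchy--Schwarz together with $\|u_a\|_{g,\psi_a}\le(2aq)^{-1/2}\|f_a\|_{g,\psi_a}$, one gets
\[
2aq\,\bigl|\langle\langle f_a,\zeta_a\rangle\rangle_{g,\psi_a}\bigr|^2\ \le\ \|f_a\|_{g,\psi_a}^2\,\|d^*_{g,\psi_a}\zeta_a\|_{g,\psi_a}^2 ,
\]
whereas the weighted analogue of the Bochner-type identity of Proposition~\ref{prop:Bochenr-identity-boundary-term-bundle} (with no boundary term, $\zeta_a$ being compactly supported) reads
\[
\|d^*_{g,\psi_a}\zeta_a\|_{g,\psi_a}^2=\|\nabla\zeta_a\|_{g,\psi_a}^2+2aq\,\|\zeta_a\|_{g,\psi_a}^2+\langle\langle\Theta_E\zeta_a,\zeta_a\rangle\rangle_{g,\psi_a}-\|d\zeta_a\|_{g,\psi_a}^2 .
\]
Since $|\hat\alpha_0|_g^2(x_0)>0$, elementary Gaussian estimates give $\|\zeta_a\|_{g,\psi_a}^2=\int_D\chi^2|\hat\alpha_0|_g^2e^{-\psi_a}d\lambda\ge c_1a^{-n/2}$ and $|\langle\langle\Theta_E\zeta_a,\zeta_a\rangle\rangle_{g,\psi_a}|\le c_2a^{-n/2}$, while $\|\nabla\zeta_a\|_{g,\psi_a}^2$, $\|d\zeta_a\|_{g,\psi_a}^2$, $\langle\langle f_a,\zeta_a\rangle\rangle_{g,\psi_a}-\|\zeta_a\|_{g,\psi_a}^2$ and $\|f_a\|_{g,\psi_a}^2-\|\zeta_a\|_{g,\psi_a}^2$ are integrals over $\{\|x-x_0\|\ge r_1\}$, hence $O(e^{-ar_1^2})$. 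Substituting the second display into the first and cancelling the common $2aq\|\zeta_a\|_{g,\psi_a}^2$ terms forces $\langle\langle\Theta_E\zeta_a,\zeta_a\rangle\rangle_{g,\psi_a}\ge-\varepsilon_a$, where $\varepsilon_a\to0$ faster than any power of $a$. But by the choice of $B_{r_0}$,
\[
\langle\langle\Theta_E\zeta_a,\zeta_a\rangle\rangle_{g,\psi_a}=\int_D\chi^2\,\langle\Theta_E\hat\alpha_0,\hat\alpha_0\rangle_g\,e^{-\psi_a}d\lambda\ \le\ -c_0\int_{B_{r_1}}e^{-\psi_a}d\lambda\ \le\ -c_3\,a^{-n/2}
\]
for $a$ large; comparing the two bounds and letting $a\to+\infty$ yields a contradiction, so $(E,g)$ is Nakano $q$-semipositive.

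The step I expect to be the main obstacle is the last one: the curvature term $\langle\langle\Theta_E\zeta_a,\zeta_a\rangle\rangle_{g,\psi_a}$ is only of polynomial size $a^{-n/2}$, so one must be careful that it is not drowned by the errors produced by the cutoff. This is exactly why the test data are manufactured from a \emph{constant-coefficient} form $\hat\alpha_0$ and its primitive: every differentiation that could create a non-negligible term lands on $\chi$ and therefore lives on the annulus $\{r_1\le\|x-x_0\|\le r_0\}$, away from the concentration point $x_0$, so against the Gaussian weight $e^{-a\|x-x_0\|^2}$ it is exponentially small. For $q=1$ this is the converse-to-Brascamp--Lieb mechanism underlying Theorem~\ref{thm:L^2 estimate for vector bundle}; for general $q$ it is the $d$-operator counterpart of the method of \cite{DNW19} and \cite{Inayama}.
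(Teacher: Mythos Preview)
Your argument is correct and follows essentially the same route as the paper's: a contradiction argument based on pairing the solution $u$ with a compactly supported test form built from a constant-coefficient extension of the bad vector $\alpha_0$, then using the weighted Bochner identity and letting the paraboloid weight concentrate at $x_0$. The only cosmetic difference is that the paper chooses the test form to be exactly $\alpha=\mathrm{Hess}_\psi^{-1}f$ (a scalar multiple of $f$ for these weights), which makes the $\mathrm{Hess}_\psi$ term cancel \emph{identically} and reduces the key inequality to $\int|\nabla\alpha|^2 e^{-\psi}+\int\langle\Theta_E\alpha,\alpha\rangle e^{-\psi}\ge 0$, sparing the asymptotic bookkeeping of the cross terms you carry out; your version with $\zeta_a=\chi\hat\alpha_0$ differs from this choice only on the annulus and therefore requires (and correctly performs) the extra $O(e^{-ar_1^2})$ accounting.
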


\begin{proof}
For any $\alpha\in \wedge^q_{c}(D, E)$, 
we have
$$
\begin{aligned}
 \langle  \langle \alpha ,f \rangle   \rangle_g^2 &= \langle  \langle \alpha ,du \rangle   \rangle_g^2=\langle   \langle   d^*_{g}\alpha ,u \rangle  \rangle_g^2 \\
&\leq \langle   \langle   d^*_{g}\alpha ,d^*_{g}\alpha \rangle  \rangle_g  \cdot  \langle   \langle   u ,u \rangle  \rangle_g  .
\end{aligned}
$$
Combing this with Proposition \ref{prop:Bochenr-identity-boundary-term-bundle}, 
we have
$$
\begin{aligned}
\langle   \langle  \alpha ,f \rangle  \rangle_g^2 
& \leq   \int_{D}\langle  \text{Hess}_{\psi}^{-1}f,f \rangle _{g} e^{-\psi} d\lambda   \\
& \times \left(\int_{D} | \nabla \alpha |^2 _{g}  e^{-\psi} d\lambda +\int_{D}  \langle  (\Theta_{E}+\text{Hess}_{\psi})\alpha,\alpha \rangle _{g} e^{-\psi} d\lambda -\int_{D}|d\alpha|^2e^{-\psi} d\lambda\right)\\
& \leq \int_{D}\langle  \text{Hess}_{\psi}^{-1}f,f \rangle _{g} e^{-\psi} d\lambda \\  
&\times\left(\int_{D} | \nabla \alpha |^2 _{g}  e^{-\psi} d\lambda +\int_{D}  \langle  (\Theta_{E}+\text{Hess}_{\psi})\alpha,\alpha \rangle _{g} e^{-\psi} d\lambda \right),
\end{aligned}
$$
Setting $\alpha=\text{Hess}_{\psi}^{-1}f,$ then we obtain the following inequality:
\begin{equation}\label{main-inequality-matrix}
\int_{D} | \nabla \alpha |^2 _{g}  e^{-\psi} d\lambda +\int_{D}  \langle  \Theta_{E} \alpha,\alpha \rangle _{g} e^{-\psi} d\lambda \geq 0
\end{equation}

Now we argue by contradiction. 
Suppose $(E, g)$ is not Nakano $q$-semipositive, 
then there exist $p_{0} \in D$, constants $a,c>0$,
 and an $E$-valued $q$-form with constant coefficients:
 $$ 
 \xi=\sum^{r}_{\tau=1}(\sum_{|J|=q}\xi_{J,\tau}dx_J) \otimes e_r \in \wedge^{q}(D,E)
 $$ 
 such that 
 $$
 \langle \Theta_{E} \xi, \xi  \rangle_g(x) <-c<0
 $$
 holds for all $x \in B(p_0, a) \subset D$,
 we assume $p_0=O=(0,..,0)$ for simplicity and write $B(O, a)$ as $B_a$.
 
 Take $\chi(x) \in C^{\infty}_{c}(B_a)$ such that $\chi(x)=1$ for all $x\in B_{\frac{a}{2}}$,
 set
 $$
 \nu(x)=\chi(x)\cdot \sum^{r}_{\tau=1}(\sum_{|K|=q-1}x_j\xi_{J,\tau}dx_K) \otimes e_r
 $$
 where $dx_j\wedge dx_K=dx_J$.
Take $f=dV_g \in \wedge^{q}_c(D,E)$,
we have $df=0$ and 
$$
f(x)=\sum^{r}_{\tau=1}(\sum_{|J|=q}\xi_{J,\tau}dx_J) \otimes e_r 
$$
for all $x\in B_{\frac{a}{2}}$.

For any $s>0$, we set $\psi_{s}(x)=s(|x|^2-\frac{a^2}{4})$,
then $\psi_s$ is a sequence of
smooth strictly convex function on $\mr^n$,
set 
$$
\alpha_s =\text{Hess}_{\psi_s}^{-1}f=\frac{q}{2s}f\in \wedge^{q}_{c}(D,E).
$$
Since $f$ has compact support, there exists a constant $C>0$ such that
$|\nabla \alpha_{s} (x) |^2 \leq \frac{C}{s}$ 
for all $x\in D, \; \forall s>0$ and $|\nabla \alpha_s(x) |=0$ for all $x\in B_{\frac{a}{2}}$ since the coefficients of $\xi$ is constant. 

Substitute $\alpha,\psi$ with  $\alpha_s, \psi_s$ in \eqref{main-inequality-matrix},
we have
$$
\begin{aligned}
& s^2\left( \int_{D} \langle \Theta_{E} \alpha_{s},\alpha_{s} \rangle _{g}e^{-\psi_{s}}d\lambda 
+\int_D |\nabla \alpha_s |^2 _{g} e^{-\psi_{s}}d\lambda\right)  \\
\leq 
&   -c \frac{q^2}{4} \int_{B_{\frac{a}{2}}} e^{-\psi_{s}} d\lambda
+s^2\int_{D\backslash B_{\frac{a}{2}}} \langle \Theta_E \alpha_s,\alpha_s \rangle _{g} e^{-\psi_{s}}d\lambda 
\\
&+C^2\int_{D\backslash B_{\frac{a}{2}} } e^{-\psi_{s}} d\lambda+\int_{B_{\frac{a}{2}}} 0d\lambda \\ 
\end{aligned}
$$
since $\underset{s \ra + \infty}{lim} \psi_{s} (x) =+\infty, \forall x \in D \backslash B_{\frac{a}{2}}$ and $\psi_s(x) \leq 0$ for all $x \in B_{\frac{a}{2}} $ and $\forall s >0$,
thus 
$$
\int_{D} | \nabla \alpha_s |^2 _{g}  e^{-\psi} d\lambda +\int_{D}  \langle  \Theta_{E} \alpha_s,\alpha_s \rangle _{g} e^{-\psi} d\lambda  <0
$$
for $s \ra +\infty$, 
this contradicts to \eqref{main-inequality-matrix},
thus $(E,g)$ is Nakano $q$-semipositive.
 \end{proof}

Next we prove Theorem \ref{thm:real-version-lempert},
first, we need to clarify the definition of the increasing metric.

\begin{defn}
	For any domain $D$ in $\mr^n$, 
$E=D\times \mr^r$ is trivial vector bundle over $D$, $\{g_{j} \}$ is a sequence of $C^2$ smooth
Riemannian metric,
we say that $\{g_{j}\}$ is increasing
if for any $f \in \wedge^{q}(D,E)$
we have 
$$
\| f \|^2_{g_i} \leq \| f \|^2_{g_j}
$$
for any $i \leq j$.
\end{defn}

\begin{thm}(=Theorem \ref{thm:real-version-lempert})
For any domain $D$ in $\mr^n$, 
$E=D\times \mr^r$ is trivial vector bundle over $D$, $\{g_{j} \}$ is a sequence of $C^2$ smooth Nakano $q$-semipositive Riemannian metric,
assume $\{g_{j}\}$ increasingly converge to a $C^2$ smooth Riemannian metric $g$, 
then $(E, g)$ is Nakano $q$-semipositive.
\end{thm}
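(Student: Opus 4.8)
The plan is to deduce the conclusion from Theorem \ref{thm:cha nakano positive}, so the whole task reduces to verifying the hypothesis of that theorem for the limit metric $g$: namely, given a strictly convex weight $\psi(x)=a\|x-x_0\|^2-b$ and a $d$-closed compactly supported $f\in\wedge^q_c(D,E)\cap\mathrm{Ker}(d)$ with $\int_D\langle\mathrm{Hess}_\psi^{-1}f,f\rangle_g e^{-\psi}\,d\lambda<+\infty$, I must produce $u\in L^2_{q-1}(D,E)$ with $du=f$ and $\int_D|u|^2_g e^{-\psi}\,d\lambda\le\int_D\langle\mathrm{Hess}_\psi^{-1}f,f\rangle_g e^{-\psi}\,d\lambda$. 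The first step is to note that the integrand $\langle\mathrm{Hess}_\psi^{-1}f,f\rangle_g$ is, pointwise, a decreasing limit of $\langle\mathrm{Hess}_\psi^{-1}f,f\rangle_{g_j}$ as $j\to\infty$ (since $g_j\le g$ forces the quadratic forms to increase, hence their values on the fixed tensor $\mathrm{Hess}_\psi^{-1}f$ increase to the $g$-value — here one uses that $\mathrm{Hess}_\psi^{-1}f$ does not depend on the bundle metric, only on $\psi$), so in particular $\int_D\langle\mathrm{Hess}_\psi^{-1}f,f\rangle_{g_j}e^{-\psi}\,d\lambda\le\int_D\langle\mathrm{Hess}_\psi^{-1}f,f\rangle_g e^{-\psi}\,d\lambda<+\infty$ for every $j$. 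Since each $(E,g_j)$ is Nakano $q$-semipositive, the $L^2$-estimate input — more precisely, the inequality \eqref{main-inequality-matrix} and the solvability it rests on, applied with metric $g_j$ — yields $u_j\in L^2_{q-1}(D,E)$ with $du_j=f$ and $\int_D|u_j|^2_{g_j}e^{-\psi}\,d\lambda\le\int_D\langle\mathrm{Hess}_\psi^{-1}f,f\rangle_{g_j}e^{-\psi}\,d\lambda$. (Strictly speaking one should check that the Nakano $q$-semipositive hypothesis on each $g_j$ does give solvability with this estimate; this is exactly the content underlying Theorem \ref{thm:cha nakano positive} run in reverse, or can be quoted from the $d$-equation machinery of \S\ref{sec:L2-estimate-bundle} together with the localization by the exhaustion $D_c$ used there, since $D$ need not be convex — one works on convex $D_c\Subset D$, or more simply one replaces $D$ by an exhausting sequence of convex subdomains on which $\psi$ is an exhaustion.)

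Next I would pass to the limit. Because $g_i\le g_j\le g$, we have $\int_D|u_j|^2_{g_1}e^{-\psi}\,d\lambda\le\int_D|u_j|^2_{g_j}e^{-\psi}\,d\lambda\le\int_D\langle\mathrm{Hess}_\psi^{-1}f,f\rangle_g e^{-\psi}\,d\lambda$, a bound uniform in $j$; so $\{u_j\}$ is bounded in the fixed Hilbert space $L^2_{q-1}(D,E)$ with weight $e^{-\psi}$ and metric $g_1$ (equivalently, in $L^2_{loc}$). By the Banach–Alaoglu theorem a subsequence $u_{j_k}$ converges weakly to some $u$ in that space; weak convergence passes $du=f$ to the limit in the distributional sense. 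For the norm estimate I fix an arbitrary index $m$: for $k$ with $j_k\ge m$ we have, by monotonicity, $\int_D|u_{j_k}|^2_{g_m}e^{-\psi}\,d\lambda\le\int_D|u_{j_k}|^2_{g_{j_k}}e^{-\psi}\,d\lambda\le\int_D\langle\mathrm{Hess}_\psi^{-1}f,f\rangle_{g_{j_k}}e^{-\psi}\,d\lambda\le\int_D\langle\mathrm{Hess}_\psi^{-1}f,f\rangle_g e^{-\psi}\,d\lambda$; since the left side is lower semicontinuous under weak $L^2(g_m,e^{-\psi})$-convergence, $\int_D|u|^2_{g_m}e^{-\psi}\,d\lambda\le\int_D\langle\mathrm{Hess}_\psi^{-1}f,f\rangle_g e^{-\psi}\,d\lambda$. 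Finally, letting $m\to\infty$ and invoking the monotone convergence theorem for $|u|^2_{g_m}\uparrow|u|^2_g$ gives $\int_D|u|^2_g e^{-\psi}\,d\lambda\le\int_D\langle\mathrm{Hess}_\psi^{-1}f,f\rangle_g e^{-\psi}\,d\lambda$, which is precisely the hypothesis of Theorem \ref{thm:cha nakano positive}. That theorem then delivers Nakano $q$-semipositivity of $(E,g)$.

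The main obstacle I anticipate is not the limiting argument, which is a standard weak-compactness plus lower-semicontinuity routine, but rather the bookkeeping needed to legitimately run the solvability-with-estimate step for each $g_j$ on the (possibly non-convex, merely open) domain $D$. The clean fix is to observe that Theorem \ref{thm:cha nakano positive} only tests against weights of the special form $\psi=a\|x-x_0\|^2-b$, which are strictly convex exhaustion functions on all of $\mr^n$; restricting to the convex sublevel sets $D_c=\{\psi<c\}\cap(\text{convex exhaustion of }D)$ and applying the Bochner identity and Hahn–Banach argument of \S\ref{sec:L2-estimate-bundle} verbatim on each $D_c$, then exhausting, produces $u_j$ on $D$ with the stated estimate. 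One must also be careful that the quantity $\langle\mathrm{Hess}_\psi^{-1}f,f\rangle_{g_j}$ is monotone in $j$ and that $\mathrm{Hess}_\psi$ is genuinely invertible (true since $\psi$ is strictly convex, so $\mathrm{Hess}_\psi=2a\,\mathrm{Id}$ acting on $\wedge^q$ is a positive multiple of the identity) — this also makes the formula $\alpha_s=\mathrm{Hess}_{\psi_s}^{-1}f$ used in the proof of Theorem \ref{thm:cha nakano positive} transparent and shows all integrability conditions are automatically satisfied for these weights.
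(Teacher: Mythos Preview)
Your proposal is correct and follows essentially the same approach as the paper: solve $du_j=f$ with the weighted $L^2$ estimate for each $g_j$ via Theorem~\ref{thm:L^2 estimate for vector bundle}, extract a weak limit by Alaoglu, and feed the resulting estimate for $g$ into Theorem~\ref{thm:cha nakano positive}. Your two-step passage to the limit (fix $g_m$, use lower semicontinuity, then let $m\to\infty$ by monotone convergence) and your discussion of solvability on a possibly non-convex $D$ are in fact more careful than the paper's own argument, which simply writes $\int_D|u|^2_g\,e^{-\psi}\le\liminf_j\int_D|u_j|^2_{g_j}\,e^{-\psi}$ and invokes Theorem~\ref{thm:L^2 estimate for vector bundle} without addressing convexity.
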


\begin{proof}
	By the assumption,
	we have 
	$$
	\langle \langle \text{Hess}_{\psi}^{-1}f,f \rangle\rangle_{g_{j}} \leq	\langle \langle \text{Hess}_{\psi}^{-1}f,f \rangle\rangle_{g}
	$$
holds for all $f\in L^2_{q}(D,E)\cap \text{Ker}(d)$ and any $j \in \mathbb{N}^*$, provided the right hand side is finite.

For each $ j$,
Theorem \ref{thm:L^2 estimate for vector bundle} shows that,
there exists $u_j \in L^2_{q}(D,E)$ such that $du_j=f$ with 
$$
 \int_{D} | u_{j}|^2_{g_j}e^{-\psi}   d\lambda
 \leq  
 \int_{D} \langle \text{Hess}_{\psi}^{-1} f,f \rangle_{g_j}  e^{-\psi} d\lambda 
 \leq \int_{D} \langle \text{Hess}_{\psi}^{-1}f,f \rangle_{g}e^{-\psi}   d\lambda,
$$
this means $\{u_j \}$ is uniformly bounded above in $L^2$, 
by Algaoglu's Theorem, 
there exists a subsequence $\{u_{c_j} \}$ of  $\{u_j\}$ converging weakly to $u$ in $L^2$ with $du=f$ and 
$$
\int_{D}  |u|^2_{g}  e^{-\psi} d\lambda \leq  \underset{ j \ra +\infty}{\lim \inf} \int_{D} | u_{j}|^2_{g^j}e^{-\psi}   d\lambda \leq \int_{D} \langle \text{Hess}_{\psi}^{-1} f,f \rangle_{g}  e^{-\psi} d\lambda,
$$
Theorem \ref{thm:cha nakano positive} shows that
	$(E,g)$ is Nakano $q$-semipositive.
\end{proof}

\section{Properties of curvature operator $\Re$}\label{sec:Properties of curvature operator Re}

Assume $M$ is an $n$-dimensional closed oriented manifold,
let $g^{TM}$ be the complete Riemannian metric on $TM$.
Let $\nabla^{TM}$ be the Levi-Civita connection 
associated to the metric $g^{TM}$,
it induce a canonical connection $\nabla^{\wedge(T^{*}M)}$ 
on $\wedge(T^{*}M)$.
Set
$$
R_{XY}=\nabla_X \nabla_Y-\nabla_Y\nabla_X-\nabla_{[X,Y]}
$$
be the Riemannian curvature tensor of 
the Levi-Civita connection.

Let $\{e_{1},\cdots,e_n\}$ be the 
oriented orthonormal basis of $TM$, 
and $\{\omega^{1},\cdots,\omega^n \}$ is 
the corresponding dual basis 
in $T^* M$ determined by $g^{TM}$.

For any $ 0\leq q\leq n$,
$\Omega^{q}(M)=\wedge^{q} (T^* M)$ 
denotes the space of smooth $q$-form on $M$.
$\Omega^{q}_{c}(M)$ are the elements in $\Omega^{q}(M)$
 with compact support.
We may express the $q$-form $\alpha$ and 
$\beta$ as:
$$
\begin{aligned}
	\alpha &=\sum_{i_1 <\cdots <i_q} \alpha _{i_1 \cdots i_q}  \omega^{i_1} \wedge \cdots \wedge \omega^{i_q}=\sum_{|I|=q} \alpha_{I} \omega^{I} \\
	\beta&=\sum_{i_1 <\cdots <i_q} \beta _{i_1 \cdots i_q}  \omega^{i_1} \wedge \cdots \wedge \omega^{i_q}=\sum_{|I|=q} \beta_{I} \omega^{I},
\end{aligned}
$$
smooth $0$-form is
 just smooth function defined on $M$.

It's well-known that 
the exterior differential operator $d$ and 
the codifferential operator $\delta$ satisfy:
$$
\begin{cases}
\displaystyle
d=\sum^{n}_{i=1} \omega^{i}\wedge \nabla^{\wedge(T^{*}M)}_{e_{i}} :
\Omega^{q}(M) \ra \Omega^{q+1}(M) 
\\
\displaystyle \delta=-\sum^{n}_{i=1} i_{e_{i}}\nabla^{\wedge(T^{*}M)}_{e_{i}}: 
\Omega^{q}(M) \ra \Omega^{q-1}(M) 
\end{cases},
$$
where $i_{e_{i}}$ denotes the interior product.

For any smooth function $\varphi \in C^{\infty}(M)$,
 we define the following point-wise inner product:
 \begin{equation}
 	\langle \alpha,\beta \rangle= 
	\sum_{i_1 <\cdots <i_q} \alpha _{i_1 \cdots i_q}\cdot \beta_{i_1 \cdots i_q},
	\end{equation}
the global weighted inner product:
\begin{equation}\label{eq:weighed-innder-product-global-manifold}
\langle \langle \alpha, \beta \rangle \rangle_{\varphi}
=\int_{M} \alpha \wedge * \beta e^{-\varphi}
=\int_{M} \langle \alpha,\beta \rangle  e^{-\varphi}dV_{g},
\end{equation}
and the corresponding norm:
$$
\begin{aligned}
 |\alpha|^2 &=\langle \alpha, \alpha\rangle, 
  \\
	\|\alpha \|^2_{\varphi}&=
\langle \langle \alpha, \alpha \rangle \rangle_{\varphi}.
\end{aligned}
$$

We set $L^2_{q}(M,\varphi)$ be 
the completion of the space $\Omega^{q}(M)$ with respect to the weighted inner product $\langle \langle \cdot, \cdot \rangle \rangle_{\varphi}$.
We use the same notation $d$ to denote 
the maximal (weak) differential operator 
$d: L^2_{q}(M,\varphi) \ra L^2_{q+1}(M,\varphi)$ for any $0 \leq q \leq n-1$,
let $d^{*}_{\varphi}$ 
denote the formal adjoint of $d$ with respect to 
the weighted inner product
 $\langle \langle \cdot, \cdot \rangle \rangle_{\varphi}$, 
simple calculations (\cite[Lemma 7.3]{Ji-Liu-Yu-2014}) shows that it satisfy:
$$
d^{*}_{\varphi}= e^{\varphi}\circ \delta \circ  e^{-\varphi}.
$$

\begin{defn}
The de-Rham Hodge Laplacian operator $\square$ associated to the Riemannian metric $g^{TM}$ is defined by:
$$
\square=d\delta+\delta d: \Omega^{q}(M) \ra \Omega^{q}(M),
$$
for any $0\leq q \leq n$.

The Bochner-Laplacian operator associated to the Riemannian metric $g^{TM}$ is defined by:
\begin{equation}
\triangle^{\wedge(T^{*}M)}_{0}=\sum^{n}_{i=1}\left( \nabla^{\wedge(T^{*}M)}_{\nabla^{TM}_{e_{i}}e_{i}}-\nabla^{\wedge(T^{*}M)}_{e_{i}}\nabla^{\wedge(T^{*}M)}_{e_{i}} \right):\Omega^{q}(M) \ra \Omega^{q}(M),
\end{equation}
for any $0\leq q \leq n$.
\end{defn}

\begin{rem}

\begin{itemize}
	\item[(1)] Both Laplace operators are 
	independent of the choice of the frame field.
	
\item[(2)] The comparison between
 the de-Rham Hodge Laplacian and the
Bochner-Laplacian 
induce the following zero-order differential operator
\begin{equation}
\Re_{q}:\square-\triangle^{\wedge(T^{*}M)}_{0}:
	\Omega^{q} \ra \Omega^q
\end{equation}
for any $1 \leq q \leq n$.
It's clear $\Re_q=0$ when $g$ is flat.
\end{itemize}
\end{rem}

\begin{defn}
	Let $\varphi$ be a function which
	is $C^2$ in a neighborhood of a point $x_0 \in M$,
	let $V,W$ be $C^{\infty}$ vector fields defined in a neighborhood of $x_0$,
	then $\text{Hess}(\varphi)$ is defined by:
	$$
	\text{Hess}(\varphi)(e_i,e_j)=
	\varphi_{ij}=
	e_i(e_j\varphi)-(\nabla^{TM} _{e_i}e_j) \varphi.
	$$
	and
	$$
	\text{Hess}_{\varphi}=\sum^{n}_{i,j=1} 
	\varphi_{ij} i_{e_{i}} \omega^{j} \wedge  :
	 \Omega^{q} \ra \Omega^{q}.
	$$
\end{defn}

For any relative compact domain $G$ in $M$ with smooth boundary defining function $\tau:M \ra \mr$,
we denote by $\Omega^{q}(G)$ the space of smooth $q$-form defined on $G$.
$\Omega_{c}^{q}(G)$ is the subspace of 
$\Omega^{q}(G)$ whose elements has compact support,
$\Omega^{q}(\bar{G})$ is the subspace of  
$\Omega^{q}(G)$ whose elements are smooth up to the boundary.
It was shown in \cite{Ji-Liu-Yu-2014} that 
\begin{equation}\label{eq:d-neumann-condiction}
	\alpha \in \wedge^{q}(\bar G) \cap \text{Dom}(d^{*}_{\varphi})  \Longleftrightarrow  (\nabla\tau  \lrcorner \alpha)_{x}=0 \;\; \text{for all x} \in \pa G.
\end{equation}
Note that the right hand side of \eqref{eq:d-neumann-condiction} is independent of the choice of the weight function.

\begin{lem}(\cite[Lemma 7.3]{Ji-Liu-Yu-2014})\label{lem:Bochner-indetity-d-operator-boundary-term-manifold}
	Let $G \subseteq M$ be a relative compact domain with boundary defining function $\tau \in C^{\infty}(\bar{G})$,
	for any smooth function $\varphi$ defined on $M$,
	we have the following identity 
\begin{equation}
\begin{aligned}
\|d\alpha \|^2_{\varphi}+\|d^{*}_{\varphi}\alpha \|^2_{\varphi}&=\int_{G} |\nabla \alpha  |^2 e^{-\varphi} dV_g 
	+\int_{G} \langle \text{Hess}_{\varphi} \alpha,\alpha  \rangle e^{-\varphi} dV_g \\
	&+\int_{M} 
	\langle 
		\sum^{n}_{i,j=1} \omega^{i} \wedge i_{e_{j}} R_{e_i e_j}
	\alpha, \alpha 
	\rangle e^{-\varphi}
	 dV_g \\
	&+ \int_{\pa G} \langle \text{Hess}_{\tau} \alpha,\alpha \rangle  e^{-\varphi} dS
\end{aligned}
	\end{equation}
	holds for any $\alpha \in \Omega^{q}(\bar{G}) \cap \text{Dom}(d^*_{\varphi})$.
\end{lem}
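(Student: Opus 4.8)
The plan is to run the Bochner technique on $G$, exactly as in the proof of Proposition~\ref{prop:Bochenr-identity-boundary-term-bundle}: first a pointwise operator identity, then one integration by parts with boundary. Writing the weighted codifferential as $d^{*}_{\varphi}=e^{\varphi}\circ\delta\circ e^{-\varphi}=\delta+i_{\nabla\varphi}$, one has $dd^{*}_{\varphi}+d^{*}_{\varphi}d=(d\delta+\delta d)+(d\,i_{\nabla\varphi}+i_{\nabla\varphi}\,d)=\square+\mathcal{L}_{\nabla\varphi}$. Inserting the classical Weitzenb\"ock formula $\square=\triangle^{\wedge(T^{*}M)}_{0}+\sum_{i,j}\omega^{i}\wedge i_{e_{j}}R_{e_{i}e_{j}}$ (this is exactly the content of $\Re_{q}=\square-\triangle^{\wedge(T^{*}M)}_{0}$; see \cite[Theorem~3.3]{Wu-Book-Bochner}) together with Cartan's formula in the form $\mathcal{L}_{\nabla\varphi}=\nabla^{\wedge(T^{*}M)}_{\nabla\varphi}+\text{Hess}_{\varphi}$ --- the Riemannian-manifold version of \eqref{eq:Lie-derivative-vector-bundle}, cf.\ \cite[Lemma~7.1]{Ji-Liu-Yu-2014} --- I would arrive at the pointwise identity
\[
dd^{*}_{\varphi}+d^{*}_{\varphi}d=\nabla^{*}_{\varphi}\nabla+\sum_{i,j=1}^{n}\omega^{i}\wedge i_{e_{j}}R_{e_{i}e_{j}}+\text{Hess}_{\varphi},
\]
where $\nabla^{*}_{\varphi}\nabla:=\triangle^{\wedge(T^{*}M)}_{0}+\nabla^{\wedge(T^{*}M)}_{\nabla\varphi}$ is the weighted connection (rough) Laplacian, i.e.\ the $\langle\langle\cdot,\cdot\rangle\rangle_{\varphi}$-adjoint of $\nabla^{\wedge(T^{*}M)}$.

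Next I would pair this identity with $\alpha$ and integrate over $G$ against $e^{-\varphi}dV_{g}$. The two zeroth-order terms pass through unchanged, contributing $\int_{G}\langle\text{Hess}_{\varphi}\alpha,\alpha\rangle e^{-\varphi}dV_{g}$ and $\int_{G}\langle\sum_{i,j}\omega^{i}\wedge i_{e_{j}}R_{e_{i}e_{j}}\alpha,\alpha\rangle e^{-\varphi}dV_{g}$. For the left-hand side, the manifold analogue of the Stokes identity \eqref{eq:stokes-formula}, namely $\langle\langle d\beta,\gamma\rangle\rangle_{\varphi}=\langle\langle\beta,d^{*}_{\varphi}\gamma\rangle\rangle_{\varphi}+\int_{\pa G}\langle\beta,\nabla\tau\lrcorner\gamma\rangle e^{-\varphi}dS$, together with the fact that $\nabla\tau\lrcorner\alpha=0$ on $\pa G$ (this is where $\alpha\in\text{Dom}(d^{*}_{\varphi})$, i.e.\ \eqref{eq:d-neumann-condiction}, is used), gives $\|d^{*}_{\varphi}\alpha\|^{2}_{\varphi}=\langle\langle dd^{*}_{\varphi}\alpha,\alpha\rangle\rangle_{\varphi}$ with no boundary term, while $\|d\alpha\|^{2}_{\varphi}=\langle\langle d^{*}_{\varphi}d\alpha,\alpha\rangle\rangle_{\varphi}+\int_{\pa G}\langle\nabla\tau\lrcorner d\alpha,\alpha\rangle e^{-\varphi}dS$. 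For the weighted rough Laplacian, the divergence theorem applied to the vector field $X$ metrically dual to $Y\mapsto e^{-\varphi}\langle\nabla^{\wedge(T^{*}M)}_{Y}\alpha,\alpha\rangle$ --- the same computation as in Proposition~\ref{prop:Bochenr-identity-boundary-term-bundle} --- yields $\langle\langle\nabla^{*}_{\varphi}\nabla\alpha,\alpha\rangle\rangle_{\varphi}=\int_{G}|\nabla^{\wedge(T^{*}M)}\alpha|^{2}e^{-\varphi}dV_{g}-\int_{\pa G}\langle\nabla^{\wedge(T^{*}M)}_{\nabla\tau}\alpha,\alpha\rangle e^{-\varphi}dS$. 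Collecting everything, the interior terms already reproduce the right-hand side of the Lemma and the remaining boundary contribution is $\int_{\pa G}\big(\langle\nabla\tau\lrcorner d\alpha,\alpha\rangle-\langle\nabla^{\wedge(T^{*}M)}_{\nabla\tau}\alpha,\alpha\rangle\big)e^{-\varphi}dS$.

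Finally I would identify this remaining integrand with $\langle\text{Hess}_{\tau}\alpha,\alpha\rangle$ on $\pa G$, using the boundary Cartan identity $d(\nabla\tau\lrcorner\alpha)+\nabla\tau\lrcorner d\alpha-\nabla^{\wedge(T^{*}M)}_{\nabla\tau}\alpha=\text{Hess}_{\tau}\alpha$: since $\nabla\tau\lrcorner\alpha$ vanishes on $\pa G$, its exterior derivative has no tangential component there, so $d(\nabla\tau\lrcorner\alpha)=d\tau\wedge(\,\cdot\,)$ on $\pa G$ and hence $\langle d(\nabla\tau\lrcorner\alpha),\alpha\rangle$ is a multiple of $\langle\,\cdot\,,\nabla\tau\lrcorner\alpha\rangle=0$ there; restricting to $\pa G$ and pairing with $\alpha$ thus gives $\langle\nabla\tau\lrcorner d\alpha,\alpha\rangle-\langle\nabla^{\wedge(T^{*}M)}_{\nabla\tau}\alpha,\alpha\rangle=\langle\text{Hess}_{\tau}\alpha,\alpha\rangle$ on $\pa G$, which finishes the proof. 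I expect this boundary bookkeeping to be the only delicate step --- making sure that every boundary integral produced by the integrations by parts either vanishes because of $\nabla\tau\lrcorner\alpha=0$ or is absorbed, via Cartan's formula, into the second fundamental form term $\int_{\pa G}\langle\text{Hess}_{\tau}\alpha,\alpha\rangle e^{-\varphi}dS$. Everything here is the weighted, boundary version of the standard Bochner--Kodaira computation; it is \cite[Lemma~7.3]{Ji-Liu-Yu-2014}, whose Euclidean flat-bundle analogue is Proposition~\ref{prop:Bochenr-identity-boundary-term-bundle} above.
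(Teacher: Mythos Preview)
The paper does not give its own proof of this lemma; it is simply quoted from \cite[Lemma~7.3]{Ji-Liu-Yu-2014}. Your proposal is correct and follows precisely the template of the paper's proof of the Euclidean analogue, Proposition~\ref{prop:Bochenr-identity-boundary-term-bundle}: a pointwise Weitzenb\"ock-type identity (now with the curvature term $\sum_{i,j}\omega^{i}\wedge i_{e_{j}}R_{e_{i}e_{j}}$ and the weight contribution $\text{Hess}_{\varphi}$ via $\mathcal{L}_{\nabla\varphi}=\nabla_{\nabla\varphi}+\text{Hess}_{\varphi}$), followed by one integration by parts, with the boundary integrand identified through the Cartan-type identity \eqref{eq:Lie-derivative-vector-bundle} and the $d$-Neumann condition \eqref{eq:d-neumann-condiction}.
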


For the curvature term:
$\langle 
\sum^{n}_{i,j=1} \omega^{i} \wedge i_{e_{j}} R_{e_i e_j}
	\alpha, \alpha 
\rangle,$
the argument by I.M.Singer presented in 
\cite[Theorem 3.3]{Wu-Book-Bochner} shows that:
$$
\begin{aligned}
	\langle 
		\sum^{n}_{i,j=1} \omega^{i} \wedge i_{e_{j}} R_{e_i e_j}
	\alpha, \alpha 
	\rangle&= 
\langle 
	R(\rho\Re\rho^*) \alpha,
	\alpha
\rangle \\
&=\sum_{I}
\langle 
	(\rho\Re\rho^*)
	 (\alpha\wedge \omega^{I}),
	(\alpha\wedge \omega^{I})
\rangle  \\
&=\sum_I \langle
	\Re \rho^*
	(\alpha\wedge \omega^{I}),
	\rho(\alpha\wedge \omega^{I}) 
	\rangle \\
	&=\sum_I \langle 
\Re\rho^* (\alpha \wedge \omega^I),
\rho^* (\alpha\wedge \omega^I)
	\rangle 
\end{aligned}
$$
this means the positivity of the inner product 
$\langle 
		\sum^{n}_{i,j=1} \omega^{i} \wedge i_{e_{j}} R_{e_i e_j}
	\alpha, \alpha 
	\rangle$ 
is equivalent to the positivity of the curvature operator 
$\Re$.
See \cite[Theorem 3.3]{Wu-Book-Bochner} for more details of the definition of the operator $\rho,\rho^*,R$, 
here we won't restate them.

\section{Proof of Theorem \ref{thm:main-results-manifold} and Theorem  \ref{thm:deformation-Riemannian-metirc-manifold}}\label{sec:main-results-manifold}

\begin{thm}
Let $(M,g)$ be a complete Riemannian manifold,
for any point $p \in M$,
if there exists a relatively compact neighborhood $U$ of $x_0$,
for any smooth closed $1$-form $f \in \Omega^{1}_{c}(U)\cap \text{Ker}(d)$ with compact support, 
for any smooth function $\varphi$ defined on $\bar{U}$,
there exists a $0$-form $u \in L^2(U,\varphi)$ satisfy the 
such that 
$$
\int_{U} \| u\|^2 e^{-\varphi} dV_{g} 
\leq 
\int_{U} \langle \text{Hess}^{-1}_{\varphi}f,f\rangle e^{-\varphi} dV_{g},
$$
provided the right hand side is finite,
then the curvature operator $\Re$ associated to the Riemannian metric $g$ is semi-positive definite at $p$.
\end{thm}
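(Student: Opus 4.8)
The plan is to mimic the contradiction argument used in the proof of Theorem~\ref{thm:cha nakano positive}, but now working in a normal coordinate neighborhood of $p$ where the square of the Riemannian distance to $p$ serves as a canonical smooth strictly convex exhaustion. First I would fix normal coordinates $(x_1,\dots,x_n)$ centered at $p$ on the relatively compact neighborhood $U$; the function $\rho(x)=|x|^2$ is then strictly convex near $p$, and one can arrange $U$ small enough that $\mathrm{Hess}(\rho)$ is uniformly positive definite and that all metric quantities are $C^1$-close to the Euclidean ones. Combining the hypothesis (solvability of $du=f$ with the stated weighted estimate) with the Cauchy--Schwarz inequality and the Bochner-type identity of Lemma~\ref{lem:Bochner-indetity-d-operator-boundary-term-manifold} applied with $\alpha=\mathrm{Hess}_{\varphi}^{-1}f$ (the boundary term drops since $f$ has compact support in $U$), I would extract, exactly as in \eqref{main-inequality-matrix}, the clean inequality
$$
\int_{U}|\nabla\alpha|^2 e^{-\varphi}\,dV_g
+\int_{U}\Big\langle \Big(\sum_{i,j}\omega^{i}\wedge i_{e_j}R_{e_i e_j}\Big)\alpha,\alpha\Big\rangle e^{-\varphi}\,dV_g\ \ge\ 0,
$$
valid for every smooth strictly convex $\varphi$ on $\bar U$ and every $\alpha$ of the form $\mathrm{Hess}_\varphi^{-1}f$ with $f\in\Omega^1_c(U)\cap\mathrm{Ker}(d)$.

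Next I would argue by contradiction: suppose $\Re$ is not semipositive at $p$. By the Singer-type identity recalled at the end of \S\ref{sec:Properties of curvature operator Re}, non-semipositivity of $\Re$ at $p$ is equivalent to the curvature term $\langle\sum_{i,j}\omega^i\wedge i_{e_j}R_{e_ie_j}\xi,\xi\rangle$ being strictly negative at $p$ for some $1$-form $\xi$; by continuity this stays $\le -c<0$ on a small ball $B(p,a)\subset U$ for a constant frame extension $\xi=\sum\xi_i\,\omega^i$ with (near-)constant coefficients in normal coordinates. I would then run the localization construction verbatim from the proof of Theorem~\ref{thm:cha nakano positive}: pick a cutoff $\chi\in C^\infty_c(B_a)$ equal to $1$ on $B_{a/2}$, set $f=\chi\,\xi$-type test form so that $df=0$ and $f$ equals the constant-coefficient form on $B_{a/2}$, take the weights $\psi_s(x)=s(|x|^2-\tfrac{a^2}{4})$, and put $\alpha_s=\mathrm{Hess}_{\psi_s}^{-1}f=\tfrac{1}{2s}f$. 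Since $f$ has compact support, $|\nabla\alpha_s|^2\le C/s$ on $U$ and vanishes on $B_{a/2}$; feeding $\alpha_s,\psi_s$ into the displayed inequality and letting $s\to+\infty$, the positive $|\nabla\alpha_s|^2$ term and the exterior contributions are suppressed by $e^{-\psi_s}\to 0$ off $B_{a/2}$ while the curvature term over $B_{a/2}$ (where $\psi_s\le 0$) dominates with a strictly negative sign, contradicting the inequality. Hence $\Re$ must be semipositive at $p$.

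The one genuinely new point compared to the flat trivial-bundle case is the bookkeeping of the error terms coming from the fact that in normal coordinates the metric is only \emph{approximately} Euclidean near $p$: $\mathrm{Hess}_{\psi_s}$ is not exactly $2s$ times the identity on forms, the frame $\{e_i\}$ is not exactly coordinate, and $\xi$ only has approximately constant coefficients. The hard part will be showing these discrepancies contribute terms that are either supported away from $B_{a/2}$ (hence killed by $e^{-\psi_s}$) or are lower order in $s$ relative to the $O(1)\cdot\int_{B_{a/2}}e^{-\psi_s}$ main term; shrinking $a$ to make the normal-coordinate error as small as we like, together with the scaling $\alpha_s=O(1/s)f$, handles this. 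Once that estimate is in place the contradiction is immediate, and the passage from the local inequality to semipositivity of $\Re$ at $p$ is purely the algebraic Singer identity already cited, so no further work is needed there.
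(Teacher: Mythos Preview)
Your outline is essentially the paper's proof: work in normal coordinates at $p$, use Cauchy--Schwarz together with Lemma~\ref{lem:Bochner-indetity-d-operator-boundary-term-manifold} to extract the inequality $\int_U|\nabla\alpha|^2e^{-\varphi}+\int_U\langle\Re_1\alpha,\alpha\rangle e^{-\varphi}\ge0$, and then contradict it by localizing with the weights $\varphi_s=s(|x|^2-\mathrm{const})$.

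Two small differences are worth noting. First, your test form ``$f=\chi\xi$'' is \emph{not} closed, since $d(\chi\xi)=d\chi\wedge\xi$; the paper instead sets $v_0=\sum_i\xi_i x_i$ (so that $dv_0=\xi$ in the normal chart) and takes $\alpha=d(\chi v_0)$, which is exact by construction and agrees with $\xi$ on the inner ball where $\chi\equiv1$. Second, the paper never forms $\alpha_s=\mathrm{Hess}_{\psi_s}^{-1}f$: it fixes $\alpha=d(\chi v_0)$ once (independent of $s$) and only varies the weight $\varphi_s$ in the main inequality, which removes one moving part from your bookkeeping. The normal-coordinate issue you flag---that $\mathrm{Hess}_{\psi_s}$ is not exactly $2s\cdot\mathrm{Id}$ on $1$-forms and that $\nabla\alpha$ does not literally vanish on the inner ball---is genuine (the paper simply asserts $|\nabla\alpha|=0$ there), and your proposed fix of shrinking $a$ so that these $O(|x|)$ errors are dominated by the strictly negative curvature constant $c$ is the correct way to close the argument.
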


\begin{proof}
Take $U$ be the normal coordinate neighborhood of $p$ 
with the coordinate systems $\{x_1, \cdots, x_n \}$ satisfy $p=(0,\cdots,0)$.

Set $$\varphi=\sum^{n}_{i=1} |x^2_{i}|-r^2_{0},$$
here $r_{0}>0$ is a positive number 
such that the sub-level set $U_{r_{0}}=\{x \in M; \varphi \leq 0 \}$ is relative compact in $U$,
it's clear $\varphi$ is a smooth strictly convex function defined on $U$.

By the assumption,
for any $\alpha \in \Omega^{1}_{c}(U)$, we have
$$
\begin{aligned}
 \langle  \langle \alpha,f \rangle   \rangle_{\varphi}^2 &= \langle  \langle \alpha ,du \rangle   \rangle_{\varphi}^2=\langle   \langle   d_{\varphi}^{*}\alpha ,u \rangle  \rangle_{\varphi}^2 \\
&\leq \langle   \langle  d_{\varphi}^{*}\alpha,d_{\varphi}^{*}\alpha\rangle  \rangle_{\varphi}  \cdot  \langle   \langle   u ,u \rangle  \rangle_{\varphi}.
\end{aligned}
$$
By the estimate of $\|u \|_{\varphi}^2$ and Lemma \ref{lem:Bochner-indetity-d-operator-boundary-term-manifold}, 
we have 
$$
\begin{aligned}
\langle   \langle  \alpha ,f \rangle  \rangle_{\varphi} ^2 \leq&  
\langle \langle  \text{Hess}^{-1}_{\varphi}f,f\rangle \rangle_\varphi \\
&\times \left(\int_{M} |\nabla \alpha|^2 e^{-\varphi}+\int_{M}\langle \text{Hess}_{\varphi} \alpha,\alpha \rangle e^{-\varphi} +\int_{M}\langle  \Re_{1}\alpha, \alpha \rangle e^{-\varphi} -\int_{M} |d \alpha|^2 e^{-\varphi} \right) \\
\leq&  
\langle \langle  \text{Hess}^{-1}_{\varphi}f,f\rangle \rangle_\varphi \\
&\times \left(\int_{M} |\nabla \alpha|^2 e^{-\varphi}+\int_{M}\langle \text{Hess}_{\varphi} \alpha,\alpha \rangle e^{-\varphi} +\int_{M}\langle  \Re_{1}\alpha, \alpha \rangle e^{-\varphi} \right)
\end{aligned}
$$
set
$$
f=\text{Hess}_{\varphi} \alpha
$$
then we have:
\begin{equation}\label{eq:main-inequality-manifold-curvature-operator}
	\int_{U} |\nabla \alpha|^2 e^{-\varphi}+\int_{U}\langle  \Re_{1}\alpha, \alpha \rangle e^{-\varphi} \geq 0
	\end{equation}
holds for any $\alpha \in \Omega^{1}_{c}(U)$.

We now argue by contradiction,
if the curvature operator $\Re$ is not semi-positive definite at $p$,
then there exist a $1$-form $\xi=\sum \xi_idx_i \in \wedge^{1}T^{*}_{p}(U)$,
a constant $c>0$ 
such that
$$
\langle \Re_{1} \xi_0, \xi_0  \rangle<-c <0,
$$
then the form 
$\xi=\sum \xi_i dx_i \in  \wedge^{1}T^{*}_{p}(U)$ is
a smooth with constant coefficients.

Take function 
$$v_0=\sum \xi_i x_i \in C^{\infty}(U)$$ 
then we have $dv_0=\xi$, 
set 
$$
v(x)=v_0\cdot \chi(x),
$$
viewed as a smooth function on $U$,
where $\chi \in C^{\infty }_{c}(U_{r_{0}})$ satisfying $\chi (x)=1$ for $x\in U_{\frac{r_0}{2}}$. Set $\alpha =dv$, then $\alpha \in \Omega^{1}_{c}(U)$ and 
$$
\langle \Re_1 \alpha,\alpha \rangle(x)<-c<0 
$$
holds for any $x\in U_{\frac{r_0}{2}}$.

Set 
$$
\varphi_{s} =s(\sum^{n}_{i=1} |x^2_{i}|-\frac{ r^2_{0}}{4}),
$$
Replacing $\psi$ in the left hand of \eqref{eq:main-inequality-manifold-curvature-operator} by $\varphi_s$,
we have:
$$
	\int_{U} |\nabla \alpha|^2 e^{-\varphi_s}+\int_{U}\langle  \Re_{1}\alpha, \alpha \rangle e^{-\varphi_s} \geq 0
$$
for any $s>0$.

By the construction 
of $\alpha$, 
there exists a constant $C>0$
such that $|\nabla \alpha|(x) \leq C$ 
holds for any $x \in U$ 
and $|\nabla \alpha|(x)=0$ for any $x\in U_{\frac{r_0}{2}}$.

Notice that 
$\displaystyle \lim_{s \ra +\infty} \varphi_s =+\infty$ when 
$x \in U \backslash U_{\frac{r_0}{2}}$,
and $\varphi_s \leq 0 $ when $x\in U_{\frac{r_0}{2}}$,
then we have:
$$
\begin{aligned}
 \int_{U} |\nabla \alpha|^2 e^{-\varphi_s}+\int_{U}\langle  \Re_{1}\alpha, \alpha \rangle e^{-\varphi_s} 
  &\leq 
-c\int_{U_{\frac{r_0}{2}}} e^{-\varphi_s} + \int_{U \backslash U_{\frac{r_0}{2}}}\langle  \Re_{1}\alpha, \alpha \rangle e^{-\varphi_s}
\\
&+C \int_{U \backslash U_{\frac{r_0}{2}}} e^{-\varphi_s} +0<0
\end{aligned}
$$
when $s \ra +\infty$,
this contradicts to  \eqref{eq:main-inequality-manifold-curvature-operator}.
Thus $\Re$ is semi-positive definite at $p$.
\end{proof}

Recall the definite of $q$-convex manifold.

\begin{defn}
Assume $(M,g)$ is a complete Riemannian manifold,
we say $M$ is a $q$-convex manifold if it admits a smooth exhaustive strictly $q$-convex function.
\end{defn}

Now we are ready to prove 
Theorem  \ref{thm:deformation-Riemannian-metirc-manifold}.

\begin{thm}
Assume $(M,g_t)$ is a complete homogeneous $1$-convex Riemannian manifold,
where $g_{t}$ is an increasing sequence of complete Riemannian metric with semi-positive definite the curvature operator $\mathscr{R}^{t}$ on $M$  
and converge to a Riemannian metric $g$,
then the curvature operator $\mathscr{R}$ associated to $g$ is semipositive definite on M.
\end{thm}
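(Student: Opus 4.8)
The plan is to run the same strategy that proves Theorem \ref{thm:real-version-lempert}, but on the manifold $M$ rather than on a domain in $\mr^n$, using the homogeneous $1$-convexity of $M$ as the replacement for the canonical quadratic exhaustion $\psi(x)=a\|x-x_0\|^2-b$. Concretely, fix a point $p\in M$; since $M$ is homogeneous $1$-convex, there is a smooth strictly convex exhaustive function $\psi$ on $M$ for which $p$ is a nondegenerate critical point, and we may normalize so that $\psi(p)=0$ and $\psi>0$ on $M\setminus\{p\}$. The sub-level sets $M_c=\{\psi\le c\}$ are then relatively compact, and we can localize near $p$ exactly as in the proof of Theorem \ref{thm:main-results-manifold}.

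The key steps, in order. First I would establish, for each $t$, the $L^2$ existence estimate for the $d$-equation associated with the metric $g_t$ on relatively compact sub-level sets $M_c$: for any $d$-closed $1$-form $f$ (smooth up to the boundary, in $\mathrm{Dom}(d^*_{g_t})$) and any strictly convex weight $\varphi$, there is $u\in L^2(M_c,\varphi)$ with $du=f$ and $\int |u|^2_{g_t}e^{-\varphi}\,dV_{g_t}\le\int\langle\mathrm{Hess}_\varphi^{-1}f,f\rangle_{g_t}e^{-\varphi}\,dV_{g_t}$. This is the manifold analogue of Theorem \ref{thm:L^2 estimate for vector bundle}: one combines the Bochner identity of Lemma \ref{lem:Bochner-indetity-d-operator-boundary-term-manifold}, the semipositivity of the curvature operator $\mathscr{R}^t$ (so the curvature term is $\ge0$), the convexity of $\psi$ (so $\mathrm{Hess}_\psi\ge0$ contributes nonnegatively to the boundary term on $\partial M_c$), and the $\mathrm{Hess}_\varphi$ term on the left, feeding the resulting inequality $\|d^*_{g_t,\varphi}\alpha\|^2+\|d\alpha\|^2\ge\langle\langle\mathrm{Hess}_\varphi\alpha,\alpha\rangle\rangle_\varphi$ into the standard Hörmander/Hahn--Banach duality argument used in the proof of Theorem \ref{thm:L^2 estimate for vector bundle}. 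Second, using that $\{g_t\}$ is increasing and converges to $g$, for a fixed $f$ and $\varphi$ the right-hand sides $\langle\langle\mathrm{Hess}_\varphi^{-1}f,f\rangle\rangle_{\varphi,g_t}$ are monotone and bounded above by $\langle\langle\mathrm{Hess}_\varphi^{-1}f,f\rangle\rangle_{\varphi,g}$, so the solutions $u_t$ are uniformly $L^2$-bounded; by Banach--Alaoglu a subsequence converges weakly to some $u$ with $du=f$ and $\int|u|^2_g e^{-\varphi}\,dV_g\le\int\langle\mathrm{Hess}_\varphi^{-1}f,f\rangle_g e^{-\varphi}\,dV_g$. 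Third, having verified the hypothesis of Theorem \ref{thm:main-results-manifold} (in its local form, or of Corollary \ref{cor:postive-curvature-operator-manifold}) for the limit metric $g$ at the point $p$ — taking $f\in\Omega^1_c(U)\cap\mathrm{Ker}(d)$ supported in a normal-coordinate neighborhood $U\subset M_c$ of $p$ and $\varphi$ any smooth strictly convex function there — we conclude that $\mathscr{R}$ is semipositive definite at $p$; since $p\in M$ was arbitrary, $\mathscr{R}\ge0$ on all of $M$.

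One subtlety to watch is that the weight $\varphi$ in Theorem \ref{thm:main-results-manifold} is required only near $p$, whereas to run the $L^2$ machinery on $M_c$ one needs it (together with the strictly convex exhaustion $\psi$) globally enough on $\bar M_c$; I would handle this by taking $\varphi$ of the localized form $\chi\cdot\varphi_s$ plus a large multiple of $\psi$, as in the proofs above, so that the total weight is strictly convex on $\bar M_c$ while agreeing with the desired test weight on the normal neighborhood. A second point is checking that $d^*$-domain (Neumann) boundary conditions on $\partial M_c$ do not obstruct the argument: since the test forms can be taken with compact support in the interior, or since $\psi$ is convex so the boundary term has a favorable sign, this causes no trouble.

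The main obstacle I anticipate is Step 1: making the manifold $L^2$-existence theorem genuinely rigorous on the relatively compact sub-level sets $M_c$ with the correct boundary conditions, i.e. verifying that the Bochner--Kodaira--Hörmander identity of Lemma \ref{lem:Bochner-indetity-d-operator-boundary-term-manifold} plus curvature semipositivity really yields the a priori estimate on all of $\mathrm{Dom}(d)\cap\mathrm{Dom}(d^*)$ (density of forms smooth up to the boundary, closed range of $d$), and then passing from $M_c$ to $M$ via the exhaustion and a weak-limit/diagonal argument. Everything after that — the increasing-metric weak limit and the contradiction argument localizing the failure of $\mathscr{R}\ge0$ at $p$ — is a direct transcription of the arguments already carried out for Theorems \ref{thm:cha nakano positive}, \ref{thm:real-version-lempert}, and \ref{thm:main-results-manifold}.
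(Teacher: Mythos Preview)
Your proposal is correct and follows essentially the same route as the paper: obtain the $L^2$-existence estimate for each $g_t$, pass to a weak limit using monotonicity of the metrics, and then invoke Corollary \ref{cor:postive-curvature-operator-manifold}. The only difference is that for Step 1 the paper does not redo the H\"ormander argument on sublevel sets but simply cites Theorem 7.1 of \cite{Ji-Liu-Yu-2014}, which is exactly the manifold $L^2$-existence theorem you propose to reprove; so the ``main obstacle'' you flag is already available as a black box in the literature, and the paper's proof is correspondingly a short paragraph.
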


\begin{proof}
	By the assumption,
for any smooth function $\psi$ defined on $M$
	we have 
	$$
	\langle \langle \text{Hess}_{\psi}^{-1}f,f \rangle\rangle_{g_{j}} \leq	\langle \langle \text{Hess}_{\psi}^{-1}f,f \rangle\rangle_{g}
	$$
holds for all $f\in \Omega^{1}_{c}(M)\cap \text{Ker}(d)$ and any $j \in \mathbb{N}^*$, provided the right hand side is finite.

For each $ j$,
Theorem 7.1 in \cite{Ji-Liu-Yu-2014} shows that,
there exists $u_j \in L^2_{0}(M,\psi)$ such that $du_j=f$ with 
$$
 \int_{M} | u_{j}|^2_{g_j}e^{-\psi}  
 \leq  
 \int_{M} \langle \text{Hess}_{\psi}^{-1} f,f \rangle_{g_j}  e^{-\psi} 
 \leq \int_{D} \langle \text{Hess}_{\psi}^{-1}f,f \rangle_{g}e^{-\psi} ,
$$
this means $\{u_j \}$ is uniformly bounded above in $L^2$, 
by Algaoglu's Theorem, 
there exists a subsequence $\{u_{c_j} \}$ of  $\{u_j\}$ converging weakly to $u$ in $L^2$ with $du=f$ and 
$$
\int_{M}  |u|^2_{g}  e^{-\psi} \leq  \underset{ j \ra +\infty}{\lim \inf} \int_{M} | u_{j}|^2_{g^j}e^{-\psi}  \leq \int_{M} \langle \text{Hess}_{\psi}^{-1} f,f \rangle_{g}  e^{-\psi},
$$
Corollary \ref{cor:postive-curvature-operator-manifold}
 shows that the curvature operator $\Re$ semipositive definite on $M$.
\end{proof}

\section{The boundary convexity on domain in $1$-convex manifold}
\label{sec:boundary-Riemannian-manifold}

We prove Theorem \ref{thm:char-q-convex-manifold}
in this section. We need the following lemma.

\begin{lem}(
\cite[Theorem 9.51]{GTM218} \label{lem:exist-d-1-form-boundary})
Let  $M$ be a  smooth  manifold with real dimensional $n$,
	for a hypersurface $S$ in $M$ with smooth defining function: $\rho: M \ra \mr$, where $|\nabla \rho (x)| \neq 0$ for all $x \in S$.
	For any smooth function $\varphi:S \ra \mr$, 
	there exists a unique solution $u$ to the following equation with Cauchy values.
	\begin{equation}
		\begin{cases}
			\nabla \rho \cdot \nabla u =0 \\
			u|_{S}=\varphi
					\end{cases}
	\end{equation}
\end{lem}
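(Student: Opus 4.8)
The plan is to solve this by the classical method of characteristics, using the flow of the gradient vector field of $\rho$. First I would record the reformulation $\nabla\rho\cdot\nabla u=g(\nabla\rho,\nabla u)=du(\nabla\rho)=Xu$, where $X:=\nabla\rho$; thus the equation demands exactly that $u$ be constant along the integral curves of $X$. Since $S=\{\rho=0\}$ is a regular level set of $\rho$, the vector field $X$ is $g$-orthogonal to $TS$ along $S$, and by the hypothesis $|\nabla\rho|\neq 0$ on $S$ it is nonvanishing there; by continuity $X$ is a smooth nowhere-vanishing vector field on some open neighborhood of $S$, and it is transverse to $S$ at every point of $S$.

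Next I would invoke the flowout theorem: writing $\theta_t$ for the local flow of $X$, transversality of $X$ to the hypersurface $S$ together with the inverse function theorem applied along $S\times\{0\}$ yields $\varepsilon>0$ and an open neighborhood $W$ of $S$ such that
$$\Phi:S\times(-\varepsilon,\varepsilon)\ra W,\qquad \Phi(y,t)=\theta_t(y),$$
is a diffeomorphism with $\Phi(y,0)=y$. Then I would simply \emph{define} $u:=\varphi\circ\pi_S\circ\Phi^{-1}$ on $W$, with $\pi_S$ the projection onto the first factor. This $u$ is smooth; it satisfies $u|_S=\varphi$ since $\Phi^{-1}$ restricts to $y\mapsto(y,0)$ on $S$; and along each characteristic $t\mapsto\theta_t(y)$ one has $u(\theta_t(y))=\varphi(y)$, so $Xu=\frac{d}{dt}u(\theta_t(y))=0$ on $W$. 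For uniqueness I would take two solutions $u_1,u_2$ on a common neighborhood of $S$, set $w:=u_1-u_2$, note $Xw=0$ and $w|_S=0$, and conclude that $t\mapsto w(\theta_t(y))$ is constant equal to $w(y)=0$; since the characteristics through points of $S$ fill a neighborhood of $S$, $w\equiv 0$ there.

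The step I expect to need the most care is the globalization: upgrading the pointwise transversality of $\nabla\rho$ along $S$ to an actual diffeomorphism $\Phi$ on a full neighborhood of $S$ (equivalently, trivializing the normal bundle of $S$ by the flow of $X$), and, in tandem, being explicit that the solution is unique only on such a tubular neighborhood of $S$ --- the equation $\nabla\rho\cdot\nabla u=0$ imposes nothing where $\nabla\rho$ vanishes. All of this is exactly the content of \cite[Theorem 9.51]{GTM218}, and no further estimates are required for the application to Theorem \ref{thm:char-q-convex-manifold}.
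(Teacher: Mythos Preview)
Your proposal is correct and is precisely the standard method-of-characteristics/flowout argument that underlies \cite[Theorem 9.51]{GTM218}; the paper itself gives no proof of this lemma and simply cites that reference. Your caveat about uniqueness holding only on a tubular neighborhood of $S$ is well taken and matches the cited statement.
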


\begin{thm}[=Theorem \ref{thm:char-q-convex-manifold}]
	Suppose $(M,g)$ is a $n$-dimensional complete homogeneous $1$-convex Riemannian manifold with semi-positive definite curvature operator $\Re_q$.
	Let $G$ be a relatively compact domain in $M$ with smooth boundary defining function $\tau: M \ra \mr$.
	If for any closed $1$-form smooth up to the boundary $f \in \Omega^{q}(\bar{G})  \cap \text{Dom}(d^*) \cap \text{Ker}(d)$, 
for any strictly convex function $\psi$ on $M$,
there exists a solution $u\in L^2_{q-1}(G, \psi )$ satisfying  $du=f$
and
	$$
	\int_{G} |u|^2 e^{-\psi}d\lambda  \leq \int _{G} \langle  \text{Hess}^{-1}_{\psi} f, f \rangle e^{-\psi}d\lambda  ,
	$$	
provided the right hand side is finite,
then $\pa G$ is $q$-convex.
\end{thm}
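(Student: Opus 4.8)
The plan is to argue by contradiction, following the scheme used for Theorem~\ref{thm:cha nakano positive} and Theorem~\ref{thm:main-results-manifold}, but now with the boundary term in the Bochner identity of Lemma~\ref{lem:Bochner-indetity-d-operator-boundary-term-manifold} playing the decisive role. Suppose $\pa G$ is not $q$-convex. Replacing $\tau$ by the signed distance function to $\pa G$ near the boundary (harmless, since two defining functions differ by a positive factor), there is a point $x_0\in\pa G$, a constant $c>0$, and a tangential $q$-form $\xi\in\wedge^q T^*_{x_0}(\pa G)$ (i.e.\ $\nabla\tau\lrcorner\xi=0$) with $\langle\text{Hess}_\tau\,\xi,\xi\rangle(x_0)<-c|\xi|^2$; equivalently, some $q$ of the eigenvalues of the second fundamental form of $\pa G$ at $x_0$ have negative sum.

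\emph{Step 1: a master inequality.} For any $\alpha\in\Omega^q(\bar G)\cap\text{Dom}(d^*_\psi)$ and any admissible $f$ (closed, smooth up to $\bar G$, tangential on $\pa G$, with finite right-hand side), the solvability hypothesis and Stokes' formula \eqref{eq:stokes-formula} give
$$\langle\langle\alpha,f\rangle\rangle_\psi^2=\langle\langle d^*_\psi\alpha,u\rangle\rangle_\psi^2\le\|d^*_\psi\alpha\|_\psi^2\,\langle\langle\text{Hess}_\psi^{-1}f,f\rangle\rangle_\psi,$$
while Lemma~\ref{lem:Bochner-indetity-d-operator-boundary-term-manifold} together with the computation of \S\ref{sec:Properties of curvature operator Re} (which rewrites the curvature term so that it equals $\int_G\langle\Re_q\alpha,\alpha\rangle e^{-\psi}dV_g\ge0$ once $\Re_q\ge0$) yields
$$\|d^*_\psi\alpha\|_\psi^2\le\int_G|\nabla\alpha|^2e^{-\psi}dV_g+\int_G\langle\text{Hess}_\psi\alpha,\alpha\rangle e^{-\psi}dV_g+\int_G\langle\Re_q\alpha,\alpha\rangle e^{-\psi}dV_g+\int_{\pa G}\langle\text{Hess}_\tau\alpha,\alpha\rangle e^{-\psi}dS.$$
Now take $f=\text{Hess}_\psi\alpha$; then $\langle\langle\alpha,f\rangle\rangle_\psi=\langle\langle\text{Hess}_\psi\alpha,\alpha\rangle\rangle_\psi=\langle\langle\text{Hess}_\psi^{-1}f,f\rangle\rangle_\psi$, so dividing and cancelling the common term (legitimate since $\text{Hess}_\psi$ is positive definite on $q$-forms, a strictly convex function being in particular strictly $q$-convex) we obtain
$$0\le\int_G|\nabla\alpha|^2e^{-\psi}dV_g+\int_G\langle\Re_q\alpha,\alpha\rangle e^{-\psi}dV_g+\int_{\pa G}\langle\text{Hess}_\tau\alpha,\alpha\rangle e^{-\psi}dS$$
for every such $\alpha$. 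This is the inequality to be violated.

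\emph{Step 2: the test data and the limit.} As in \cite{Deng-Zhang} one wants the weight to concentrate at $x_0$, but on a curved manifold one must \emph{also} arrange $f=\text{Hess}_\psi\alpha$ to be closed and tangential so that it is admissible. The natural choice is to work in Fermi coordinates $(\rho,y)=(\tau,y^1,\dots,y^{n-1})$ along $\pa G$ near $x_0$ ($y$ normal coordinates of $\pa G$ centred at $x_0$) and let $\varphi_0$ equal $\tfrac12(\rho^2+|y|^2)$ near $x_0$, extended by the homogeneous $1$-convexity hypothesis to a smooth strictly convex exhaustion of $M$ whose unique critical point is $x_0$; put $\psi=\psi_s=s\varphi_0$. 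Then $\nabla\varphi_0(x_0)=0$ and $\text{Hess}(\varphi_0)(x_0)=\mathrm{Id}$, so $\text{Hess}_{\psi_s}$ is a scalar at $x_0$ and differs from it by $O(s\,\mathrm{dist}(\cdot,x_0)^2)$ nearby. One then builds $\alpha$ from $\xi$ and a cutoff supported near $x_0$: using Lemma~\ref{lem:exist-d-1-form-boundary} produce a $(q-1)$-form $\nu$ with $d\nu$ tangential on $\pa G$ and, near $x_0$, $d\nu$ prescribed so that $\text{Hess}_{\psi_s}^{-1}(d\nu)$ agrees to leading order with $\tfrac{1}{2sq}\xi$; after multiplying by a cutoff $\chi$ (chosen with $\nabla\tau\cdot\nabla\chi=0$ on $\pa G$) set $f=d(\chi\nu)$ and $\alpha=\text{Hess}_{\psi_s}^{-1}f$, so that $f$ is closed and tangential and $\alpha\in\text{Dom}(d^*_{\psi_s})$. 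On $\pa G$ near $x_0$ one has $\langle\text{Hess}_\tau\alpha,\alpha\rangle\le-c' s^{-2}|\xi|^2<0$, while $|\alpha|=O(s^{-1})$ and $|\nabla\alpha|=O(s^{-1})$ on all of $\bar G$. Applying Laplace's method to $e^{-s\varphi_0}$, whose minimum $x_0$ lies on $\pa G$, gives $\int_{\pa G}e^{-s(\varphi_0-\varphi_0(x_0))}dS\sim s^{-(n-1)/2}$ and $\int_G e^{-s(\varphi_0-\varphi_0(x_0))}dV_g\sim s^{-n/2}$; hence the boundary term is of order $-s^{-2-(n-1)/2}$, whereas $\int_G|\nabla\alpha|^2e^{-\psi_s}$ and $\int_G\langle\Re_q\alpha,\alpha\rangle e^{-\psi_s}$ are of order $s^{-2-n/2}$, smaller by a factor $s^{1/2}$. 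For $s$ large the right-hand side of the master inequality is therefore strictly negative, a contradiction, so $\pa G$ is $q$-convex.

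The hard part, I expect, is exactly the reconciliation in Step~2: on a curved manifold $\text{Hess}_\psi$ is not a scalar operator, so $\text{Hess}_\psi\alpha$ is neither automatically $d$-closed nor automatically in $\text{Dom}(d^*)$ (both were free in the flat case of \cite{Deng-Zhang}), and one must design $\psi_s$ and $\alpha$ jointly --- via the adapted Fermi coordinates and the Cauchy problem of Lemma~\ref{lem:exist-d-1-form-boundary} --- so that the pair $(\alpha,f=\text{Hess}_{\psi_s}\alpha)$ meets all the boundary compatibility conditions while still concentrating at $x_0$. This is the ``new construction of the weighted function'' alluded to in the introduction. The remaining care is in checking that the unavoidable $O(\mathrm{dist})$ curvature corrections --- in $\text{Hess}(\varphi_0)$, in the Fermi metric, and in the pointwise identification of $\alpha$ with $\tfrac{1}{2sq}\xi$ --- are genuinely of lower order as $s\to+\infty$, so that the dimension-counting of the two concentration integrals decides the sign.
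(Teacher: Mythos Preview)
Your outline follows the same contradiction scheme as the paper, but diverges at two key places, and the ``hard part'' you flag is precisely what the paper's argument is designed to avoid.

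\textbf{The master inequality.} You take a test form $\alpha$ and set $f=\text{Hess}_\psi\alpha$, which forces you to arrange that $\text{Hess}_\psi\alpha$ be simultaneously $d$-closed and in $\text{Dom}(d^*)$ --- exactly the difficulty you identify, and your Fermi-coordinate sketch does not actually carry this out. The paper instead takes $\alpha=f$ in the Cauchy--Schwarz step, so the Bochner identity is applied to $f$ itself (with $df=0$), and the only object that needs to be admissible is $f$. The required $f$ is built directly as a wedge $f_1\wedge\cdots\wedge f_q$ of closed tangential $1$-forms produced by Lemma~\ref{lem:exist-d-1-form-boundary}, with $f(p)=\omega^1\wedge\cdots\wedge\omega^q$. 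The price is that one must relate $\langle\langle\text{Hess}^{-1}_{\tilde\varphi}f,f\rangle\rangle$ to $\langle\langle f,f\rangle\rangle$ without the algebraic cancellation your choice gives. This is where the ``new construction of the weighted function'' enters: in normal coordinates at $p$ one has $\sum_k x_k\overline{\Gamma}^k_{ij}=0$, so $\psi=\sum x_i^2$ satisfies $\text{Hess}(\psi)=2\delta_{ij}$ \emph{exactly} (not just to leading order) throughout the normal neighborhood, hence $\langle\text{Hess}_\psi\alpha,\alpha\rangle=2q|\alpha|^2$ there. Gluing this local $\psi$ with a global strictly convex exhaustion (via the homogeneity hypothesis) and rescaling gives a weight for which $\langle\langle\text{Hess}^{-1}_{\tilde\varphi}f,f\rangle\rangle\le\tfrac{1}{2q}\langle\langle f,f\rangle\rangle$, and the cancellation goes through.

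\textbf{The concentration.} Rather than Laplace asymptotics in $s$ comparing $s^{-n/2}$ against $s^{-(n-1)/2}$, the paper exploits a second parameter: the radius $r_0$ of the neighborhood $\widetilde{U_{r_0}}$. A diffeomorphism $\partial G\cap\widetilde{U_{r_0}}\to\partial\widetilde{U_{r_0}}$ together with Fubini on the sublevel sets gives $\int_{\partial G\cap\widetilde{U_{r_0}}}e^{-\tilde\varphi_s}\,dS\gtrsim r_0^{-1}\int_{\widetilde{U_{r_0}}}e^{-\tilde\varphi_s}$, so for $r_0$ small the boundary integral dominates the interior ones, and then $s\to+\infty$ finishes. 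Your dimension-counting would also work in principle, but only once the admissibility of $f=\text{Hess}_\psi\alpha$ is secured, which is the step you have not supplied.
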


\begin{proof}
We begin our argument by contradiction,
if $\pa G$ is not $q$-convex,
for any 
	non-$q$-convex boundary point 
	$p$ in $\pa G$,
	we construct a strictly convex function that is nondegenerate at $p$ in the following steps.

Since $M$ is homogeneous $1$-convex,
there exists a  smooth strictly 
convex function 
$\varphi$ such that $p$ is 
the nondegenerate 
critical point of 
$\varphi$,
we can assume
$\varphi(p_0)=0$ 
by adding some constants,
for any $r_0>0$,
set $U_{r_0}
=\{x \in M;\varphi(x)<r_0 \}$.

For the normal coordinate 
neighborhood $\mathcal{O}_p(\delta_p)$ of $p$
with
the coordinate systems 
$\{x_1, \cdots, x_n \}$,
for any smooth geodesic 
$\gamma:(-t,t) \ra M$ connecting $p$ and $x$ in $\mathcal{O}_p(\delta_p)$,
since $\frac{\pa }{\pa x_j} \gamma$ is parallel along $\gamma$,
we have:
$$
0=\nabla_{\gamma '} \frac{\pa }{\pa x_j}
=\sum x_i\nabla_{\frac{\pa \gamma}{\pa x_j} } \frac{\pa }{\pa x_j}
=\sum x_i\overline{\Gamma}^{k}_{ij} \gamma (t) \frac{\pa \gamma(t)}{\pa x_k}
$$
thus 
$$
\sum^{n}_{k=1}x_k \overline{\Gamma}^{k}_{ij}(x)=0
$$
holds for any $x \in \mathcal{O}_p(\delta_p)$,
then for the function 
$$
\psi=\sum^{n}_{i=1} x^2_i,
$$
we have
$$
	\begin{aligned}
		\text{Hess}(\psi)(\frac{\pa}{\pa x_i},\frac{\pa}{\pa x_j}) 
   &=2\delta_{ij}-2\sum^{n}_{k=1}x_k \overline{\Gamma}^{k}_{ij} \\
   &=2\delta_{ij},
	\end{aligned}
$$
thus for any $q$-form 
$\alpha \in \Omega^q (\mathcal{O}_p(\delta_p))$ 
we have 
\begin{equation}\label{eq:hessian-calculation}
	\langle \text{Hess}_{\psi} \alpha ,\alpha  \rangle (x)
 = 2 q \langle \alpha, \alpha \rangle (x)
\end{equation}
holds for any 
$x \in\mathcal{O}_p(\delta_p)$.
We take $r_0$ small enough 
such that 
$U_{r_0} \subset \subset 
\mathcal{O}_p(\delta_p)$,
set
\begin{equation}\label{eq:construct-weight}
\tilde{\varphi}(x)=
\begin{cases} 
\psi-r_0 & x \in U_{r_0} \\
\max \{\psi-r_0 ,\varphi-r_0 \} 
& x \in \mathcal{O}_p(\delta_p) \backslash U_{r_0} \\
\varphi-r_0 
& x \in M \backslash \mathcal{O}_p(\delta_p)
\end{cases}
\end{equation}
then $\tilde{\varphi}(x)$ is 
strictly convex on $M$,
by the smooth theorem of strictly convex function (\cite{Green-Wu-1978}) on the manifold,
we may assume $\tilde{\varphi}$ is smooth, 
set
$\widetilde{U_{r_0}}
=\{x \in M;\widetilde{\varphi(x)}<0 \}$.

We choose a smooth tangent 
	orthonormal tangent frame field 
	$\{e_1,...,e_n \}$ in a neighborhood of $p$ 
	such that $e_n=N$ 
	(the out normal vector)
	on the boundary such that 
	at the $p$ we have:
\begin{enumerate}[(i)]
\item $p_0$ is the origin
	\item $\nabla_{e_j}N=k_j e_j$ for $j=1,\cdots,(n-1)$ for some positive numbers.
	For convenience,
we assume 
$$k_1 \leq \cdots \leq k_{n-1}.$$
\end{enumerate}
the assumption shows that
there exists 
a tangent $q$-form
 $\xi=
 \omega^1\wedge \cdots \wedge \omega^q 
 \in \wedge^{q} T^*_{p} \pa G$ 
such that 
$$
\langle \text{Hess}_{\tau} \xi,\xi \rangle
=(\sum^{q}_{i=1}k_i) |\xi|^2<0.
$$

Lemma \ref{lem:exist-d-1-form-boundary} shows that,
for any $1 \leq i \leq q$, 
there exists $f_i \in \Omega^{1}(\bar{G}) \cap \text{Dom}(d^*) \cap \text{Ker}(d)$ such that $f_i(O)=\omega^i$,
set
$$
f=f_1 \wedge \cdots \wedge f_q,
$$
we have $f \in \Omega^{1}(\bar{G}) \cap \text{Dom}(d^*) \cap \text{Ker}(d)$ and $f (O)= \omega^1\wedge \cdots \wedge \omega^q$.
By the continuity,
there exists a constants $c<0$  such that:	
$$
\langle \text{Hess}_{\tau}f, f  \rangle (x) <-c
$$
holds for all $x\in  \widetilde{U_{r_0}} \cap \pa G$.

Then
we have 
$$
\begin{aligned}
 \langle  \langle f ,f \rangle   \rangle_{\tilde{\varphi}}^2 
 &= \langle  \langle f,du \rangle   \rangle_{\tilde{\varphi}}^2
 =\langle   \langle   d_{\tilde{\varphi}}^{*}f ,u \rangle  \rangle_{\tilde{\varphi}}^2 \\
&\leq \langle   \langle  d_{\tilde{\varphi}}^{*}f ,d_{\tilde{\varphi}}^{*}f \rangle  \rangle_{\tilde{\varphi}}  \cdot  \langle   \langle   u ,u \rangle  \rangle_{\tilde{\varphi}}
\end{aligned}
$$
combine this with the estimate of $\| u\|^2_{\tilde{\varphi}}$ and Lemma \ref{lem:Bochner-indetity-d-operator-boundary-term-manifold},
we have 
$$
\begin{aligned}
\langle   \langle  f,f \rangle  \rangle_{\tilde{\varphi}} ^2 \leq&  
\langle   \langle   \text{Hess}_{\tilde{\varphi}}^{-1} f ,f \rangle  \rangle_{\tilde{\varphi}} \cdot\\
&(
\int_{G} |\nabla f |^2 e^{-\tilde{\varphi}}dV_g
 +\int _{G} \langle    \text{Hess}_{\tilde{\tilde{\varphi}}} f,f\rangle e^{-\tilde{\varphi}}dV_g
 +\int _{G} \langle    \Re_q f,f\rangle e^{-\tilde{\varphi}}dV_g \\
 &+\int_{\pa  G}\langle  \text{Hess}_{\tau} f, f \rangle   e^{-\tilde{\varphi}}dS) \\
\end{aligned}
$$

From the assumption in the theorem,
we may assume that
there exist constants
$\lambda_{1}, \lambda_n>0$
such that 
$$
\lambda_1 |f|^2
\leq 
\langle \text{Hess}_{\varphi }f,f \rangle (x)
\leq 
\lambda_n|f|^2
$$
holds for any $x \in G \backslash \widetilde{U_{r_0}}$, 
we may assume $\lambda_1 >2q$ by multiply positive
constant with $\varphi$ in \eqref{eq:construct-weight}.
We can also find a constant $t_n>0$ such that 
$$
\langle \Re_q f,f \rangle \leq t_n 
$$
holds for any $x \in G$.
Then we have: 
$$
\begin{aligned}
\langle   \langle  f,f \rangle  \rangle_{\tilde{\varphi}} ^2 \leq&  
\langle   \langle   \text{Hess}_{\tilde{\varphi}}^{-1} f ,f \rangle  \rangle_{\tilde{\tilde{\varphi}}} \cdot\\
&(
\int_{G} |\nabla f |^2 e^{-\tilde{\varphi}}dV_g
 +\int _{G} \langle  \text{Hess}_{\tilde{\varphi}} f,f\rangle e^{-\tilde{\varphi}}dV_g 
\\
 &+\int _{G} \langle    \Re_q f,f\rangle e^{-\tilde{\varphi}}dV_g 
 +\int_{\pa  G}\langle  \text{Hess}_{\tau} f, f \rangle   e^{-\tilde{\varphi}}dS )\\
 &\leq (\frac{1}{\lambda_1}\int_{G\backslash \widetilde{U_{r_0}}}|f|^2 dV_g 
 +\frac{1}{2q}\int_{\widetilde{U_{r_0}}} |f|^2 dV_g) \cdot  \\
 & (
\int_{G} |\nabla f|^2 e^{-\tilde{\varphi}}dV_g\\
 &+\lambda_n\int _{G\backslash \widetilde{U_{r_0}}}|f|^2 e^{-\tilde{\varphi}}dV_g 
 +2q\int_{\widetilde{U_{r_0}}} |f|^2 dV_g\\
 &+t_n \int _{G} |f|^2 e^{-\tilde{\varphi}}dV_g 
 +\int_{\pa  G}\langle  \text{Hess}_{\tau} f, f \rangle e^{-\tilde{\varphi}}dS
 )
\end{aligned}
$$
$$
\begin{aligned}
	&\leq \frac{1}{2q} \langle \langle f,f \rangle \rangle_{\tilde{\varphi}} \cdot \\
 & (
	\int_{G} |\nabla f|^2 e^{-\tilde{\varphi}}dV_g\\
	 &+\lambda_n\int _{G\backslash \widetilde{U_{r_0}}}|f|^2 e^{-\tilde{\varphi}}dV_g 
	 +2q\int_{\widetilde{U_{r_0}}} |f|^2 dV_g\\
	 &+t_n \int _{G} |f|^2 e^{-\tilde{\varphi}}dV_g 
	 +\int_{\pa  G}\langle  \text{Hess}_{\tau} f, f \rangle e^{-\tilde{\varphi}}dS
	 ),
\end{aligned}
$$

this means:
\begin{equation}\label{eq:boundary-main-1}
\int_{G} |\nabla f |^2 e^{-\tilde{\varphi}}dV_g
 +(\lambda_n -2q)\int _{G\backslash\widetilde{U_{r_0}} }  |f|^2 e^{-\tilde{\varphi}}dV_g 
+t_n\int _{G}|f|^2 e^{-\tilde{\varphi}}dV_g 
 +\int_{\pa  G}\langle  \text{Hess}_{\tau} f, f \rangle   e^{-\tilde{\varphi}}dS
 \geq 0
\end{equation}

Substitute $\tilde{\varphi}$ in \eqref{eq:boundary-main-1}
with $\tilde{\varphi}_s=s\tilde{\varphi}$, 
since $G$ is relatively compact, 
then there exists a constant $C>0$
such that 
$$ 
|f|^2 \leq   C, |\nabla f |^2 \leq C \;\; \text{for } \forall x \in G
$$
notice that
$\tilde{\varphi}_s(x) <0$ for all $x\in \widetilde{U_{r_0}}$ and $\tilde{\varphi}_s(x)\ra +\infty$ for all $x\in G \backslash \tilde{U_{r_0}}$,
then there exists constant $M_0, M_1>0$ such that:
\begin{equation}\label{eq:main-inequality-1-Riemannian-manifold}
\frac{sM_0}{e^s}+M_1 \int_{G\cap \widetilde{U_{r_0}}}e^{-\tilde{\varphi}_s}dV_g -c  \int_{\pa G\cap \widetilde{U_{r_0}}}  e^{-\tilde{\varphi}_s} dS  \geq 0.
\end{equation}
holds for any $s>1 ,r_0 >0$.

Next,
we estimate the integration in 
\eqref{eq:main-inequality-1-Riemannian-manifold} 
by terms.
For the first term,
we have:
$$
M_1  \int_{G\cap \widetilde{U_{r_0}}}e^{-\tilde{\varphi}_s}dV_g
 \leq M_1 \int_{\widetilde{U_{r_0}}}e^{-\tilde{\varphi}_s}dV_g.
$$

For the second integration in 
\eqref{eq:main-inequality-1-Riemannian-manifold},
one can verify that $O$ is 
the nondegenerate critical point of 
the boundary defining function $\tau$ by 
contracting $r_0$ if necessary.
Since $p_0$ is 
the nondegenerate critical point 
of the strictly convex function 
$\tilde{\varphi}$,
let 
$$
\sigma:\pa G\cap \widetilde{U_{r_0}} \ra \pa \widetilde{U_{r_0}}
$$
be the diffeomorphism,
then there exists constants $c_1,c_2>0$ such that:
$$
c  \int_{\pa G\cap \widetilde{U_{r_0}}}  e^{-\tilde{\varphi}_s} dS \geq c\cdot c_1  \int_{\pa  \widetilde{U_{r_0}}}  e^{-\tilde{\varphi}_s} dS =  c_2 r^{-1}_{0}  \int_{\widetilde{U_{r_0}}}  e^{-\tilde{\varphi}_s},
$$
the last equal sign follows from
the Fubini's theorem on the Riemannian manifold.

Thus when $r_0 \leq \frac{c_2}{M_1 +1}$,
we have:
$$
\begin{aligned}
\frac{sM_0}{e^s}+M_1 \int_{G\cap \widetilde{U_{r_0}}} &e^{-\tilde{\varphi}_s}dV_g -c  \int_{\pa G\cap \widetilde{U_{r_0}}}  e^{-\tilde{\varphi}_s} dS \\
 &\leq M_0+(M_{1}-c_2 r^{-1}_0)\int_{\widetilde{U_{r_0}}}e^{-\tilde{\varphi}_s}dV_g \\
 &\leq M_0- \int_{\widetilde{U_{r_0}}}e^{-\tilde{\varphi}_s}dV_g
\end{aligned}
$$
the right hand side goes to $-\infty$ when $s \ra +\infty$,
this produce a contradiction to \eqref{eq:main-inequality-1-Riemannian-manifold},
thus $\pa G$ is $q$-convex.
\end{proof}

\bibliographystyle{amsplain}

\end{document}